\newtheorem{fed}{Definition}[section]
\newtheorem{teo}[fed]{Theorem}
\newtheorem{cor}[fed]{Corollary}
\newtheorem*{teo*}{Theorem}
\newtheorem{lem}[fed]{Lemma}
\newtheorem{pro}[fed]{Proposition}
\newtheorem{defi}[fed]{Definition}
\theoremstyle{definition}
\newtheorem{rem}[fed]{Remark}
\newtheorem*{teoD}{Douglas' theorem}
\def\bdem{\begin{proof}}
\def\edem{\renewcommand{\qed}{\hfill $\blacksquare$}
\end{proof}}
\date{}
\begin{document}

\title{Moore-Penrose inverse and partial orders on Hilbert space operators}
\author{Guillermina Fongi $^{a}$ $^*$, M.Celeste Gonzalez $^{b}$ $^{c}$ \footnote{The authors were supported in part by PICT 2017-0883 (FONCYT)}\\ 
\fontsize {9}{9} \selectfont$^a$ 
Centro Franco Argentino de Ciencias de la Informaci\'on y de Sistemas, CIFASIS-CONICET \\ \selectfont \fontsize {9}{9} \selectfont Ocampo y Esmeralda (2000)  Rosario, Argentina.
\\
\fontsize {9}{9} \selectfont$^b$ Instituto Argentino de Matem\'atica ``Alberto P. Calderón'', IAM-CONICET\\
\fontsize {9}{9} \selectfont Saavedra 15, Piso 3 (1083), Buenos Aires, Argentina.\\
\fontsize {9}{9} \selectfont $^c$ Instituto de Ciencias, Universidad Nacional de General Sarmiento.\\
\fontsize {9}{9} \selectfont{$^a$  gfongi@conicet.gov.ar, $^b$ celeste.gonzalez@conicet.gov.ar }}
\date{}
\maketitle

{\sl {AMS Classification:}} {15A09, 	06A06, 47A05}

\fontsize {10}{10} \selectfont {{\sl {Keywords: \ }}{Moore-Penrose inverse, operator orders, reverse order law}

\begin{abstract}
In this article we explore several aspects concerning to the Moore-Penrose inverse of a bounded linear operator. On the one hand, we study monotonicity properties of the Moore-Penrose inverse  with respect to the L\"owner, star, minus, sharp and diamond orders. On the other hand, we analyze the validity of the reverse order law, $B^\dagger A^\dagger=(AB)^\dagger$, under hypothesis of operator ranges and also under hypothesis of order operators. 
Finally, we study  the operator $B^\dagger A^\dagger$ as different weighted inverses of $AB$.
\end{abstract}

\maketitle

\section{Introduction}

In this article we study some aspects of the Moore-Penrose inverse ($^\dagger$) of a bounded linear operator on a Hilbert space  by means of different order relations. On the one hand, we analyze the behavior of the  Moore Penrose inverse for the L\"owner, star, minus, sharp and diamond orders. On the other hand we study the operator $B^\dagger A^\dagger$ as a generalized inverse of $AB$ in two different ways: firstly we study the reverse order law for the Moore Penrose inverse, $B^\dagger A^\dagger=(AB)^\dagger$, under conditions of orders between the operators $A$ and $B$; secondly we study the operator $B^\dagger A^\dagger$ as diferent weighted inverses of $AB$.

The Moore-Penrose inverse of a matrix  was defined by Moore \cite{moore1920reciprocal} in 1920 and independently, by Penrose \cite{MR69793} in 1955.  Later, this concept was extended to the context of infinite dimensional Hilbert space operators and since then, it has been extensively studied due to its numerous applications.   

Regarding the topics covered in this work,  a treatment on the behavior of the Moore Penrose inverse of a matrix with respect  to the L\"owner order can be found in \cite{MR1048801}. There, it is proved  that  the Moore Penrose inverse is decreasing on certain subsets of Hermitian matrices.  In this article, we study the monotonicity of the Moore-Penrose inverse with respect to the L\"owner order on the cone of positive operators defined on a Hilbert space. In addition, we show that if we consider the L\"owner and the star orders on the set of Hermitian operators  then the Moore Penrose inverse is monotonic increasing  with respect to these two orders; in this case we also show that the star order can not be changed by the minus or diamond order. In \cite{arias2021partial} it is shown that the Moore-Penrose inverse is  monotonic increasing with respect to the star order on the set of Hilbert space operators. Here, we prove that the monotonicity of the Moore-Penrose inverse holds for the minus and diamond orders,  under a condition that we call  \textit{range dagger sustractivity property}. We  also show that these results are not necessarily valid without this additional condition. 
 
 The reverse order law for the Moore Penrose inverse has been studied for a long time  and several characterizations for its validity (in the finite and infinite dimensional context) have been obtained in terms of range and factorization conditions, see for example  \cite{greville1966note, MR326424, MR2567299,MR651705}.  We present a sufficient condition for EP operators that allows to guarantee the reverse order law for the infinite dimensional case. We also show that this sufficient condition is weaker than one known in the matricial case. In the matricial context the reverse order law  was also studied under conditions of  space pre-order, star and minus orders, see for example  \cite{MR2653538,malik2007matrix}. We extend  certain results of these articles to the context of infinite dimensional Hilbert space operators. In particular, we prove that the reverse order law can be guaranteed under a star order condition. Also, we show that the star order hypothesis   can not be relaxed by a minus order condition.

We  also study the case that $B^\dagger A^\dagger$  is not necessarily the Moore-Penrose inverse of $AB$. 
In \cite{MR2187919, MR3085500}  different types of weighted inverses of an operator are  studied. In this article we study  when the operator $B^\dagger A^\dagger$ is a weighted inverse of $AB$ for certain particular weights. If the reverse order law holds  then $B^\dagger A^\dagger$ is a solution of the equation $ABX=P_{\mathcal{R}(A)}$ if and only if $\mathcal{R}(A)=\mathcal{R}(AB)$. Otherwise (i.e. if $B^\dagger A^\dagger$ is not necessarily the Moore-Penrose inverse of $AB$), it is interesting to analyze when $B^\dagger A^\dagger$ is a  best approximation solution (in some sense) of the problem $ABX-P_{\mathcal{R}(A)}$. In \cite{ConGirMae17} the following  problem is solved: given $T,S\in\mathcal{L}(\mathcal{H})$, analyze the existence of a minimum of the set $\{\|W^{1/2}(TX-S)\|_p: X\in\mathcal{L}(\mathcal{H})\}$ and its relationship with weighted inverses of $T$,  where $\|\cdot\|_p$ is a $p-$Schatten norm and $W$ a positive operator of $\mathcal{L}(\mathcal{H})$.  Inspired in this treatment, in the case the reverse order law does not necessarily hold we study when the operator $B^\dagger A^\dagger$ is the best solution of the problem $\|W^{1/2}(ABX-P_{\mathcal{R}(A)})\|_p$ for certain  positive weight $W$.

The contents of the article are the following: in Section \ref{Preliminares} we introduce  notations,  definitions and results that we will be used along the paper. In Section \ref{seccion-ordenes} we introduce the definitions of the minus, star, diamond and sharp orders and  certain relationships between them. In this section, our main contributions are Theorem \ref{minus equiv diamond} and Theorem \ref{minus y DSP entonces estrella}. In the first one,  we prove that the minus and the diamond orders are equivalent under the  range dagger sustractivity property.  In the second one, we show that the star order is equivalent to the the minus order if the dagger sustractivity property holds.
	
In Section \ref{seccion-monoticidad} we analyze the monotonicity of the Moore-Penrose map for the different classes of orders. Here the main results are: Theorems \ref{lowner-order} and  \ref{lowner y estrella}, where the monotonicity of the Moore Penrose inverse is studied with respect to  the star and the L\"owner  orders. Also in Proposition \ref{minusydagger} and Corollary \ref{diamondydagger}, the monotonicity is analyzed with respect to the minus and diamond orders under the range dagger sustractivity property. Finally, in Section \ref{seccion-ROL} we study  the operator $B^\dagger A^\dagger$ as a generalized inverse of $AB$. In the first part of this section we study the validity of the equality $(AB)^\dagger=B^\dagger A^\dagger$ under different conditions of range and order between the operators $A$ and $B$.  In Theorem \ref{AEP-reverseorderlaw}  we stablish a sufficient condition to get the reverse order law in terms of EP operators.   In Propositions \ref{ROL-AEP} and \ref{ROL-estrella} we show that the reverse order law is valid under a star order condition.

 The last part of this section is devoted to study  the operator $B^\dagger A^\dagger$ as a weighted generalized inverse of $AB$  and also  to study $B^\dagger A^\dagger$ as a solution of a  weighted operator least squares problem related to $AB$, when the weighted norm comes from a Schatten $p$-norm. The main results on these subjects are Theorems \ref{BdaggerAdagger-wgi} and \ref{BdaggerAdagger-pwproblem}.

\section{Preliminaries}\label{Preliminares}

Along the article $\mathcal{H}$ denotes a complex Hilbert space and $\mathcal{L}(\mathcal{H})$ is the algebra of bounded linear operators from $\mathcal{H}$ to $\mathcal{H}$. If $T\in\mathcal{L}(\mathcal{H})$ then $T^*$ denotes the adjoint of $T$, $\mathcal{R}(T)$ is the range of $T$ and $\mathcal{N}(T)$ is the nullspace of $T$. The subsets of  $\mathcal{L}(\mathcal{H})$ of Hermitian operators, positive operators, idempotent operators and orthogonal projections operators are denoted by $\mathcal{L}^h$, $\mathcal{L}^+$,  $\mathcal{Q}$ and $\mathcal{P}$, respectively. Given two subspaces $\mathcal{S}, \mathcal{T}$ of $\mathcal{H}$ then $\mathcal{S}\dot+\mathcal{T}$ and $\mathcal{S}\oplus\mathcal{T}$ denote the direct and orthogonal sums between $\mathcal{S}$ and $\mathcal{T}$, respectively. If $\mathcal{S}$, $\mathcal{T}$ are closed subspaces such that $\mathcal{H}=\mathcal{S}\dot+\mathcal{T}$, then the idempotent operator with range $\mathcal{S}$ and nullspace $\mathcal{T}$ is denoted by $Q_{\mathcal{S}//\mathcal{T}}$.  The orthogonal projection onto $\overline{\mathcal{R}(T)}$ is denoted by $P_T$.

Given $T\in\mathcal{L}(\mathcal{H})$, the Moore-Penrose inverse of $T$ is $T^\dagger: \mathcal{R}(T)\oplus\mathcal{R}(T)^\bot\longrightarrow \mathcal{H}$ such that $T^\dagger|_{\mathcal{R}(T)}=(T|_{\mathcal{N}(T)^\bot})^{-1}$ and $\mathcal{N}(T^\dagger)=\mathcal{R}(T)^\bot$, where $T|_{\mathcal{N}(T)^\bot}: \mathcal{N}(T)^{\bot}\longrightarrow \mathcal{R}(T)$. The Moore-Penrose inverse is in general a densely defined unbounded operator. It is well known that $T^\dagger\in\mathcal{L}(\mathcal{H})$ if and only if $\mathcal{R}(T)$ is closed; in such case,  $T^\dagger$ is characterized as the unique solution of the following four equations

\begin{equation}\label{moore-penrose-equations}
1. \ TXT=T,  \ \ \  2. \ XTX=X, \ \ \ 3. \ (TX)^*=TX, \ \ \  4. \ (XT)^*=XT.
\end{equation}

Equations (\ref{moore-penrose-equations}) characterize different kinds of pseudoinverses for  $T\in \mathcal{L}(\mathcal{H})$ with closed range. More precisely, $T'\in\mathcal{L}(\mathcal{H})$ is an inner inverse of $T$ if and only if it satisfies equation 1.,  $T'\in\mathcal{L}(\mathcal{H})$ is an outer inverse of $T$ if  and only if it satisfies equation 2., $T'\in\mathcal{L}(\mathcal{H})$ is a generalized inverse of $T$ if and only if it satisfies equations 1 and 2. For simplicity, we denote by $T[h,i]$, $T[h,i,,j]$, $T[h,i,j,k]$  the subsets of operators in $\mathcal{L}(\mathcal{H})$ which satisfy equations $h,i$; $h,i,j$ and $h,i,j,k$ for the values $h,i,j,k :=1,2,3,4$. Observe that, $\{T^\dagger\}=T[1,2,3,4]$.

The equations in (\ref{moore-penrose-equations}) were generalized in order to define different classes of pseudoinverses. The concept of generalized inverses with positive  weights  was introduced by Ben-Israel and Greville \cite{MR587113} in the context of finite dimensional Hilbert spaces.  Later it was  studied by Corach and Maestripieri \cite{MR2187919}  in infinite dimensional Hilbert spaces by means of the theory of compatibility.

\begin{defi}\label{MN-weighted}
Let $M,N\in \mathcal{L}^+$. Given $T\in \mathcal{L}(\mathcal{H})$ with closed range,  an operator $T' \in \mathcal{L}(\mathcal{H})$ is called an $M,N$-weighted generalized inverse of $T$ if $T'$ satisfies the following four equations:
$$
 \ TXT=T,  \ \ \  \ XTX=X, \ \ \  \ (MTX)^*=MTX, \ \ \   \ (NXT)^*=NXT.
$$
\end{defi}

The following concept was introduced by Rao and Mitra for finite dimensional spaces \cite{MR352148} and later it was extended by Corach, Fongi and Maestripieri to infinite dimensional Hilbert spaces \cite{MR3085500}.

\begin{defi}
	Let $M \in\mathcal{L}^+$. Given $T\in \mathcal{L}(\mathcal{H})$ with closed range, an operator $T'\in\mathcal{L}(\mathcal{H})$ is called an  $M$-inverse of $T$ if for each $y\in\mathcal{H}$, $T'y$ is an M-least square solution (M-LSS) of $Tx=y$, i.e.,
		$$
		\|y-BT'y\|_M\leq \|y-Tx\|_M,  \ \ x\in \mathcal{H}.
		$$
		\end{defi}

\begin{pro}\label{caracterizacion de M-inversas}\cite[Proposition 5.9 ]{MR3085500}
Consider a closed range operator $T\in \mathcal L(\mathcal H)$.	An operator $T'\in \mathcal L(\mathcal H)$ is an $M$-inverse of $T$ if and only if $T^*MTT'=T^*M$.
	\end{pro}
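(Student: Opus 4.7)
The plan is to reduce the characterization of M-least square solutions to a normal-equation-type identity, exactly as in the classical (unweighted) least squares problem, but carried out with the semi-inner product $\langle M\,\cdot\,,\cdot\rangle$ induced by $M\in\mathcal{L}^+$. The key algebraic identity is the orthogonal decomposition of the residual: for any $y,v\in\mathcal{H}$,
\begin{equation*}
\|y-T(T'y+v)\|_M^2 \;=\; \|y-TT'y\|_M^2 \;-\; 2\,\mathrm{Re}\,\langle T^*M(y-TT'y),v\rangle \;+\; \|Tv\|_M^2,
\end{equation*}
which is obtained just by expanding $\langle M\,\cdot\,,\cdot\rangle$. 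Both implications will fall out of this identity.

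For the ``$\Leftarrow$'' direction, assume $T^*MTT'=T^*M$. Then $T^*M(y-TT'y)=0$ for every $y\in\mathcal{H}$, so the cross term in the displayed identity vanishes. It follows that $\|y-Tx\|_M^2 = \|y-TT'y\|_M^2 + \|Tv\|_M^2 \ge \|y-TT'y\|_M^2$ for every $x=T'y+v\in\mathcal{H}$, which is exactly the M-LSS property of $T'y$.

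For the ``$\Rightarrow$'' direction, fix $y\in\mathcal{H}$ and $v\in\mathcal{H}$, and consider the real quadratic function $t\mapsto \|y-T(T'y+tv)\|_M^2$ on $\mathbb{R}$. By hypothesis $t=0$ is a minimum, so the linear coefficient must vanish: $\mathrm{Re}\,\langle T^*M(y-TT'y),v\rangle=0$. Running the same argument with $tv$ replaced by $itv$ kills the imaginary part, yielding $\langle T^*M(y-TT'y),v\rangle=0$ for every $v$. Hence $T^*M(y-TT'y)=0$, and since $y$ was arbitrary, $T^*MTT'=T^*M$.

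I do not foresee a serious obstacle. The one point to keep in mind is that $M$ is only assumed to be positive, so $\|\cdot\|_M$ is in general a seminorm rather than a norm; however, $\langle M\,\cdot\,,\cdot\rangle$ remains a positive semidefinite sesquilinear form, which is all that is needed for the expansion and for the conclusion that a nonnegative quadratic in $t$ with vanishing constant derivative at $0$ forces the linear coefficient to vanish. The closed-range assumption on $T$ does not actually enter the argument, but it is the standing hypothesis of the statement since $T^\dagger$ and its weighted analogues are only defined in that case.
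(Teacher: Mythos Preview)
Your argument is correct. The expansion of the $M$-seminorm, the vanishing of the cross term in the ``$\Leftarrow$'' direction, and the first-variation argument in the ``$\Rightarrow$'' direction (including the treatment of the imaginary part via $v\mapsto iv$) are all valid; your remark that the closed-range hypothesis is not actually used in this characterization is also accurate.

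As for comparison with the paper: the paper does not prove this proposition at all. It is stated with a direct citation to \cite[Proposition 5.9]{MR3085500} and used as a black box (for instance in Proposition~\ref{A inverse of AB} and Theorem~\ref{BdaggerAdagger-pwproblem}). Your self-contained derivation via the weighted normal equation is exactly the standard route and supplies what the paper omits.
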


On the other hand, given $T\in \mathcal{L}({\mathcal{H}})$ with closed range, a generalized inverse $T'$ of $T$ which satisfies the equation $TX = X T$ is called the \textit{group inverse} of $T$ and it is denoted by $T^\sharp$. It holds that if this operator exists then it is unique.  In \cite{MR229658},  it is shown necessary and sufficient conditions for the existence of this pseudoinverse:
\begin{teo}\label{group inverse conditions}
	Consider  $T\in \mathcal{L}(\mathcal{H})$, then the following statements are equivalent:
	\begin{enumerate}
		\item  $\mathcal H=\mathcal{R}(T)\dotplus \mathcal{N}(T)$,
		\item $\mathcal{R}(T^2)=\mathcal{R}(T)$, $\mathcal{N}(T^2)=\mathcal{N}(T)$,
		\item $T^\sharp $ exists.
	\end{enumerate}
\end{teo}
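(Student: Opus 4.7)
The plan is to establish the three-way equivalence via the cycle $(1)\Leftrightarrow(2)$ and $(1)\Leftrightarrow(3)$, with the intermediate condition (2) serving as a handy bridge in the construction of the group inverse. Throughout, I will keep in mind that the existence of $T^\sharp$ as a bounded operator requires $\mathcal{R}(T)$ to be closed, and the topological nature of the direct sum in condition (1) ensures that the associated projector $Q_{\mathcal{R}(T)//\mathcal{N}(T)}$ is bounded.

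For $(1)\Rightarrow(2)$, I would decompose an arbitrary $x \in \mathcal{H}$ as $x = r + n$ with $r \in \mathcal{R}(T)$, $n \in \mathcal{N}(T)$, so that $Tx = Tr \in \mathcal{R}(T^2)$; for kernels, if $T^2 x = 0$ then $Tx \in \mathcal{R}(T) \cap \mathcal{N}(T) = \{0\}$. The converse $(2)\Rightarrow(1)$ is obtained by writing, for each $x$, $Tx = T^2 y$ using $\mathcal{R}(T)=\mathcal{R}(T^2)$, and then decomposing $x = Ty + (x - Ty)$; the triviality of $\mathcal{R}(T) \cap \mathcal{N}(T)$ follows from $\mathcal{N}(T^2)=\mathcal{N}(T)$. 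For $(1) \Rightarrow (3)$, I will set $Q := Q_{\mathcal{R}(T)//\mathcal{N}(T)}$ and use (already established via (2)) the fact that $T$ maps $\mathcal{R}(T)$ bijectively onto itself; the open mapping theorem then yields a bounded inverse $S: \mathcal{R}(T) \to \mathcal{R}(T)$ of $T|_{\mathcal{R}(T)}$. I define $T^\sharp := SQ$ and verify by direct computation that $TT^\sharp T = T$, $T^\sharp T T^\sharp = T^\sharp$, and $TT^\sharp = Q = T^\sharp T$, which gives the required commutation.

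Finally, for $(3)\Rightarrow(1)$, I look at the operator $E := TT^\sharp = T^\sharp T$. Using $TT^\sharp T = T$ one checks immediately that $E^2 = E$, and further that $\mathcal{R}(E)=\mathcal{R}(T)$ (the inclusion $\subseteq$ is trivial, while $\supseteq$ follows by applying $TT^\sharp T = T$ to pre-images). For kernels, $Tx = 0$ obviously implies $T^\sharp T x = 0 = Ex$, and conversely if $Ex = T^\sharp T x = 0$, applying $T$ on the left and invoking $T T^\sharp T = T$ gives $Tx = 0$. Since $E$ is a bounded idempotent, $\mathcal{H} = \mathcal{R}(E) \dotplus \mathcal{N}(E) = \mathcal{R}(T) \dotplus \mathcal{N}(T)$.

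The step I expect to require the most care is $(1)\Rightarrow(3)$, not because of the algebra (the four identities for $T^\sharp = SQ$ reduce to routine manipulation with $Q$), but because of the infinite-dimensional analytic justifications: one must argue that the algebraic decomposition in (1) is automatically topological in the operator-theoretic context of the paper (so that $Q$ is bounded), and that $\mathcal{R}(T)$ is closed (so that $T|_{\mathcal{R}(T)}$ is a bounded bijection on a Banach space, permitting the invocation of the open mapping theorem). Everything else then reduces to bookkeeping with the defining identities of $T^\sharp$.
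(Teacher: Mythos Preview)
The paper does not prove this theorem; it is quoted verbatim from Robert \cite{MR229658} as a known result, so there is no ``paper's own proof'' to compare against. Your cycle $(1)\Leftrightarrow(2)$, $(1)\Leftrightarrow(3)$ --- with $(3)\Rightarrow(1)$ read off from the bounded idempotent $E=TT^\sharp=T^\sharp T$ and $(1)\Rightarrow(3)$ via the explicit construction $T^\sharp=SQ$ with $S=(T|_{\mathcal{R}(T)})^{-1}$ --- is precisely the classical argument in Robert's paper and in standard references on generalized inverses, and you have correctly flagged the only genuine analytic issues (closedness of $\mathcal{R}(T)$ so that the open mapping theorem applies, and boundedness of $Q$).
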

If $T$ admits a group inverse then it is well known that $T$ has  closed range, $\mathcal{R}(T^\sharp)=\mathcal{R}(T)$, $\mathcal{N}(T^\sharp)=\mathcal{N}(T)$ and $TT^\sharp=T^\sharp T=Q_{\mathcal{R}(T)//\mathcal{N}(T)}$. In addition, $T$ admits a group inverse if and only if $T^*$ admits a group inverse. On the other hand, if $T \in \mathcal{L}(\mathcal{\mathcal{H}})$  is an EP operator (i.e., $T$ has closed range and  $\mathcal{R}(T)=\mathcal{R}(T^*)$)  then it admits a group inverse.

The following result on range inclusion and factorization is due to Douglas \cite{MR0203464}:

\begin{teoD}
	Let $A,B \in \mathcal{L}(\mathcal{H})$. The following conditions are equivalent:
	\begin{enumerate}
		\item $\mathcal{R}(B)\subseteq \mathcal{R}(A)$;
		\item there exists a number $\lambda >0$ such that $BB^*\leq \lambda AA^*$;
		\item there exists $C\in\mathcal{L}(\mathcal{H})$ such that $AC=B$.
	\end{enumerate}
	In addition, if any of the above conditions holds then there exists a unique operator $X_r\in\mathcal{L}(\mathcal{H})$ such that $AX_r=B$ and $\mathcal{R}(X_r)\subseteq \mathcal{N}(A)^\bot$. Futhermore, $\|X_r\|=inf \{\lambda: \  BB^*\leq \lambda AA^*\}$. The operator $X_r$ is called the \it{Douglas' reduced solution} of $AX=B$.
\end{teoD}

\section{On operator orders}\label{seccion-ordenes}

We start this section by defining the operator orders that will be covered in this article. Also  some relationships between them will be analyzed. The space pre-order between operators in $\mathcal{L}(\mathcal{H})$ is denoted by $\overset{s}{\leq}$. The classical L\"owner order for operators in $\mathcal{L}^h$  is indicated by $\leq$.   Finally, 
the star, minus, diamond and sharp orders defined on $\mathcal{L}(\mathcal{H})$ are denoted by $\overset{*}\leq$, $\overset{-}\leq$, $\overset{\diamond}\leq$ and $\overset{\sharp}\leq$, respectively.

\begin{defi}\label{ordenes}
Let $A,B\in \mathcal{L}(\mathcal{H})$. Then:
\begin{enumerate}
\item $A\overset{s}\leq B$  if $\mathcal{R}(A)\subseteq \mathcal{R}(B)$ and $\mathcal{R}(A^*)\subseteq \mathcal{R}(B^*)$.
\item $A\overset{*}\leq B$ if $A^*A=A^*B$ and $AA^*=BA^*$.
\item $A\overset{\diamond}\leq B$ if $A\overset{s}\leq B$ and $AA^*A=AB^*A$.
\item $A\overset{-}\leq B$ if  there exist $P,Q\in \mathcal{Q}$ such that $A=PB$ and $A^*=QB^*$. The ranges of $P$ and $Q$ can be fixed as $\mathcal R(P)=\overline{\mathcal R(A)}$ and $\mathcal R(Q)=\overline{\mathcal R(A^*)}$.

\item If $A, B$ admit group inverses then,  $A\overset{\sharp}\leq B$ if  there exists $Q\in\mathcal{Q}$ with $\mathcal{R}(Q)=\mathcal{R}(A)$, $\mathcal{N}(Q)=\mathcal{N}(A)$ such that $A=QB$ and $A=BQ$.
\end{enumerate}
\end{defi}

We refer  the reader to \cite{Hart,Namboo, MR2742334,DC,  jose2015partial, arias2021partial,LT,MR3682701} for different treatments of these type of orders.

\begin{pro}\label{rem-orden-estrella} If $A,B\in \mathcal{L}(\mathcal{H})$  then the following characterizations hold: 
\begin{enumerate} 
\item\label{estrellaproyecciones} $A\overset{*}\leq B$ if and only if  there exist $P,Q\in \mathcal{P}$ such that $A=PB$ and $A^*=QB^*$. The orthogonal projections can be choosen such that $P=P_A$ and $Q=P_{A^*}$.
\item\label{estrellaMP} If  $A,B$ have closed ranges then $A\overset{*}\leq B$ if and only if $AA^\dagger=BA^\dagger=AB^\dagger$ and $A^\dagger A=A^\dagger B=B^\dagger A$. 
\item\label{rem-sharp}  If  $A,B$ are group invertible then $A\overset{\sharp}\leq B$ if and only if $A^2 =BA = AB$. \label{djsh}
\end{enumerate}
\end{pro}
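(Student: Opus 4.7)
The three items are independent but use related reductions.

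\emph{Part (1).} The defining identity $A^*A=A^*B$ is equivalent to $A^*(A-B)=0$, so $\mathcal{R}(A-B)\subseteq \mathcal{N}(A^*)=\mathcal{R}(A)^\perp$; applying $P_A$ to this inclusion yields $P_A(A-B)=0$, hence $A=P_AA=P_AB$. The analogous argument applied to the adjoint of $AA^*=BA^*$ (i.e., $A(A^*-B^*)=0$) gives $A^*=P_{A^*}B^*$. Conversely, if $A=PB$ and $A^*=QB^*$ with orthogonal projections $P,Q$, a direct computation using $P^2=P^*=P$ (and analogously for $Q$) gives $A^*A=B^*PB=A^*B$ and $AA^*=BQB^*=BA^*$.

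\emph{Part (2).} Since $A,B$ have closed range, $P_A=AA^\dagger$ and $P_{A^*}=A^\dagger A$, so Part~(1) yields the equivalent formulation $A=AA^\dagger B=BA^\dagger A$. For the forward direction, the identities $(A-B)A^*=0$ and $A^*(A-B)=0$ extend from $A^*$ to $A^\dagger$ because $\mathcal{R}(A^\dagger)=\mathcal{R}(A^*)$ and $\mathcal{N}(A^\dagger)=\mathcal{N}(A^*)$, yielding $BA^\dagger=AA^\dagger$ and $A^\dagger B=A^\dagger A$. The remaining two equalities $AB^\dagger=AA^\dagger$ and $B^\dagger A=A^\dagger A$ require the range inclusions $\mathcal{R}(A)\subseteq\mathcal{R}(B)$ (immediate from $A=BA^\dagger A$) and $\mathcal{R}(A^*)\subseteq\mathcal{R}(B^*)$ (from $A^*=B^*AA^\dagger$, obtained by adjointing $A=AA^\dagger B$); these give $P_AP_B=P_A$ and $P_{A^*}P_{B^*}=P_{A^*}$, whence multiplying $A=P_AB$ by $B^\dagger$ on the right produces $AB^\dagger=P_AP_B=P_A=AA^\dagger$, and the symmetric computation starting from $A^*=P_{A^*}B^*$ produces $B^\dagger A=A^\dagger A$ after taking adjoints. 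The converse is immediate: $A^\dagger A=A^\dagger B$ left-multiplied first by $A$ (giving $A=AA^\dagger B$) and then by $A^*$ (using $A^*AA^\dagger=A^*$) yields $A^*A=A^*B$; an analogous argument from $AA^\dagger=BA^\dagger$ yields $AA^*=BA^*$.

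\emph{Part (3).} Let $Q$ be the idempotent in the definition of $A\overset\sharp\leq B$. The conditions $\mathcal{R}(Q)=\mathcal{R}(A)$ and $\mathcal{N}(Q)=\mathcal{N}(A)$ translate to $QA=A$ and $AQ=A$. Write $A^2=(QB)(QB)=Q(BQ)B=Q\cdot AB$; since $AB=QB^2\in\mathcal{R}(Q)$ and $Q$ fixes its own range, $Q(AB)=AB$, so $A^2=AB$. The mirror computation $A^2=(BQ)(BQ)=B(QB)Q=(BA)Q$ combined with $BA=B^2Q$ (so $(BA)Q=BA$) gives $A^2=BA$. For the converse, take $Q:=AA^\sharp=A^\sharp A$; this is an idempotent with range $\mathcal{R}(A)$ and nullspace $\mathcal{N}(A)$, and $QB=A^\sharp(AB)=A^\sharp A^2=QA=A$, while $BQ=(BA)A^\sharp=A^2A^\sharp=AQ=A$.

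The only subtle step is extracting the range inclusions $\mathcal{R}(A)\subseteq\mathcal{R}(B)$ and $\mathcal{R}(A^*)\subseteq\mathcal{R}(B^*)$ in Part~(2), which are needed before the $B^\dagger$-identities can be derived from the $A^\dagger$-identities; the rest is routine algebraic manipulation.
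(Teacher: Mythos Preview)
Your argument is correct in all three parts. Note, however, that the paper does not actually prove this proposition: it simply records that item~1 is \cite[Proposition~2.3]{MR2742334} and that items~2 and~3 are \cite[Remark~5.5 and Theorem~3.4]{jose2015partial}. So there is no ``paper's own proof'' to compare against; you have supplied a self-contained argument where the authors chose to defer to the literature.

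A couple of minor remarks on presentation. In Part~(3), your forward computation works but is more circuitous than necessary: once you have $QA=A$ and $AQ=A$, the identities $AB=(AQ)B=A(QB)=A^2$ and $BA=B(QA)=(BQ)A=A^2$ follow in one line each, without needing to argue separately that $Q(AB)=AB$ via $AB=QB^2$. In Part~(2), it is worth remarking explicitly that the converse uses only two of the four hypothesis equalities ($A^\dagger A=A^\dagger B$ and $AA^\dagger=BA^\dagger$); this shows that the six-term condition in the statement is somewhat redundant. Otherwise the write-up is clean.
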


Characterization  \ref{estrellaproyecciones} of the above proposition can be found in \cite[Proposition 2.3]{MR2742334}. Characterizations \ref{estrellaMP}  and \ref{rem-sharp} are stated in \cite[ Remark 5.5 and Theorem 3.4]{jose2015partial}, respectively.

The minus, star, diamond and sharp orders can be also characterized by means of  operator range decompositions. The proof of items $1,2$ and $3$ of the following proposition can be found in  \cite[Theorem 3.3 and Corollary 3.9]{MR3682701} and \cite[Corollary 5.4]{arias2021partial}. The characterization given in item $4$ is new.

\begin{pro}\label{caracterizaciones ordenes}
Consider $A,B\in \mathcal{L}{(\mathcal{H})}$. Then,
\begin{enumerate}
	\item $A\overset{-}\leq B$ if and only if  $\mathcal{R}(B)=\mathcal{R}(A) \overset{.}{+}\mathcal{R}(B-A)$ and $\mathcal{R}(B^*)=\mathcal{R}(A^*) \overset{.}{+}\mathcal{R}(B^*-A^*)$.
	\item $A\overset{*}\leq B$ if and only if  $\mathcal{R}(B)=\mathcal{R}(A) \oplus\mathcal{R}(B-A)$ and $\mathcal{R}(B^*)=\mathcal{R}(A^*) \oplus\mathcal{R}(B^*-A^*)$.
	\item If $A, B$ have closed ranges then   $A\overset{\diamond}\leq B$ if and only if  $\mathcal{R}(B)=\mathcal{R}(A) \dot +\mathcal{R}((B^\dagger-A^\dagger)^*)$ and $\mathcal{R}(B^*)=\mathcal{R}(A^*) \dot +\mathcal{R}(B^\dagger-A^\dagger)$.
	\item If  $A,B$ are group invertible then $A\overset{\sharp}\leq B$  if and only if $\mathcal{R}(B)=\mathcal{R}(A) \dot +\mathcal{R}(B-A)$ where  $\mathcal{R}(B-A)\subseteq \mathcal{N}(A)$ and $\mathcal{R}(A)\subseteq \mathcal {N}(B-A)$.
\end{enumerate}
\end{pro}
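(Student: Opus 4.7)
The plan is to prove both implications by taking full advantage of the group-invertibility characterization already available from Proposition \ref{rem-orden-estrella}(\ref{rem-sharp}) and from the fact that $\mathcal{H}=\mathcal{R}(A)\dotplus\mathcal{N}(A)$ when $A^\sharp$ exists. Throughout I will write $Q=AA^\sharp=A^\sharp A=Q_{\mathcal R(A)//\mathcal N(A)}$, the idempotent appearing in Definition \ref{ordenes}(5).

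For the direct implication, I would start from $A=QB=BQ$ and decompose an arbitrary $Bx$ as
\[
Bx = QBx+(I-Q)Bx = Ax+(B-A)x,
\]
using $QB=A$ in the first summand and $B-A=B-QB=(I-Q)B$ in the second. This gives $\mathcal R(B)\subseteq \mathcal R(A)+\mathcal R(B-A)$; the reverse inclusion is immediate since $A=BQ$ and $B-A=B(I-Q)$ both factor through $B$. Next, from $(I-Q)B$ I read off $\mathcal R(B-A)\subseteq\mathcal R(I-Q)=\mathcal N(A)$, and from $B-A=B-BQ=B(I-Q)$ together with $A=BQ$ I get $(B-A)|_{\mathcal R(Q)}=0$, i.e.\ $\mathcal R(A)\subseteq\mathcal N(B-A)$. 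Finally, directness of the sum follows at once from $\mathcal R(A)\cap\mathcal R(B-A)\subseteq \mathcal R(A)\cap\mathcal N(A)=\{0\}$, which is exactly where group-invertibility of $A$ enters.

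For the converse, I would reverse the logic and aim to reach the multiplicative characterization $A^2=AB=BA$ of Proposition \ref{rem-orden-estrella}(\ref{rem-sharp}). The hypothesis $\mathcal R(B-A)\subseteq\mathcal N(A)$ gives $A(B-A)=0$, whence $AB=A^2$; the hypothesis $\mathcal R(A)\subseteq\mathcal N(B-A)$ gives $(B-A)A=0$, whence $BA=A^2$. Applying Proposition \ref{rem-orden-estrella}(\ref{rem-sharp}) yields $A\overset{\sharp}\leq B$. Alternatively, if one prefers to exhibit $Q$ directly, multiplying $BA=A^2$ on the right by $A^\sharp$ gives $BQ=A$ and multiplying $AB=A^2$ on the left by $A^\sharp$ gives $QB=A$, which is Definition \ref{ordenes}(5).

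I do not expect a serious obstacle: this is essentially a bookkeeping argument built on the idempotent $Q$ and on the decomposition $\mathcal H=\mathcal R(A)\dotplus\mathcal N(A)$. The only subtle point is that, in the converse, the range-sum decomposition $\mathcal R(B)=\mathcal R(A)\dotplus\mathcal R(B-A)$ is not actually needed to derive $A^2=AB=BA$; the two nullspace/range inclusions already suffice. It is therefore worth remarking (or at least internally checking) that the decomposition is automatic from those two inclusions plus group invertibility, so the three conditions in the statement are not independent; this keeps the equivalence clean and parallel to items $1$--$3$.
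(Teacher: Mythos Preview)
Your proof is correct and follows essentially the same route as the paper. The forward implication is identical (decompose via $Q$, read off $\mathcal R(B-A)\subseteq\mathcal N(A)$ from $B-A=(I-Q)B$ and $\mathcal R(A)\subseteq\mathcal N(B-A)$ from $B-A=B(I-Q)$, and use $\mathcal R(A)\cap\mathcal N(A)=\{0\}$ for directness); for the converse the paper goes straight to $Q(B-A)=0$ and $(B-A)Q=0$ to obtain $A=QB=BQ$, which is exactly your ``alternative'' argument, while your primary route through $A^2=AB=BA$ and Proposition~\ref{rem-orden-estrella}(\ref{rem-sharp}) is an equally short reformulation. Your closing remark that the direct-sum condition is redundant in the converse is correct and worth noting.
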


\begin{proof}
We only prove  item 4.  Suppose that $A\overset{\sharp}\leq B$. Then,  $A=QB=BQ$, where $Q=Q_{\mathcal{R}(A)//\mathcal{N}(A)} \in \mathcal Q$. Note that, in this case, $\mathcal{R}(A)\subseteq\mathcal{R}(B)$. Thus $\mathcal{R}(B)=\mathcal{R}(A)+\mathcal{R}(B-A)$. Now, since $A=QB$ then $B-A=(I-Q)B$, so that  $\mathcal{R}(B-A)\subseteq \mathcal{N}(A)$ and then $\mathcal{R}(A)\cap\mathcal{R}(B-A)=\{0\}$. Also,  $\mathcal{R}(A)\subseteq \mathcal{N}(B-A)$ because $B-A=B(I-Q)$. Conversely, since $A$ admits a group inverse then there exists $Q=Q_{\mathcal{R}(A)//\mathcal{N}(A)}\in \mathcal Q$. Then $Q(B-A)=0$ and $(B-A)Q=0$, because $\mathcal{R}(B-A)\subseteq \mathcal{N}(A)$ and $\mathcal{R}(A)\subseteq \mathcal {N}(B-A)$. Then $A=QB=BQ$, so that $A\overset{\sharp}\leq B$.   
\end{proof}

In what follows we study certain relationships between the mentioned partial orders. The next result collects some known facts.
\begin{pro}\label{relationships between orders} 
Let  $A,B\in \mathcal{L}{(\mathcal{H})}$. Then, the following assertions hold:
\begin{enumerate}
\item $A\overset{*}\leq B $ if and only if $ A\overset{-}\leq B$ and $AB^*, B^*A \in\mathcal{L}^h$.
\item $A\overset{*}\leq B $ if and only if $ A\overset{\diamond}\leq B$ and  $AB^*, B^*A\in\mathcal{L}^h$. 
\item If $A,B$ admit group inverses then $A\overset{\sharp}\leq B $ if and only if $ A\overset{-}\leq B$ and $AB=BA$.
\item\label{star-sharp} If $A$ is an EP operator  and $B$ is group invertible then $A\overset{*}\leq B $ if and only if $ A\overset{\sharp}\leq B$. 
\end{enumerate}
\end{pro}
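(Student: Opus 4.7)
The proof is a translation between the purely algebraic characterisations of the two orders, exploiting the fact that the EP condition identifies ranges and kernels of $A$ with those of $A^*$.

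My plan is to use the characterisations already recorded in the excerpt, namely that $A\overset{*}{\leq} B$ means $A^*A=A^*B$ and $AA^*=BA^*$, and that for group invertible operators, by Proposition~\ref{rem-orden-estrella}(\ref{rem-sharp}), $A\overset{\sharp}{\leq} B$ is equivalent to $A^2=BA=AB$. Since $A$ is EP, $A$ is automatically group invertible (as noted just before Douglas' theorem), so this characterisation applies to the pair $(A,B)$. The key input from the EP hypothesis is that $\mathcal{R}(A)=\mathcal{R}(A^*)$ and, dually, $\mathcal{N}(A)=\mathcal{N}(A^*)$, which lets me move an equation of the form $X A=0$ to $X A^*=0$ (and vice versa).

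For the implication $A\overset{*}{\leq}B\Rightarrow A\overset{\sharp}{\leq}B$, I start from $AA^*=BA^*$, which gives $(A-B)A^*=0$, i.e.\ $\mathcal{R}(A^*)\subseteq\mathcal{N}(A-B)$; by EP, $\mathcal{R}(A^*)=\mathcal{R}(A)$, whence $(A-B)A=0$, i.e.\ $A^2=BA$. Similarly $A^*A=A^*B$ gives $\mathcal{R}(A-B)\subseteq\mathcal{N}(A^*)$; by EP, $\mathcal{N}(A^*)=\mathcal{N}(A)$, so $A(A-B)=0$, i.e.\ $A^2=AB$. Thus $A^2=BA=AB$ and $A\overset{\sharp}{\leq}B$.

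For the converse, starting from $A^2=AB=BA$, I reverse the previous argument. From $A(B-A)=0$ I get $\mathcal{R}(B-A)\subseteq\mathcal{N}(A)=\mathcal{N}(A^*)$ (EP), hence $A^*(B-A)=0$, i.e.\ $A^*A=A^*B$. From $(B-A)A=0$ I get $\mathcal{R}(A)\subseteq\mathcal{N}(B-A)$, and using $\mathcal{R}(A^*)=\mathcal{R}(A)$ yields $(B-A)A^*=0$, i.e.\ $AA^*=BA^*$. Hence $A\overset{*}{\leq}B$. There is no real obstacle here; the only point worth flagging is that the hypothesis that $B$ be group invertible is used only to make the characterisation $A\overset{\sharp}{\leq}B\Leftrightarrow A^2=AB=BA$ available, while the symmetry between range and kernel conditions is carried entirely by the EP hypothesis on $A$.
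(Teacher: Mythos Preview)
Your argument for item~\ref{star-sharp} is correct. You work with the algebraic characterisations $A^*A=A^*B$, $AA^*=BA^*$ for the star order and $A^2=AB=BA$ for the sharp order (Proposition~\ref{rem-orden-estrella}(\ref{rem-sharp})), and translate between them via the EP identities $\mathcal{R}(A)=\mathcal{R}(A^*)$ and $\mathcal{N}(A)=\mathcal{N}(A^*)$; every step is justified.

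The paper takes a more structural route: it invokes the projection-based descriptions directly. By Proposition~\ref{rem-orden-estrella}(\ref{estrellaproyecciones}), $A\overset{*}{\leq}B$ means $A=P_AB$ and $A^*=P_{A^*}B^*$, while by Definition~\ref{ordenes}, $A\overset{\sharp}{\leq}B$ means $A=QB=BQ$ with $Q=Q_{\mathcal{R}(A)//\mathcal{N}(A)}$. Since $A$ is EP, one has $\mathcal{N}(A)=\mathcal{R}(A)^\perp$, so $Q=P_A=P_{A^*}$, and both order conditions collapse to the same pair of equations $A=P_AB=BP_A$. This is a one-line identification of projections rather than a kernel/range chase. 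Your approach is a bit more hands-on but has the virtue of making explicit exactly where each half of the EP hypothesis enters; the paper's approach is shorter but requires the reader to unpack why the three projections coincide. Both are equally elementary.
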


\begin{proof}
1.  It follows from  \cite[Theorem 5.10]{jose2015partial} since this result can be extended with the same proof for non necessarily closed range operators.

2. Suppose that  $ A\overset{\diamond}\leq B$ and $AB^*, B^*A\in\mathcal{L}^h$. Then $AA^*A=AB^*A$, and pre-multiplying by $A^\dagger$, we get that $A^*A=P_{A^*}B^*A=P_{A^*}A^*B=A^*B$. Similarly, it holds that $AA^*=AB^*$. Hence $ A\overset{*}\leq B$. The converse follows from Definition \ref{ordenes} and Proposition \ref{rem-orden-estrella}.

3. See \cite[Theorem 5.9]{jose2015partial}.

4. First note that $A$ is group invertible, by Proposition \ref{group inverse conditions}. Then the equivalence follows from Definition \ref{ordenes} and item \ref{estrellaproyecciones} of Proposition \ref{rem-orden-estrella}.
\end{proof}

\begin{cor}\label{equiv1,estrella, menos,diamante}
Consider $A,B\in  \mathcal{L}{(\mathcal{H})}$ such that $AB^*, B^*A\in\mathcal{L}^h$. Then $A\overset{*}\leq B $ if and only if $ A\overset{\diamond}\leq B$ if and only if  $ A\overset{-}\leq B$.
\end{cor}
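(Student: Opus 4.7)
The corollary follows almost immediately by combining items 1 and 2 of Proposition \ref{relationships between orders} under the standing hypothesis $AB^{*}, B^{*}A\in\mathcal{L}^{h}$.

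My plan is first to invoke Proposition \ref{relationships between orders}(1), which states that $A\overset{*}\leq B$ is equivalent to the conjunction of $A\overset{-}\leq B$ with $AB^{*}, B^{*}A\in\mathcal{L}^{h}$; since the Hermitian conditions are assumed, this directly yields $A\overset{*}\leq B \iff A\overset{-}\leq B$. Next I would apply Proposition \ref{relationships between orders}(2) in the same way: it states that $A\overset{*}\leq B$ is equivalent to $A\overset{\diamond}\leq B$ plus the same Hermitian hypotheses, giving $A\overset{*}\leq B \iff A\overset{\diamond}\leq B$. Chaining these two biconditionals produces the triple equivalence.

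There is essentially no obstacle here, because the corollary is a purely formal consequence of the preceding proposition; no new computation with the orders themselves is required. The only thing worth a brief remark is that the hypothesis $AB^{*}, B^{*}A\in\mathcal{L}^{h}$ is used on \emph{both} sides of each equivalence (i.e.\ it is needed to pass from the minus or diamond order up to the star order, and it is trivially inherited in the other direction). Thus the proof can be reduced to two short lines citing Proposition \ref{relationships between orders}, with no genuine calculation needed.
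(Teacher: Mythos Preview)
Your proposal is correct and matches the paper's approach: the corollary is stated immediately after Proposition \ref{relationships between orders} without a separate proof, precisely because it is the direct specialization of items 1 and 2 under the standing Hermitian hypothesis, exactly as you describe.
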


In the following result we present another condition under which the minus and diamond orders are equivalent. 

\begin{teo}\label{minus equiv diamond}
Let  $A, B\in\mathcal{L}(\mathcal{H})$ with closed ranges such that  $\mathcal{R}((B-A)^\dagger)=\mathcal{R}(B^\dagger-A^\dagger)$ and $\mathcal{R}((B^*-A^*)^\dagger)=\mathcal{R}((B^*)^\dagger-(A^*)^\dagger)$. Then
 $A\overset{-}\leq B$ 
	if and only if $A\overset{\diamond}\leq B$.
\end{teo}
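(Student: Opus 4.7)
The plan is to translate both partial orders into range direct-sum decompositions via Proposition~\ref{caracterizaciones ordenes} and to verify that the two pairs of decompositions coincide under the hypotheses. Concretely, Proposition~\ref{caracterizaciones ordenes}(1) gives that $A\overset{-}\leq B$ is equivalent to
\begin{equation*}
\mathcal{R}(B)=\mathcal{R}(A)\dot+\mathcal{R}(B-A) \qquad \text{and} \qquad \mathcal{R}(B^*)=\mathcal{R}(A^*)\dot+\mathcal{R}(B^*-A^*),
\end{equation*}
and Proposition~\ref{caracterizaciones ordenes}(3) gives that $A\overset{\diamond}\leq B$ is equivalent to
\begin{equation*}
\mathcal{R}(B)=\mathcal{R}(A)\dot+\mathcal{R}((B^\dagger-A^\dagger)^*) \qquad \text{and} \qquad \mathcal{R}(B^*)=\mathcal{R}(A^*)\dot+\mathcal{R}(B^\dagger-A^\dagger).
\end{equation*}
So the theorem reduces to establishing the two range identities
\begin{equation*}
\mathcal{R}(B^*-A^*)=\mathcal{R}(B^\dagger-A^\dagger) \qquad \text{and} \qquad \mathcal{R}(B-A)=\mathcal{R}((B^\dagger-A^\dagger)^*).
\end{equation*}

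For the first identity, I would use that the hypothesis $\mathcal{R}((B-A)^\dagger)=\mathcal{R}(B^\dagger-A^\dagger)$ is naturally read as asserting that $(B-A)^\dagger\in\mathcal L(\mathcal H)$, i.e.\ that $B-A$ has closed range, and then invoke the standard fact $\mathcal{R}(T^\dagger)=\mathcal{N}(T)^\bot=\mathcal{R}(T^*)$ for closed-range $T$ to obtain $\mathcal{R}((B-A)^\dagger)=\mathcal{R}(B^*-A^*)$; combined with the first hypothesis this yields the first identity. For the second identity, I would use the relation $(T^*)^\dagger=(T^\dagger)^*$ to rewrite $\mathcal{R}((B^*)^\dagger-(A^*)^\dagger)=\mathcal{R}((B^\dagger-A^\dagger)^*)$, apply the second hypothesis to get $\mathcal{R}((B^*-A^*)^\dagger)=\mathcal{R}((B^\dagger-A^\dagger)^*)$, and then use $\mathcal{R}((B^*-A^*)^\dagger)=\mathcal{R}(B-A)$ (by the same closed-range identity, since $B-A$ closed-range is equivalent to $B^*-A^*$ closed-range) to conclude. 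Once the two range equalities are in place, the decompositions characterising $\overset{-}\leq$ and $\overset{\diamond}\leq$ match term by term, and the equivalence of the two orders is immediate.

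The proof is otherwise routine symbol-chasing through Proposition~\ref{caracterizaciones ordenes}; the only delicate point is bookkeeping the interplay among $^*$, $^\dagger$, and $\mathcal{R}(\cdot)$, and making sure that the identity $\mathcal{R}(T^\dagger)=\mathcal{R}(T^*)$ is invoked only when the relevant range is closed. Note that the two hypotheses of the theorem are in fact designed precisely to sidestep the subtleties that would otherwise arise if $B-A$ or $B^*-A^*$ were not closed-range, since they supply the required range equalities directly.
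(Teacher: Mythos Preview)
Your overall strategy---reduce both orders to the range decompositions of Proposition~\ref{caracterizaciones ordenes} and match the complementary summands via the two range hypotheses---is exactly the paper's. The gap is in how you handle the closed-range question for $B-A$.

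You write that the hypothesis $\mathcal{R}((B-A)^\dagger)=\mathcal{R}(B^\dagger-A^\dagger)$ ``is naturally read as asserting that $(B-A)^\dagger\in\mathcal L(\mathcal H)$''. That inference is not valid in this paper's setting: $T^\dagger$ is defined for every $T\in\mathcal L(\mathcal H)$ as a (possibly unbounded) densely defined operator, and one always has $\mathcal{R}(T^\dagger)=\mathcal N(T)^\bot=\overline{\mathcal R(T^*)}$. So the hypothesis is a genuine range equality, not a disguised boundedness assumption, and it does \emph{not} by itself force $\mathcal R(B-A)$ to be closed. Without closedness your key identity $\mathcal R((B-A)^\dagger)=\mathcal R(B^*-A^*)$ is only $\mathcal R((B-A)^\dagger)=\overline{\mathcal R(B^*-A^*)}$, and the two decompositions no longer coincide on the nose.

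This matters asymmetrically. For the implication $A\overset{-}\leq B\Rightarrow A\overset{\diamond}\leq B$, one can \emph{prove} that $\mathcal R(B-A)$ is closed: from $\mathcal R(B)=\mathcal R(A)\dot+\mathcal R(B-A)$ with $\mathcal R(A),\mathcal R(B)$ closed, this is \cite[Theorem~2.3]{fillmore1971operator}. With that in hand your identities go through. For the converse, however, starting from $A\overset{\diamond}\leq B$ gives no direct control over $\mathcal R(B-A)$. The paper does not try to recover Proposition~\ref{caracterizaciones ordenes}(1) literally here; it instead runs the chain
\[
\mathcal R((B^*-A^*)^\dagger)=\mathcal N((B-A)^*)^\bot=\overline{\mathcal R(B-A)},
\]
obtains $\mathcal R(B)=\mathcal R(A)\dot+\overline{\mathcal R(B-A)}$ and $\mathcal R(B^*)=\mathcal R(A^*)\dot+\overline{\mathcal R(B^*-A^*)}$, and then invokes \cite[Proposition~3.2]{MR3682701}, a version of the minus-order characterisation stated with closures. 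Your proposal is missing exactly this last move.
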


\begin{proof}
 Suppose   $A\overset{-}\leq B$, then $\mathcal{R}(B)=\mathcal{R}(A) \overset{.}{+}\mathcal{R}(B-A)$ and $\mathcal{R}(B^*)=\mathcal{R}(A^*) \overset{.}{+}\mathcal{R}(B^*-A^*)$. Since $A$ and $B$ have closed range then $R(B-A)$ is closed, see  \cite[Theorem 2.3]{fillmore1971operator}. 
	Now, observe that
	\begin{eqnarray}
	\mathcal{R}(B)&=&\mathcal{R}(A) \overset{.}{+}\mathcal{R}(B-A)=\mathcal{R}(A) \overset{.}{+}\mathcal{N}((B-A)^*)^\bot = \mathcal{R}(A) \overset{.}{+}\mathcal{R}(((B-A)^*)^\dagger) \nonumber\\
	&=&\mathcal{R}(A) \overset{.}{+}\mathcal{R}((B^\dagger-A^\dagger)^*),\nonumber
	\end{eqnarray}
	and similarly, $ \mathcal{R}(B^*)=\mathcal{R}(A^*) \overset{.}{+}\mathcal{R}(B^\dagger-A^\dagger)$.
	Therefore, by Proposition \ref{caracterizaciones ordenes} it holds that $A\overset{\diamond}\leq B$. Conversely, if $A\overset{\diamond}\leq B$ then 
	\begin{eqnarray}
	\mathcal{R}(B)&=&\mathcal{R}(A) \overset{.}{+}\mathcal{R}((B^*)^\dagger-(A^*)^\dagger)=\mathcal{R}(A) \overset{.}{+}\mathcal{R}((B^*-A^*)^\dagger) = \mathcal{R}(A) \overset{.}{+}\mathcal{N}((B-A)^*)^\bot \nonumber\\
	&=&\mathcal{R}(A) \overset{.}{+}\overline{\mathcal{R}(B-A)},\nonumber
	\end{eqnarray}
	and similarly, $ \mathcal{R}(B^*)=\mathcal{R}(A^*) \overset{.}{+}\overline{\mathcal{R}(B^*-A^*)}.$
Then,  $A\overset{-}\leq B$, see \cite[Proposition 3.2]{MR3682701}
\end{proof}

\medskip

	If $A, B\in \mathcal{L}(\mathcal{H})$ are such that $B$ has closed  range and  $A\overset{*}\leq B$ then  it holds that
	$$
	A\overset{-}\leq B \textrm{ and } (B-A)^\dagger=B^\dagger-A^\dagger,
	$$
	see \cite[Theorem 6.1]{jose2015partial} and \cite[Proposition 3.6 and Corollary 4.10]{MR3682701}. The next result proves that converse of the above statement  holds.	
	\begin{teo}\label{minus y DSP entonces estrella}Consider  $A, B\in \mathcal{L}(\mathcal{H})$  such that $A$ and $B$ have closed  range. Then  $A\overset{*}\leq B$ then  if and only if
	$
	A\overset{-}\leq B \textrm{ and } (B-A)^\dagger=B^\dagger-A^\dagger,
	$
\end{teo}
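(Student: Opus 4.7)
The forward direction $(\Rightarrow)$ is established in the references cited just before the statement. For the converse, assume $A\overset{-}\leq B$ (with both of closed range) and $(B-A)^\dagger=B^\dagger-A^\dagger$; set $C=B-A$, which has closed range by \cite[Theorem 2.3]{fillmore1971operator}, as in the proof of Theorem \ref{minus equiv diamond}. By Proposition \ref{caracterizaciones ordenes}(2), $A\overset{*}\leq B$ is equivalent to orthogonality of the direct-sum decompositions $\mathcal{R}(B)=\mathcal{R}(A)\dot+\mathcal{R}(C)$ and $\mathcal{R}(B^*)=\mathcal{R}(A^*)\dot+\mathcal{R}(C^*)$ already supplied by the minus order, so it suffices to show $\mathcal{R}(A)\perp\mathcal{R}(C)$ and $\mathcal{R}(A^*)\perp\mathcal{R}(C^*)$.

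The plan is to work in block-matrix form. Using the orthogonal decompositions $\mathcal{H}=\mathcal{R}(B^*)\oplus\mathcal{N}(B)$ in the domain and $\mathcal{H}=\mathcal{R}(B)\oplus\mathcal{R}(B)^\perp$ in the codomain, the range containments $\mathcal{R}(A)\subseteq\mathcal{R}(B)$ and $\mathcal{R}(A^*)\subseteq\mathcal{R}(B^*)$ force $A$, $B$, $C$ and their Moore--Penrose inverses to be supported on the $(1,1)$ block. This reduces the problem to the case where $B$ acts as a bijection $T\colon\mathcal{H}_1\to\mathcal{H}_2$ between Hilbert spaces, $S:=A$ has closed range, $(T-S)^\dagger=T^{-1}-S^\dagger$, and $ST^{-1}S=S$. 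Decomposing further with $\mathcal{H}_1=\mathcal{R}(S^*)\oplus\mathcal{N}(S)$ and $\mathcal{H}_2=\mathcal{R}(S)\oplus\mathcal{R}(S)^\perp$, one writes
\[S=\begin{pmatrix}S_0 & 0\\ 0 & 0\end{pmatrix},\qquad T=\begin{pmatrix}T_{11} & T_{12}\\ T_{21} & T_{22}\end{pmatrix},\]
with $S_0$ a bijection and $T$ bijective. The minus-order identity pins down $(T^{-1})_{11}=S_0^{-1}$, so $C^\dagger=T^{-1}-S^\dagger$ has vanishing $(1,1)$ block.

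The heart of the argument is to force $T_{12}=T_{21}=0$ from the four Moore--Penrose equations for $C=T-S$. From the consistency relations $T_{22}(T^{-1})_{21}=-T_{21}S_0^{-1}$ and $T_{11}(T^{-1})_{12}+T_{12}(T^{-1})_{22}=0$ (coming from $TT^{-1}=I$), combined with the self-adjointness of $CC^\dagger$ at the $(1,2)$ entry, one derives the clean formula
\[(T^{-1})_{12}=(S_0^*S_0)^{-1}T_{21}^*.\]
The first Moore--Penrose equation $CC^\dagger C=C$, at the $(2,1)$ block, simplifies (again using $T_{22}(T^{-1})_{21}=-T_{21}S_0^{-1}$ and $(T^{-1}T)_{(1,1)}=I$) to $T_{21}(T^{-1})_{12}T_{21}=0$; substituting the formula above produces the positive-quadratic identity
\[T_{21}(S_0^*S_0)^{-1}T_{21}^{*}\,T_{21}=0.\]
Writing $(S_0^*S_0)^{-1}=L^{2}$ with $L$ positive and invertible, this reads $(T_{21}L)(T_{21}L)^{*}\,T_{21}=0$; since the positive operator $(T_{21}L)(T_{21}L)^{*}$ has null space $\mathcal{R}(T_{21}L)^\perp=\mathcal{R}(T_{21})^\perp$ (as $L$ is bijective), the inclusion $\mathcal{R}(T_{21})\subseteq\mathcal{R}(T_{21})^\perp$ forces $T_{21}=0$. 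The dual computation gives $T_{12}=0$, and then $(T^{-1}T)_{(1,1)}=I$ yields $T_{11}=S_0$. With $T$ now block-diagonal, $C=\begin{pmatrix}0 & 0\\ 0 & T_{22}\end{pmatrix}$, so $\mathcal{R}(C)\perp\mathcal{R}(S)$ and $\mathcal{R}(C^*)\perp\mathcal{R}(S^*)$, as required.

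The main obstacle is the block-matrix bookkeeping: in particular, combining the self-adjointness of $CC^\dagger$ with the $TT^{-1}=T^{-1}T=I$ relations to extract the explicit formula $(T^{-1})_{12}=(S_0^*S_0)^{-1}T_{21}^*$, and recognising that the first Moore--Penrose equation then yields a positive-quadratic expression in $T_{21}$ weighted by the invertible Gramian $S_0^*S_0$. Once this expression is isolated, positivity closes the proof cleanly.
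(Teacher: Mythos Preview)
Your argument is correct, but it follows a genuinely different route from the paper's proof. The paper works purely with operator ranges: it first shows that $\mathcal{R}(A)^\perp\cap\mathcal{R}(B)\subseteq\mathcal{R}(B-A)$ by writing $u=BB^\dagger u=B(B-A)^\dagger u$ for $u\in\mathcal{R}(A)^\perp\cap\mathcal{R}(B)$ (using $A^\dagger u=0$ and the dagger-subtractivity), projecting onto $\mathcal{R}(A)^\perp$ and using a norm inequality to conclude $u\in\mathcal{R}(B-A)$; then it proves $\mathcal{R}(B-A)\subseteq\mathcal{R}(A)^\perp$ by decomposing an element of $\mathcal{R}(B-A)$ along $\mathcal{S}=\mathcal{R}(A)^\perp\cap\mathcal{R}(B)$ and $\mathcal{S}^\perp$ and using $\mathcal{R}(A)\cap\mathcal{R}(B-A)=\{0\}$. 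Your approach instead reduces to the case $B$ invertible, passes to $2\times2$ blocks relative to $\mathcal{R}(A)$ and $\mathcal{R}(A^*)$, and extracts the vanishing of the off-diagonal blocks of $B$ from the Moore--Penrose equations for $C=B-A$ via the explicit Gramian-weighted identity $T_{21}(S_0^*S_0)^{-1}T_{21}^*T_{21}=0$.

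Your block-matrix argument is more computational but has the advantage of being completely systematic: once the reduction to invertible $B$ is made and the key identity $(T^{-1})_{11}=S_0^{-1}$ is secured from $AB^\dagger A=A$, the rest is mechanical algebra that would transfer verbatim to any $C^*$-algebraic setting with a Moore--Penrose inverse. The paper's range-theoretic argument is shorter and more geometric, but the norm step (deducing $u\in\mathcal{R}(B-A)$ from $u=P_{\mathcal{R}(A)^\perp}P_{\mathcal{R}(B-A)}u$) is a little delicate. One small remark on your write-up: after you obtain $T_{21}=0$ you do not actually need a separate ``dual computation'' for $T_{12}$; the relation $(T^{-1})_{12}=(S_0^*S_0)^{-1}T_{21}^*$ already gives $(T^{-1})_{12}=0$, whence $T_{11}=S_0$ from $(T^{-1}T)_{11}=I$, and then invertibility of $T$ forces $T_{22}$ invertible and $T_{12}=0$ directly from $(TT^{-1})_{12}=0$.
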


\begin{proof}
		Suppose $A\overset{-}\leq B$ and $(B-A)^\dagger=B^\dagger-A^\dagger$. Since $\mathcal R(B)$ is closed and $A\overset{-}\leq B$, then by \cite[Corollary 3.16 ]{MR3682701} it follows that  $\mathcal R(B-A) $ are also closed.  Since  $\mathcal R(B)=\mathcal R(A)\dot{+} \mathcal R(B-A)$ then it holds that $\mathcal R(A)^\perp \cap \mathcal{R}(B)\neq \{0\}$. In fact, suppose that  $\mathcal R(A)^\perp \cap \mathcal{R}(B)=\{0\}$. Then $\mathcal{R}(A)\dot+\mathcal{R}(B)^\bot=\mathcal{H}$ because  $\mathcal{R}(A)\dot+\mathcal{R}(B)^\bot$ is dense and closed. In addition, $\mathcal{R}(B)=\mathcal{R}(A)$. In fact, it is clear that $\mathcal{R}(A)\subseteq \mathcal{R}(B)$. Now, let $Bx\in\mathcal{R}(B)$, for some $x\in\mathcal{H}$. Then $Bx=Az+w$, for some $z\in\mathcal{H}$ and $w\in\mathcal{R}(B)^\bot$ and so that $Bx-Az\in\mathcal{R}(B)\cap\mathcal{R}(B)^\bot=\{0\}$. Therefore, $Bx\in\mathcal{R}(A)$ and thus $\mathcal{R}(B)=\mathcal{R}(A)$. Now, since $\mathcal R(B)=\mathcal R(A)\dot{+} \mathcal R(B-A)$ so that $\mathcal{R}(B-A)=\{0\}$. Then $B=A$ which is an absurd. In consequence, $\mathcal R(A)^\perp \cap \mathcal{R}(B)\neq \{0\}$. 
		
		Now, we prove that  $ \mathcal R(A)^\perp \cap \mathcal{R}(B)\subseteq \mathcal R(B-A)$: consider $u\in \mathcal R(A)^\perp \cap \mathcal{R}(B),$ with $u\neq 0$. Then $ u=P_{\mathcal{R}(B)}u=BB^\dagger u=B((B-A)^\dagger+A^\dagger)u=B((B-A)^\dagger u=B((B-A)^\dagger u -A(B-A)^\dagger u+ A(B-A)^\dagger u)= (B-A)(B-A)^\dagger u+ A((B-A)^\dagger u$, where the fourth equality follows because $u\in \mathcal R(A)^\perp=\mathcal N(A^\dagger)$. Therefore $u=P_{\mathcal R(A)^\perp}u=P_{\mathcal R(A)^\perp}P_{\mathcal R(B-A)}u$, so that $u\in \mathcal R(B-A)$. In fact,  first note that $\|u\|\leq \|P_{\mathcal{R}(B-A)} u\|$. Now, since $u=u_1+u_2$, where $u_1\in\mathcal{R}(B-A)$ and $u_2\in\mathcal{R}(B-A)^\bot$. Then $P_{\mathcal{R}(B-A)}u=u_1$. If $u\not\in\mathcal{R}(B-A)$ then $\|P_{R(B-A)u}\|<\|u\|$ which is a contradiction. Therefore, $ \mathcal R(A)^\perp \cap \mathcal{R}(B)\subseteq \mathcal R(B-A)$. 
		
		Finally, we will show that $\mathcal R(B-A)\subseteq \mathcal R(A)^\perp$. Consider $(B-A)y\in \mathcal R(B-A)$. Since $\mathcal S=\mathcal R(A)^\perp \cap \mathcal{R}(B)\subseteq \mathcal R(B-A)$, then $(B-A)y-P_{\mathcal S}(B-A)y=(I-P_{\mathcal S})(B-A)y\in  \mathcal  R(B-A)\cap \mathcal S^\perp \subseteq \mathcal R(B)\cap (\mathcal R(A)^\perp\cap \mathcal R(B))^\perp=\mathcal R(A)$.  Therefore, $(B-A)y-P_{\mathcal S}(B-A)\in  \mathcal R(A)\cap \mathcal R(B-A)=\{0\}$, so that $(B-A)y=P_{\mathcal S}(B-A)y\in  \mathcal R(A)^\perp$. Then, it holds that $\mathcal R(B-A)\subseteq \mathcal R(A)^\perp$.  Hence  $\mathcal R(B)=\mathcal R(A)\oplus \mathcal R(B-A)$. Similarly it can be proved that  $\mathcal R(B^*)=\mathcal R(A^*)\oplus \mathcal R(B^*-A^*)$. Hence, by Proposition \ref{caracterizaciones ordenes}, it holds that $A\overset{*}\leq B$. For the converse see \cite[Theorem 6.1]{jose2015partial} or   \cite[Proposition 3.6 and Corollary 4.10]{MR3682701}.
\end{proof}

\begin{cor}\label{equiv2,estrella, menos,diamante}
Consider $A,B\in  \mathcal{L}{(\mathcal{H})}$ with closed ranges such that  $(B-A)^\dagger=B^\dagger-A^\dagger$. Then $A\overset{*}\leq B $ if and only if $ A\overset{\diamond}\leq B$ if and only if  $ A\overset{-}\leq B$.
\end{cor}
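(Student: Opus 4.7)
The plan is to derive this corollary as a direct consequence of the two theorems just proved, Theorem \ref{minus equiv diamond} and Theorem \ref{minus y DSP entonces estrella}, by verifying that the hypothesis $(B-A)^\dagger = B^\dagger - A^\dagger$ implies both of the range conditions demanded in Theorem \ref{minus equiv diamond}.

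First, I would observe that the first range identity, $\mathcal{R}((B-A)^\dagger)=\mathcal{R}(B^\dagger-A^\dagger)$, is trivial under our hypothesis, since the two operators coincide. For the second identity, $\mathcal{R}((B^*-A^*)^\dagger)=\mathcal{R}((B^*)^\dagger-(A^*)^\dagger)$, I would take adjoints in the hypothesis and use the standard rules $T^{*\dagger}=T^{\dagger *}$ (valid for closed range operators), together with linearity of the adjoint. This yields
\[
(B^*-A^*)^\dagger = \bigl((B-A)^\dagger\bigr)^* = (B^\dagger-A^\dagger)^* = (B^*)^\dagger - (A^*)^\dagger,
\]
so in fact the adjoint identity holds as an operator equality and the second range condition is automatic.

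Second, with both range conditions verified, Theorem \ref{minus equiv diamond} applies directly and gives $A\overset{-}\leq B \iff A\overset{\diamond}\leq B$.

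Third, I would invoke Theorem \ref{minus y DSP entonces estrella} to close the loop: for the implication $A\overset{-}\leq B \Rightarrow A\overset{*}\leq B$, the hypothesis $(B-A)^\dagger=B^\dagger-A^\dagger$ together with $A\overset{-}\leq B$ is exactly the sufficient condition there; and for the converse $A\overset{*}\leq B \Rightarrow A\overset{-}\leq B$, this is the easy direction recorded in the same theorem (and established in the references \cite{jose2015partial,MR3682701}). This yields the equivalence $A\overset{*}\leq B \iff A\overset{-}\leq B$, which together with the previous step completes the chain of three equivalences. There is no genuine obstacle here; the only small care needed is the adjoint computation in the first step, which is routine.
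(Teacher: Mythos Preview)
Your proposal is correct and is exactly the argument the paper has in mind: the corollary is stated without proof immediately after Theorems \ref{minus equiv diamond} and \ref{minus y DSP entonces estrella}, and follows from them precisely as you describe. The only point worth making explicit is that the hypothesis $(B-A)^\dagger=B^\dagger-A^\dagger$, with the right-hand side bounded, forces $(B-A)^\dagger\in\mathcal{L}(\mathcal{H})$ and hence $\mathcal{R}(B-A)$ closed, so the adjoint rule $(T^*)^\dagger=(T^\dagger)^*$ you invoke is indeed available for $T=B-A$.
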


\begin{rem}
It is worth noting that with the same proof as that of  Theorem \ref{minus y DSP entonces estrella}, a more general result follows: consider  $A, B\in \mathcal{L}(\mathcal{H})$ such that $B$ has closed range. Then $A\overset{*}\leq B$ if and only if $A\overset{-}\leq B$ and     $(A+B)(A^\dagger +B^\dagger)=P_{\mathcal{R}(A+B)}$. 
\end{rem}

\begin{rem} \label{relacion entre hipotesis} Recall that under the hypothesis $AB^*, B^*A\in\mathcal{L}^h$ or 
$(B-A)^\dagger=B^\dagger-A^\dagger$ we showed that the star, diamond and minus orders are equivalent (see Corollary \ref{equiv1,estrella, menos,diamante} and Corollary \ref{equiv2,estrella, menos,diamante}). Now, let us see that conditions $AB^*, B^*A\in\mathcal{L}^h$ and 
$(B-A)^\dagger=B^\dagger-A^\dagger$ are not related.
In fact, consider $\mathcal{H}=\mathbb{R}^2$ and $A,B\in\mathcal{L}(\mathcal{H})$ given by  $A=\left(\begin{array}{cc}
1& 0 \\
1 & 0
\end{array}\right)$ and $B=\left(\begin{array}{cc}
1& 1 \\
1 & -1
\end{array}\right)$.  Then $(B-A)^\dagger= \left(\begin{array}{cc}
0& 1 \\
0 & -1
\end{array}\right)^\dagger= \left(\begin{array}{cc}
0& 0 \\
1/2 & -1/2
\end{array}\right)$ and $B^\dagger-A^\dagger= \left(\begin{array}{cc}
1/2& 1/2 \\
1/2 & -1/2
\end{array}\right)- \left(\begin{array}{cc}
1/2& 1/2 \\
0 & 0
\end{array}\right)=(B-A)^\dagger$. However, $B^*A\not\in\mathcal{L}^h$. On the other hand, if $A=\left(\begin{array}{cc}
2& 0 \\
0 & 0
\end{array}\right)$ and $B=\left(\begin{array}{cc}
1& 0 \\
0 & 2
\end{array}\right)$ it holds that $AB^*, B^*A\in\mathcal{L}^h$ but $(B-A)^\dagger= \left(\begin{array}{cc}
-1& 0 \\
0 & 1/2
\end{array}\right)\neq \left(\begin{array}{cc}
1/2& 0 \\
0 & 1/2
\end{array}\right) = B^\dagger - A^\dagger$.
\end{rem}

Given  $A,B\in \mathcal{L}(\mathcal H)$, condition $(B-A)^\dagger=B^\dagger-A^\dagger$  is usually known as \textit{dagger substractivity property}.
We say that $A$ and $B$ satistfy the \textit{range dagger substractivity property}  if 
$$
\mathcal{R}((B-A)^\dagger)=\mathcal{R}(B^\dagger-A^\dagger) \ \textrm{and} \  \mathcal{R}((B^*-A^*)^\dagger)=\mathcal{R}((B^*)^\dagger-(A^*)^\dagger).
$$ Observe that the range dagger sustractivity property does not imply dagger sustractivity property, in general. In fact, let $A=\left(\begin{array}{cc}
2& 0 \\
0 & 0
\end{array}\right)$ and $B=\left(\begin{array}{cc}
1& 0 \\
0 & 2
\end{array}\right)$ in $\mathcal{L}^+$ as in Remark \ref{relacion entre hipotesis}. Then, $\mathcal{R}((B-A)^\dagger)=\mathcal{R}(B^\dagger-A^\dagger)$ but $(B-A)^\dagger \neq B^\dagger - A^\dagger$. More generally, given two diagonal operators $A$ and $B$ in $\mathcal{L}(\mathcal{H})$ (i.e. $Ax=\sum \lambda_jx_je_j$ and $Bx=\sum \mu_jx_je_j$, with $x=\sum x_je_j$ and $\{e_j\}$ an orthonormal basis of $\mathcal{H}$) then $A$ and $B$ satisfy the range dagger sustractivity property but they do not  necessarily satisfy the dagger sustractivity property.

\medskip

We finish this section by giving a characterization of the range dagger sustractivity property.

\begin{pro}
	Consider $A, B\in\mathcal{L}(\mathcal{H})$ with closed ranges such that  $\mathcal{R}(B-A)$ is closed. Then   $\mathcal{R}((B-A)^\dagger)= \mathcal{R}(B^\dagger-A^\dagger)$ and
	$\mathcal{R}((B^*-A^*)^\dagger)=\mathcal{R}((B^*)^\dagger-(A^*)^\dagger)$ if and only if there exist $X,Y\in\mathcal L(\mathcal{H})$ invertible operators such that $$
	(B^\dagger  -A^\dagger) X=B^*-A^*  
	\textrm{ and }
	((B^*)^\dagger  -(A^*)^\dagger) Y=B-A.	$$
\end{pro}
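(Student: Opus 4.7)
The plan is to rewrite the two range hypotheses in terms of $B^\dagger-A^\dagger$ and its adjoint, and then build $X$ and $Y$ as Douglas-type solutions plus a correction supported on the relevant nullspace.

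Using that $\mathcal R(T^\dagger)=\mathcal R(T^*)$ for any closed-range $T$, I would first translate the hypotheses as
\begin{equation*}
\mathcal R(B^*-A^*)=\mathcal R(B^\dagger-A^\dagger)\qquad\text{and}\qquad \mathcal R(B-A)=\mathcal R\bigl((B^\dagger-A^\dagger)^*\bigr).
\end{equation*}
Since $\mathcal R(B-A)$ is closed, so is $\mathcal R(B^*-A^*)$; hence $B^\dagger-A^\dagger$ has closed range, its Moore--Penrose inverse lives in $\mathcal L(\mathcal H)$, and taking orthogonal complements in the second equality yields the crucial identity $\mathcal N(B^\dagger-A^\dagger)=\mathcal N(B^*-A^*)$. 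The $(\Leftarrow)$ direction is then automatic: if $(B^\dagger-A^\dagger)X=B^*-A^*$ with $X$ invertible, surjectivity of $X$ gives $\mathcal R(B^*-A^*)=\mathcal R(B^\dagger-A^\dagger)$, which is the first translated equality; the second follows analogously from $Y$.

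For $(\Rightarrow)$, I would set
\begin{equation*}
X:=(B^\dagger-A^\dagger)^\dagger(B^*-A^*)+P_{\mathcal N(B^\dagger-A^\dagger)},\qquad Y:=\bigl((B^*)^\dagger-(A^*)^\dagger\bigr)^\dagger(B-A)+P_{\mathcal N((B^*)^\dagger-(A^*)^\dagger)}.
\end{equation*}
The equation $(B^\dagger-A^\dagger)X=B^*-A^*$ is routine: the first summand produces $P_{\mathcal R(B^\dagger-A^\dagger)}(B^*-A^*)=B^*-A^*$ thanks to the first translated equality, while $B^\dagger-A^\dagger$ annihilates $P_{\mathcal N(B^\dagger-A^\dagger)}$. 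The verification for $Y$ is the adjoint mirror of this argument.

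The main obstacle is proving that $X$ is invertible. I plan to argue this via the orthogonal decomposition $\mathcal H=\mathcal N(B^\dagger-A^\dagger)^\perp\oplus\mathcal N(B^\dagger-A^\dagger)$. On $\mathcal N(B^\dagger-A^\dagger)^\perp=\mathcal N(B^*-A^*)^\perp$ the projection term vanishes, while $(B^\dagger-A^\dagger)^\dagger(B^*-A^*)$ restricts to a bounded bijection onto $\mathcal N(B^\dagger-A^\dagger)^\perp$: this uses that $(B^\dagger-A^\dagger)^\dagger$ is an isomorphism from $\mathcal R(B^\dagger-A^\dagger)$ onto $\mathcal N(B^\dagger-A^\dagger)^\perp$, combined with the range equality $\mathcal R(B^*-A^*)=\mathcal R(B^\dagger-A^\dagger)$ and the kernel equality established above. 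On $\mathcal N(B^\dagger-A^\dagger)$ the first term is zero and the projection is the identity. Thus $X$ is block-diagonal with invertible blocks, hence bijective, and its inverse is bounded by the open mapping theorem. The same reasoning, with adjoints, shows that $Y$ is invertible.
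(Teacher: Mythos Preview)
Your proof is correct and follows the same overall strategy as the paper: translate the two range hypotheses into $\mathcal R(B^\dagger-A^\dagger)=\mathcal R(B^*-A^*)$ and $\mathcal N(B^\dagger-A^\dagger)=\mathcal N(B^*-A^*)$, handle the converse by surjectivity of $X$ and $Y$, and build the invertible operators in the forward direction from those range/nullspace identities.

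The only difference is in the forward step. The paper simply invokes \cite[Corollary 1]{fillmore1971operator}, which says that two closed-range operators with the same range and the same nullspace differ by right multiplication with an invertible operator; this gives $X$ and $Y$ in one line. You instead reprove that corollary by hand, writing $X=(B^\dagger-A^\dagger)^\dagger(B^*-A^*)+P_{\mathcal N(B^\dagger-A^\dagger)}$ and checking invertibility via the block-diagonal decomposition. Your construction is exactly the standard proof of the Fillmore--Williams result, so the two arguments are really the same; the paper's version is shorter by delegating, while yours is self-contained and gives an explicit formula for $X$.
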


\begin{proof}
	Suppose that $\mathcal{R}(B-A)$ is closed, $\mathcal{R}((B-A)^\dagger)=\mathcal{R}(B^\dagger-A^\dagger)$ and $\mathcal{R}((B^*-A^*)^\dagger)=\mathcal{R}((B^*)^\dagger-(A^*)^\dagger)$. Then $\mathcal{R}(B^*-A^*) = \mathcal N(B-A)^\perp=\mathcal{R}((B-A)^\dagger)=\mathcal{R}(B^\dagger-A^\dagger)$ and  $\mathcal N(B^\dagger-A^\dagger)=\mathcal R((B^\dagger-A^\dagger)^*)^\perp=\mathcal{R}((B^*-A^*)^\dagger)^\perp=\mathcal{N}(B^*-A^*)$. Hence, by \cite[Corollary 1]{fillmore1971operator} there exists an invertible operator $X\in\mathcal L(\mathcal{H})$ such that 
	$(B^\dagger  -A^\dagger) X=B^*-A^*$. In a similar way it can be proved that there exists an  invertible operator $Y\in\mathcal L(\mathcal{H})$ such that $((B^*)^\dagger  -(A^*)^\dagger) Y=B-A$.
	
	Conversely, if $\mathcal{R}(B-A)$ is closed and there exist invertible operators $X,Y\in\mathcal L(\mathcal{H})$ such that $(B^\dagger  -A^\dagger) X=B^*-A^*$ and $((B^*)^\dagger  -(A^*)^\dagger) Y=B-A$ then $\mathcal{R}((B-A)^\dagger)=\mathcal{R}(B^*-A^*)=\mathcal{R}((B^\dagger  -A^\dagger) X)=\mathcal{R}(B^\dagger  -A^\dagger)$ and $\mathcal{R}(((B  -A)^*)^\dagger)=\mathcal{R}(B  -A) = \mathcal{R}(((B^*)^\dagger  -(A^*)^\dagger) Y)= \mathcal{R}((B^*)^\dagger  -(A^*)^\dagger)$. Then the assertion follows.
\end{proof}

\section{Moore-Penrose inverse and operator orders}\label{seccion-monoticidad}

In this section we analyze the behavior of the Moore-Penrose inverse of $A$ and $B$ in $\mathcal{L}(\mathcal{H})$ when the condition  ``$A$ is less or equal than $B$'' holds for the different classes of orders studied in this article.

It is known that the Moore-Penrose inverse is antitonic (or decreasing) on certain subsets of  $\mathcal{L}^h(\mathbb{C}^n)$, see for example \cite{MR1048801}.  Next we study this condition in the infinite dimensional context. The following result refers to  the antitonicity property of the Moore-Penrose inverse on $\mathcal{L}^+$.

\begin{teo}\label{lowner-order}
Consider $A,B\in \mathcal{L}^+$  closed range operators. Then any two of the following conditions imply the third condition:
\begin{enumerate}
\item $A\leq B$;
\item $B^\dagger\leq A^\dagger$;
\item $\mathcal{R}(A)\cap\mathcal{N}(B)=\mathcal{R}(B)\cap\mathcal{N}(A)=\{0\}$.
\end{enumerate}

\end{teo}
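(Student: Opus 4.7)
The plan is to prove the three implications $(1)\wedge(3)\Rightarrow(2)$, $(2)\wedge(3)\Rightarrow(1)$ and $(1)\wedge(2)\Rightarrow(3)$ separately, and to exploit the $\dagger$-symmetry of condition (3) to shortcut the second. Notice that for a closed range $A\in\mathcal{L}^+$ we have $\mathcal{R}(A^\dagger)=\mathcal{R}(A)$, $\mathcal{N}(A^\dagger)=\mathcal{N}(A)$, and $(A^\dagger)^\dagger=A$, so (3) is invariant under the replacement $(A,B)\mapsto(A^\dagger,B^\dagger)$.

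Two elementary observations will do most of the work. First, if $A,B\in\mathcal{L}^+$ and $A\leq B$, then $\mathcal{N}(B)\subseteq\mathcal{N}(A)$: indeed, $Bx=0$ forces $0\leq\langle Ax,x\rangle\leq\langle Bx,x\rangle=0$, and positivity then gives $Ax=0$. Combined with the orthogonal decomposition $\mathcal{H}=\mathcal{R}(A)\oplus\mathcal{N}(A)$ (available because $A=A^*$ has closed range), this translates into $\mathcal{R}(A)\subseteq\mathcal{R}(B)$. Second, on its range each of $A$ and $B$ acts as an invertible positive operator, and $A^\dagger$ is $0$ on $\mathcal{N}(A)$ and coincides with $(A|_{\mathcal{R}(A)})^{-1}$ on $\mathcal{R}(A)$ (and analogously for $B^\dagger$).

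For $(1)\wedge(3)\Rightarrow(2)$, the first observation yields $\mathcal{R}(A)\subseteq\mathcal{R}(B)$. Any $y\in\mathcal{R}(B)$ decomposes as $y=y_1+y_2$ with $y_1\in\mathcal{R}(A)\subseteq\mathcal{R}(B)$ and $y_2\in\mathcal{N}(A)$; then $y_2=y-y_1\in\mathcal{R}(B)\cap\mathcal{N}(A)=\{0\}$ by (3), so $y\in\mathcal{R}(A)$. Hence $\mathcal{R}(A)=\mathcal{R}(B)$, and consequently $\mathcal{N}(A)=\mathcal{N}(B)$. On the common range $\mathcal{R}(A)$ the restrictions are invertible positive operators with $A|_{\mathcal{R}(A)}\leq B|_{\mathcal{R}(A)}$; by the classical antitonicity of inversion on invertible positive operators one gets $(B|_{\mathcal{R}(A)})^{-1}\leq (A|_{\mathcal{R}(A)})^{-1}$, which after extension by $0$ on $\mathcal{N}(A)=\mathcal{N}(B)$ is precisely $B^\dagger\leq A^\dagger$. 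The implication $(2)\wedge(3)\Rightarrow(1)$ then follows by applying this to the pair $(B^\dagger,A^\dagger)$, using the $\dagger$-symmetry noted above and $(A^\dagger)^\dagger=A$.

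Finally, for $(1)\wedge(2)\Rightarrow(3)$, (1) gives $\mathcal{R}(A)\subseteq\mathcal{R}(B)$, so trivially $\mathcal{R}(A)\cap\mathcal{N}(B)\subseteq\mathcal{R}(B)\cap\mathcal{N}(B)=\{0\}$; in the same way, (2) gives $\mathcal{R}(B)=\mathcal{R}(B^\dagger)\subseteq\mathcal{R}(A^\dagger)=\mathcal{R}(A)$, which forces $\mathcal{R}(A)=\mathcal{R}(B)$ and hence $\mathcal{R}(B)\cap\mathcal{N}(A)=\mathcal{R}(A)\cap\mathcal{N}(A)=\{0\}$. The main technical point is the invocation of antitonicity of inversion for invertible positive operators on a Hilbert space; this is the one place where an external fact (Löwner--Heinz with exponent $-1$, or a direct two-line proof) is needed, and everything else is a consequence of the self-adjoint closed-range decomposition.
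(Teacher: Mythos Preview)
Your proof is correct, and it follows the same overall skeleton as the paper's proof: in each of the three implications you first reduce to $\mathcal{R}(A)=\mathcal{R}(B)$, and the decomposition argument you give for $\mathcal{R}(B)\subseteq\mathcal{R}(A)$ under hypotheses (1) and (3) is essentially identical to the paper's.

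Where you diverge is in the core step of $(1)\wedge(3)\Rightarrow(2)$. Once $\mathcal{R}(A)=\mathcal{R}(B)$ is established, you restrict to this common reducing subspace and invoke the L\"owner--Heinz antitonicity of $t\mapsto t^{-1}$ on invertible positive operators; the paper instead works with square roots and Douglas' theorem, showing that $A\leq B$ gives $\|(B^{1/2})^\dagger A^{1/2}\|\leq 1$, hence $\|A^{1/2}(B^{1/2})^\dagger\|\leq 1$, and then that $\mathcal{R}((B^{1/2})^\dagger)\subseteq\mathcal{R}((A^{1/2})^\dagger)$ forces (again via Douglas and the infimum characterization of the reduced solution norm) $B^\dagger\leq A^\dagger$. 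Your route is shorter and conceptually cleaner once you accept the external input of operator antitonicity of inversion; the paper's route is more self-contained within the Douglas-theorem framework it has already set up in the preliminaries, and avoids any appeal to L\"owner--Heinz. Your use of the $\dagger$-symmetry of condition (3) to dispatch $(2)\wedge(3)\Rightarrow(1)$ is also a nice economy over the paper's ``similar'' remark.
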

\begin{proof} 
	First observe that  $T^\dagger= (T^{1/2})^\dagger (T^{1/2})^\dagger$ for  $T\in\mathcal{L}^+$ with closed range. Now, suppose $A\leq B$ and $B^\dagger\leq A^\dagger$. Since $A$ and $B$ have closed ranges, then by Douglas' theorem it holds that $\mathcal{R}(A)=\mathcal{R}(B)$. Hence item 3 follows. 
	
	Suppose that  $A\leq B$ and $\mathcal{R}(B)\cap\mathcal{N}(A)=\{0\}$. By Douglas' theorem we get that the equation $B^{1/2}X=A^{1/2}$ has solution and that $\|(B^{1/2})^\dagger A^{1/2}\|\leq 1$.  Now, observe that $\|A^{1/2}(B^{1/2})^\dagger\|=\|(B^{1/2})^\dagger A^{1/2}\|\leq 1$. On the other hand,  $\mathcal{R}(B)\subseteq \mathcal{R}(A)$. In fact, if $y\in \mathcal{R}(B)$ then $y=y_1+y_2$ where $y_1\in \mathcal{R}(A)$ and $y_2\in \mathcal{R}(B)\cap\mathcal{N}(A)$. Then,  by the hypothesis, it holds that $y-y_1=0$ and  then,  $\mathcal{R}(B)\subseteq \mathcal{R}(A)$. Therefore $\mathcal{R}((B^{1/2})^\dagger)\subseteq\mathcal{R}((A^{1/2})^\dagger)$. Thus, again by Douglas' theorem it holds that there exists $\lambda \geq 0$ such that  $B^\dagger \leq \lambda A^\dagger$. Now consider the closed and convex set  $\Lambda=\{\lambda \geq 0: \  B^\dagger \leq \lambda A^\dagger\}$.   Since  $ \|A^{1/2}(B^{1/2})^\dagger\|=\underset{\lambda\in\Lambda}{inf}{\lambda}$, it follows that $ \|A^{1/2}(B^{1/2})^\dagger\|\in\Lambda$. Hence,    $B^\dagger\leq A^\dagger$. The proof that items 1 and 3 imply item 2 is similar to the above one.
\end{proof}

\begin{cor}
Let $A,B\in \mathcal{L}^+$ be closed range operators such that $\mathcal{R}(A)=\mathcal{R}(B)$. Then $A\leq B$ if and only if $B^\dagger\leq A^\dagger$.
\end{cor}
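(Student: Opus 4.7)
The plan is to deduce the corollary directly from Theorem \ref{lowner-order} by verifying that condition (3) of that theorem is automatic under the assumption $\mathcal{R}(A)=\mathcal{R}(B)$. The only ingredient needed is the standard fact that, for a positive operator $T\in\mathcal{L}^+$ with closed range, one has $\mathcal{N}(T)=\mathcal{R}(T)^\perp$, so that $\mathcal{H}=\mathcal{R}(T)\oplus\mathcal{N}(T)$.

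First I would apply this observation to both $A$ and $B$ and combine it with the hypothesis $\mathcal{R}(A)=\mathcal{R}(B)$ to obtain
\[
\mathcal{R}(A)\cap\mathcal{N}(B)=\mathcal{R}(A)\cap\mathcal{R}(B)^\perp=\mathcal{R}(A)\cap\mathcal{R}(A)^\perp=\{0\},
\]
and by the symmetric computation $\mathcal{R}(B)\cap\mathcal{N}(A)=\{0\}$. This verifies condition (3) of Theorem \ref{lowner-order}.

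With condition (3) at hand, Theorem \ref{lowner-order} asserts that any one of conditions (1) and (2) forces the other, which is exactly the equivalence $A\leq B \Longleftrightarrow B^\dagger\leq A^\dagger$. There is no real obstacle here; the entire statement reduces to the self-orthogonality remark in the first step, and the rest is a direct invocation of the previous theorem.
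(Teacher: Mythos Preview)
Your proof is correct and is exactly the intended argument: the paper states this corollary immediately after Theorem~\ref{lowner-order} without proof, and your verification that $\mathcal{R}(A)=\mathcal{R}(B)$ forces condition~(3) (via $\mathcal{N}(T)=\mathcal{R}(T)^\perp$ for $T\in\mathcal{L}^+$) is the obvious way to read off the corollary from the theorem.
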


\begin{rem}
	Theorem \ref{lowner-order} is not true  for Hermitian operators, in general.  In fact, consider $\mathcal{H}=\mathbb{R}^2$ and let $A=\left(\begin{matrix}
	-1 & 0  \\
	0 & 1 
	\end{matrix}\right)\in \mathcal{L}^h$ and $B=\left(\begin{matrix}
	1 & 0  \\
	0 & 2 
	\end{matrix}\right)\in\mathcal{L}^+$. It holds that $A\leq B$ and $\mathcal{R}(A)\cap\mathcal{N}(B)=\mathcal{R}(B)\cap\mathcal{N}(A)=\{0\}$. However, $A^\dagger$ and $B^\dagger$ are not related with the order $\leq$.
\end{rem}

\medskip

The next result can be found in \cite{arias2021partial}. We include an alternative proof of it. 
\begin{pro} \label{estrella} 
	Consider $A,B\in\mathcal{L}(\mathcal{H})$ closed range operators. Then,  $A\overset{*}{\leq }B$ if and only if $A^\dagger\overset{*}{\leq }B^\dagger$. 
\end{pro}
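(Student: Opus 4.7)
The plan is to reduce the claim directly to characterization \ref{estrellaMP} of Proposition \ref{rem-orden-estrella}, which says that for closed-range operators $A,B$,
$$
A\overset{*}{\leq} B \quad\Longleftrightarrow\quad AA^\dagger=BA^\dagger=AB^\dagger \ \text{ and }\ A^\dagger A=A^\dagger B=B^\dagger A.
$$
First I would record the standard fact that since $A$ and $B$ have closed range, so do $A^\dagger$ and $B^\dagger$, and moreover $(A^\dagger)^\dagger=A$ and $(B^\dagger)^\dagger=B$. This ensures that the above characterization is equally applicable to the pair $(A^\dagger,B^\dagger)$.

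Applying the characterization to $A^\dagger\overset{*}{\leq}B^\dagger$ and then substituting $(A^\dagger)^\dagger=A$ and $(B^\dagger)^\dagger=B$, one obtains the conditions
$$
A^\dagger A=B^\dagger A=A^\dagger B \ \text{ and }\ AA^\dagger=AB^\dagger=BA^\dagger,
$$
which are exactly the same equations (up to reordering) as those characterizing $A\overset{*}{\leq}B$. The two orders therefore describe the same condition, and the equivalence follows in both directions simultaneously.

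I do not expect any real obstacle: the content of the proof is essentially the observation that the system of identities in \ref{estrellaMP} is symmetric under the involution $T\leftrightarrow T^\dagger$, combined with $(T^\dagger)^\dagger=T$ for closed-range $T$. The only point that deserves an explicit mention is that the closed-range hypothesis on $A,B$ is precisely what licenses the use of \ref{estrellaMP} for $(A^\dagger,B^\dagger)$ as well.
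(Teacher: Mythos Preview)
Your proof is correct and essentially self-contained: the characterization in item~\ref{estrellaMP} of Proposition~\ref{rem-orden-estrella} is manifestly invariant under the substitution $T\leftrightarrow T^\dagger$ once one uses $(T^\dagger)^\dagger=T$ for closed-range $T$, and you have verified this carefully.

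The paper argues along a different line. It invokes the range-decomposition characterization of the star order (Proposition~\ref{caracterizaciones ordenes}, item~2), together with the dagger subtractivity identity $(B-A)^\dagger=B^\dagger-A^\dagger$ valid under $A\overset{*}{\leq}B$ (quoted from \cite[Corollary 4.10]{MR3682701}), to transport the orthogonal decompositions $\mathcal{R}(B)=\mathcal{R}(A)\oplus\mathcal{R}(B-A)$ and $\mathcal{R}(B^*)=\mathcal{R}(A^*)\oplus\mathcal{R}(B^*-A^*)$ into the corresponding decompositions for $B^\dagger$. Your route is shorter and avoids both the external citation and the closed-range argument for $B-A$; the paper's route, on the other hand, makes explicit the link with dagger subtractivity and the range-sum picture, which are recurring themes later in the article (e.g.\ Theorem~\ref{minus y DSP entonces estrella} and Corollary~\ref{equiv2,estrella, menos,diamante}).
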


\begin{proof}
	If $A\overset{*}{\leq }B$ then  $\mathcal{R}(B)=\mathcal{R}(A)\oplus\mathcal{R}(B-A)$ and $\mathcal{R}(B^*)=\mathcal{R}(A^*)\oplus\mathcal{R}(B^*-A^*)$. By \cite[Corollary 3.5]{MR3682701}, it holds that $R(B-A)$ is closed. Also,  $B^\dagger-A^\dagger =(B-A)^\dagger$, see \cite[Corollary 4.10]{MR3682701}. Therefore, $\mathcal{R}(B^\dagger-A^\dagger)=\mathcal{R}(B^*-A^*)$ and $\mathcal{R}((B^*)^\dagger- (A^*)^\dagger )=\mathcal{R}(B-A)$. Hence,  $\mathcal{R}(B^\dagger)=\mathcal{R}(B^*)=\mathcal{R}(A^*)\oplus \mathcal{R}(B^*-A^*)=\mathcal{R}(A^\dagger)\oplus\mathcal{R}(B^\dagger-A^\dagger)$ and $\mathcal{R}((B^\dagger)^*)=\mathcal{R}(B)=\mathcal{R}(A)\oplus\mathcal{R}((B^*)^\dagger-(A^*)^\dagger)$, i.e. $A^\dagger\overset{*}{\leq }B^\dagger$. The converse is similar.
\end{proof}

\begin{cor} Consider  $A,B\in\mathcal{L}(\mathcal{H})$ such that  $A$ is an EP operator and $B$ admits group inverse. Then
	$A\overset{\sharp} \leq B$ if and only if $A^\dagger\overset{\sharp} \leq B^\dagger$.
\end{cor}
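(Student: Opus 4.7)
The plan is to reduce the corollary to Proposition \ref{estrella} via Proposition \ref{relationships between orders}(4). That item converts the sharp order into the star order whenever the smaller operator is EP and the larger one is group invertible; Proposition \ref{estrella} then transports the resulting star-order statement to the Moore--Penrose inverses. So the skeleton of the argument is the chain
\[
A \overset{\sharp}{\leq} B \ \iff\ A \overset{*}{\leq} B \ \iff\ A^\dagger \overset{*}{\leq} B^\dagger \ \iff\ A^\dagger \overset{\sharp}{\leq} B^\dagger,
\]
where the middle equivalence is Proposition \ref{estrella} and the outer two are the two instances of Proposition \ref{relationships between orders}(4).

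The only thing requiring a short check is that the rightmost equivalence is actually available, i.e.\ that $A^\dagger$ is EP and $B^\dagger$ admits a group inverse. For $A^\dagger$ this follows from the identities $\mathcal{R}(A^\dagger)=\mathcal{R}(A^*)$ and $\mathcal{R}((A^\dagger)^*)=\mathcal{N}(A^\dagger)^\perp=\mathcal{R}(A)$: since $A$ is EP, $\mathcal{R}(A)=\mathcal{R}(A^*)$, so $\mathcal{R}(A^\dagger)=\mathcal{R}((A^\dagger)^*)$, and $A^\dagger$ has closed range because $\mathcal{R}(A)$ does. For $B^\dagger$, the analogous identities $\mathcal{R}(B^\dagger)=\mathcal{R}(B^*)$ and $\mathcal{N}(B^\dagger)=\mathcal{N}(B^*)$ reduce the decomposition condition of Theorem \ref{group inverse conditions} for $B^\dagger$ to the same condition for $B^*$; and, as noted right after Theorem \ref{group inverse conditions}, group invertibility of $B$ is equivalent to that of $B^*$.

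With those two verifications in hand, the chain of equivalences displayed above is justified and yields the corollary immediately. I do not anticipate any genuine obstacle: the whole proof is an assembly of results already proved in the section, and the only slightly non-trivial step is the stability of the ``EP plus group invertible'' hypothesis under passage to the Moore--Penrose inverse, which is settled by the standard range/nullspace identities for $T^\dagger$.
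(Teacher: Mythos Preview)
Your proof is correct and follows exactly the same route as the paper: both argue via Proposition \ref{relationships between orders}(4) and Proposition \ref{estrella} through the chain $A \overset{\sharp}{\leq} B \iff A \overset{*}{\leq} B \iff A^\dagger \overset{*}{\leq} B^\dagger \iff A^\dagger \overset{\sharp}{\leq} B^\dagger$. Your version is in fact slightly more careful, since you explicitly verify that $A^\dagger$ is EP and $B^\dagger$ is group invertible, which the paper's one-line proof leaves implicit.
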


\begin{proof}
It follows from the above proposition and Remark \ref{relationships between orders}.
\end{proof}

Now we prove that the Moore-Penrose map is increasing  on the set of Hermitian operators if we consider $A$ and $B$ related by both the  L\"owner and the star orders. 

\begin{teo}\label{lowner y estrella}
	Consider $A,B\in\mathcal{L}^h$  closed range operators. Then, $A \leq B$ and $A \overset{*}{\leq} B$ if and only if $A^\dagger \leq B^\dagger$ and  $A^\dagger \overset{*}{\leq} B^\dagger$.  
\end{teo}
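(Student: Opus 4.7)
The plan is to reduce the biconditional to one about the positivity of $B-A$ versus the positivity of $B^\dagger - A^\dagger$, using the dagger substractivity property that the star order already guarantees, and then appealing to a standard fact about the Moore-Penrose inverse of a Hermitian closed-range operator.

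First I would dispense with the star-order parts of both sides by invoking Proposition \ref{estrella}: $A \overset{*}{\leq} B$ if and only if $A^\dagger \overset{*}{\leq} B^\dagger$. So once either direction of the L\"owner statement is established under the common hypothesis of the star order, the other order hypothesis transfers automatically. This reduces the claim to: under $A \overset{*}{\leq} B$ with $A,B\in\mathcal{L}^h$ of closed range, $A\leq B$ if and only if $A^\dagger \leq B^\dagger$.

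The key auxiliary fact I would record is that for a Hermitian operator $T$ with closed range, $T \geq 0$ if and only if $T^\dagger \geq 0$. One direction uses that $T\geq0$ with closed range implies $T^{1/2}$ has closed range, so $T^\dagger = (T^{1/2})^\dagger ((T^{1/2})^\dagger)^* \geq 0$. The converse follows by applying the same reasoning to $T^\dagger$ and using $(T^\dagger)^\dagger = T$, which is valid because $T$ has closed range.

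Finally I would apply this auxiliary fact to $T = B - A$. Since $A,B$ are Hermitian, so is $B-A$. By \cite[Corollary 3.5]{MR3682701}, the star order assumption and closed range of $A,B$ force $\mathcal{R}(B-A)$ to be closed, and by \cite[Corollary 4.10]{MR3682701} we have the dagger substractivity formula $(B-A)^\dagger = B^\dagger - A^\dagger$. Thus $B - A \geq 0$ if and only if $(B-A)^\dagger = B^\dagger - A^\dagger \geq 0$, which is exactly $A \leq B \Leftrightarrow A^\dagger \leq B^\dagger$, completing the argument. The only potentially delicate step is the auxiliary closed-range positivity fact, but this is standard; all the work of making the subtraction behave well with the Moore-Penrose map has already been packaged into the star order via \cite{MR3682701}.
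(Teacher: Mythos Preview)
Your proof is correct and follows essentially the same route as the paper's own argument: both use Proposition \ref{estrella} to handle the star-order equivalence, invoke the closedness of $\mathcal{R}(B-A)$ together with the dagger substractivity $(B-A)^\dagger=B^\dagger-A^\dagger$ from \cite{MR3682701}, and then conclude $B^\dagger-A^\dagger\geq 0$ from $B-A\geq 0$. The only difference is cosmetic: you make the auxiliary fact ``$T\in\mathcal{L}^h$ with closed range satisfies $T\geq 0 \Leftrightarrow T^\dagger\geq 0$'' explicit and prove it, whereas the paper uses it silently in the line $(B-A)^\dagger\geq 0$.
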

\begin{proof}
	Suppose $A  \leq  B$ and $A \overset{*}{\leq} B$. Then  $\mathcal{R}(B)=\mathcal{R}(A)\oplus\mathcal{R}(B-A)$ and, by \cite[Theorem 2.3]{fillmore1971operator} it holds that $\mathcal{R}(B-A)$ is closed.
	Moreover,  by  \cite[Corollary 4.10]{MR3682701}, it holds that $B^\dagger-A^\dagger =(B-A)^\dagger \geq 0$. Hence $A^\dagger \leq B^\dagger$.
	Also, by Proposition \ref{estrella}, it holds that $A^\dagger \overset{*}{\leq} B^\dagger$.  The converse follows similarly.
\end{proof}

\begin{rem}\label{lowymenos}
	Note that if we consider the orders $\leq$ and $\overset{-}\leq$ in Theorem  \ref{lowner y estrella} then the result is false, in general. In fact, consider $\mathcal{H}=\mathbb{R}^2$ and the operators $A,B\in\mathcal{L}^h$ defined by  $A=\left(\begin{matrix}
	1 & 1  \\
	1 & 1  \\
	\end{matrix}\right)$ and $B=\left(\begin{matrix}
	1 & 1  \\
	1 & 4  \\
	\end{matrix}\right)$. Then it holds that $A\leq B$ and  $A\overset{-}{\leq}B$.  Now, note that $A^\dagger=\left(\begin{matrix}
	1/4 & 1/4  \\
	1/4 & 1/4  \\
	\end{matrix}\right)$, $B^\dagger=\left(\begin{matrix}
	4/3 & -1/3  \\
	-1/3 & 1/3  \\
	\end{matrix}\right)$ and  $A^\dagger\overset{-}{\not\leq}B^\dagger$ because  $\mathcal{R}(B^\dagger - A^\dagger)\cap\mathcal{R}(A^\dagger)\neq \{0\}$.
\end{rem}

\begin{rem}
	If  the orders $\leq$ and $\overset{\diamond}\leq$ are considered in Theorem  \ref{lowner y estrella} then the result is false, in general. In fact, consider $\mathcal{H}=\mathbb{R}^3$ and $A, B\in\mathcal{L}^h$ defined by $A=\left(\begin{matrix}
	1 & 1 & 0  \\
	1 & 1 & 0  \\
	0 & 0 & 0
	\end{matrix}\right)$ and $B=\left(\begin{matrix}
	1 & 1 & 0  \\
	1 & 1 & 1  \\
	0 & 1 & 1
	\end{matrix}\right)$. It is easy to check that  $A\leq B$ and $A\overset{\diamond}{\leq}B$. Now, $A^\dagger=\left(\begin{matrix}
	1/4 & 1/4  & 0 \\
	1/4 & 1/4 & 0  \\
	0 & 0 & 0
	\end{matrix}\right)$ and $B^\dagger=\left(\begin{matrix}
	0 & 1 & -1  \\
	1 & -1 & 1  \\
	-1 & 1 & 0
	\end{matrix}\right)$. But $B^\dagger-A^\dagger=\left(\begin{matrix}
	-1/4 & 3/4 & -1  \\
	3/4 & -5/4  & 1 \\
	-1 & 1 &0
	\end{matrix}\right)\not\geq 0$.  Then $A^\dagger \not\leq B^\dagger$.
\end{rem}

  As a consequence of Proposition \ref{estrella}, the following result gives a new charaterization of the star order in terms of operator ranges.
  Recall that given $A, B\in\mathcal{L}(\mathcal{H})$ with closed ranges,  
  $
  A\overset{\diamond}\leq B \textrm{  if and only if  } \mathcal{R}(B)=\mathcal{R}(A) \dot +\mathcal{R}((B^\dagger-A^\dagger)^*)  \textrm{  and } \mathcal{R}(B^*)=\mathcal{R}(A^*) \dot +\mathcal{R}(B^\dagger-A^\dagger),
  $ see Proposition \ref{caracterizaciones ordenes}.
 The next result states that the last sums are orthogonal if and only if the operators are related with the star order.

\begin{pro}
	Consider $A,B\in\mathcal{L}(\mathcal{H})$ with closed ranges. Then 
 $A\overset{*}\leq B$  if and only if  $\mathcal{R}(B)=\mathcal{R}(A) \oplus \mathcal{R}((B^\dagger-A^\dagger)^*)$ and $\mathcal{R}(B^*)=\mathcal{R}(A^*) \oplus \mathcal{R}(B^\dagger-A^\dagger)$.
\end{pro}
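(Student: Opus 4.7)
The plan is to reduce the statement to two previously established results: Proposition \ref{estrella} (which says $A\overset{*}\leq B$ iff $A^\dagger\overset{*}\leq B^\dagger$) and Proposition \ref{caracterizaciones ordenes}(2) (the range characterization of the star order). The whole proof should essentially be a translation of the given decompositions into the characterization of $A^\dagger \overset{*}\leq B^\dagger$.

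The key observation I would use is the pair of range identities for closed-range operators: for $T\in\mathcal{L}(\mathcal{H})$ with closed range, one has $\mathcal{R}(T^\dagger)=\mathcal{R}(T^*)$ and $\mathcal{R}((T^\dagger)^*)=\mathcal{R}((T^*)^\dagger)=\mathcal{R}(T)$. Applying this with $T=A$ and $T=B$, the pair of decompositions in the statement,
\begin{align*}
\mathcal{R}(B)&=\mathcal{R}(A) \oplus \mathcal{R}((B^\dagger-A^\dagger)^*), \\
\mathcal{R}(B^*)&=\mathcal{R}(A^*) \oplus \mathcal{R}(B^\dagger-A^\dagger),
\end{align*}
is seen to be literally the same as
\begin{align*}
\mathcal{R}((B^\dagger)^*)&=\mathcal{R}((A^\dagger)^*) \oplus \mathcal{R}((B^\dagger-A^\dagger)^*), \\
\mathcal{R}(B^\dagger)&=\mathcal{R}(A^\dagger) \oplus \mathcal{R}(B^\dagger-A^\dagger),
\end{align*}
noting that $(B^\dagger-A^\dagger)^*=(B^\dagger)^*-(A^\dagger)^*$.

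By Proposition \ref{caracterizaciones ordenes}(2) applied to the pair $(A^\dagger,B^\dagger)$, the second pair of decompositions is equivalent to $A^\dagger\overset{*}\leq B^\dagger$. Finally, by Proposition \ref{estrella}, $A^\dagger\overset{*}\leq B^\dagger$ is equivalent to $A\overset{*}\leq B$, which closes both directions simultaneously.

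I do not anticipate a genuine obstacle: once Proposition \ref{estrella} and the range characterization from Proposition \ref{caracterizaciones ordenes}(2) are in hand, the statement is simply the assertion that the star-order characterization for $A^\dagger,B^\dagger$ can be rewritten, via the identities $\mathcal{R}(T^\dagger)=\mathcal{R}(T^*)$ and $\mathcal{R}((T^\dagger)^*)=\mathcal{R}(T)$, as the decomposition appearing in the present proposition. The only point deserving a brief verification is that the identity $(B^\dagger-A^\dagger)^*=(B^\dagger)^*-(A^\dagger)^*$ produces the summands in the stated form, which is immediate from linearity of the adjoint.
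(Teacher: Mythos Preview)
Your proof is correct and takes essentially the same approach as the paper, which simply cites Proposition \ref{caracterizaciones ordenes} and Proposition \ref{estrella}. You have spelled out the translation step via $\mathcal{R}(T^\dagger)=\mathcal{R}(T^*)$ and $\mathcal{R}((T^\dagger)^*)=\mathcal{R}(T)$ that the paper leaves implicit.
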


\begin{proof}
It follows from Proposition \ref{caracterizaciones ordenes} and Proposition \ref{estrella}. 
\end{proof}

\begin{pro}\label{minusydagger} Let  $A, B\in\mathcal{L}(\mathcal{H})$ with closed ranges such that  $\mathcal{R}((B-A)^\dagger)=\mathcal{R}(B^\dagger-A^\dagger)$ and $\mathcal{R}((B^*-A^*)^\dagger)=\mathcal{R}((B^*)^\dagger-(A^*)^\dagger)$. Then
	$A\overset{-}\leq B$ 
	if and only if $A^\dagger \overset{-}\leq B^\dagger$.
\end{pro}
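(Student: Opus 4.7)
The plan is to use the diamond order as a bridge between the minus order on $(A,B)$ and the minus order on $(A^\dagger,B^\dagger)$. My two moves are: (i) apply Theorem \ref{minus equiv diamond} to the pair $(A,B)$, which under the range dagger subtractivity hypothesis yields $A\overset{-}\leq B \Longleftrightarrow A\overset{\diamond}\leq B$; (ii) establish independently the \emph{unconditional} equivalence $A\overset{\diamond}\leq B \Longleftrightarrow A^\dagger\overset{-}\leq B^\dagger$, valid whenever $A$ and $B$ have closed ranges. Chaining the two equivalences yields the statement.

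The substantive step is (ii). I would invoke the range characterizations in Proposition \ref{caracterizaciones ordenes}: on the one hand, $A\overset{\diamond}\leq B$ means $\mathcal{R}(B)=\mathcal{R}(A)\dot{+}\mathcal{R}((B^\dagger-A^\dagger)^*)$ and $\mathcal{R}(B^*)=\mathcal{R}(A^*)\dot{+}\mathcal{R}(B^\dagger-A^\dagger)$; on the other hand, $A^\dagger\overset{-}\leq B^\dagger$ means $\mathcal{R}(B^\dagger)=\mathcal{R}(A^\dagger)\dot{+}\mathcal{R}(B^\dagger-A^\dagger)$ and $\mathcal{R}((B^\dagger)^*)=\mathcal{R}((A^\dagger)^*)\dot{+}\mathcal{R}((B^\dagger-A^\dagger)^*)$. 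Using the standard closed-range identities $\mathcal{R}(T^\dagger)=\mathcal{R}(T^*)$ and $\mathcal{R}((T^\dagger)^*)=\mathcal{R}(T)$, together with the adjoint identity $(B^\dagger-A^\dagger)^*=(B^*)^\dagger-(A^*)^\dagger$, the two pairs of decompositions match term by term, and the equivalence falls out.

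The main thing to verify carefully is the term-by-term matching of the direct sum decompositions rather than merely the individual subspaces, together with checking that the rewrites using the closed-range identities are legitimate. Notably, the range dagger subtractivity hypothesis is consumed entirely in the first step in order to invoke Theorem \ref{minus equiv diamond}; the bridge equivalence $A\overset{\diamond}\leq B \Longleftrightarrow A^\dagger\overset{-}\leq B^\dagger$ in the second step is completely general for closed range operators, which suggests that this observation might itself be stated as a small standalone lemma before establishing the proposition.
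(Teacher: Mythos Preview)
Your proposal is correct and matches the paper's approach exactly: the paper's proof is the single line ``It follows from Theorem \ref{minus equiv diamond} and \cite[Corollary 4.5]{arias2021partial},'' where the cited external result is precisely your bridge equivalence $A\overset{\diamond}\leq B \Longleftrightarrow A^\dagger\overset{-}\leq B^\dagger$ for closed range operators. Your direct verification of this equivalence via the range characterizations in Proposition \ref{caracterizaciones ordenes} simply supplies an in-paper proof of what the authors outsource to the reference, and your observation that this bridge lemma is unconditional (no range dagger subtractivity needed) is exactly right.
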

\begin{proof}
	It follows from Theorem \ref{minus equiv diamond} and \cite[Corollary 4.5]{arias2021partial}.
\end{proof}

\begin{cor}\label{diamondydagger} Let  $A, B\in\mathcal{L}(\mathcal{H})$ with closed ranges such that  $\mathcal{R}((B-A)^\dagger)=\mathcal{R}(B^\dagger-A^\dagger)$ and $\mathcal{R}((B^*-A^*)^\dagger)=\mathcal{R}((B^*)^\dagger-(A^*)^\dagger)$. Then
	$A\overset{\diamond}\leq B$ 
	if and only if $A^\dagger \overset{\diamond}\leq B^\dagger$.
\end{cor}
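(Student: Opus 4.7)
The plan is to chain three biconditionals through the minus order:
$$A \overset{\diamond}{\leq} B \iff A \overset{-}{\leq} B \iff A^\dagger \overset{-}{\leq} B^\dagger \iff A^\dagger \overset{\diamond}{\leq} B^\dagger.$$
The first equivalence is an immediate application of Theorem \ref{minus equiv diamond} to the pair $(A,B)$, whose range dagger substractivity is exactly the standing hypothesis. The second equivalence is Proposition \ref{minusydagger}, again under the same hypothesis. The third I would obtain by applying Theorem \ref{minus equiv diamond} once more, now to the pair $(A^\dagger, B^\dagger)$.

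The only real work is therefore to verify that $(A^\dagger, B^\dagger)$ inherits the range dagger substractivity property. Using the identities $(T^\dagger)^\dagger = T$ and $(T^\dagger)^* = (T^*)^\dagger$ for closed-range $T$, the two conditions to check reduce to
$$\mathcal{R}\bigl((B^\dagger - A^\dagger)^\dagger\bigr) = \mathcal{R}(B-A) \quad \text{and} \quad \mathcal{R}\bigl(((B^*)^\dagger - (A^*)^\dagger)^\dagger\bigr) = \mathcal{R}(B^*-A^*).$$
For the first, I would argue that hypothesis (i) gives $\mathcal{R}(B^\dagger - A^\dagger) = \mathcal{R}((B-A)^\dagger) = \mathcal{N}(B-A)^\perp$, which is closed, so $B^\dagger - A^\dagger$ has closed range, and then
$$\mathcal{R}\bigl((B^\dagger - A^\dagger)^\dagger\bigr) = \overline{\mathcal{R}((B^\dagger - A^\dagger)^*)} = \overline{\mathcal{R}((B^*)^\dagger - (A^*)^\dagger)} = \overline{\mathcal{R}((B^*-A^*)^\dagger)} = \mathcal{R}(B-A),$$
where the third equality uses hypothesis (ii) and the fourth uses $\mathcal{R}(T^\dagger) = \overline{\mathcal{R}(T^*)}$ together with the fact that $\mathcal{R}(B-A)$ is closed (implicit in having $(B-A)^\dagger \in \mathcal{L}(\mathcal{H})$). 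The second identity follows by the symmetric argument, swapping the roles of starred and unstarred quantities.

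The main obstacle is really just this bookkeeping with ranges, null spaces, and the $(\cdot)^\dagger$/$(\cdot)^*$ interchange; there are no surprises once one tracks carefully that all the relevant ranges are closed. Once the range dagger substractivity property has been transferred from $(A,B)$ to $(A^\dagger, B^\dagger)$, the three-step chain closes the argument and yields the desired biconditional.
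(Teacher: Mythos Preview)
Your three-step chain through the minus order is the natural reading of the paper's terse proof (``It follows from [Proposition \ref{minusydagger}] and Theorem \ref{minus equiv diamond}''). There is, however, a gap in your verification that the range dagger substractivity property transfers to $(A^\dagger, B^\dagger)$. You claim $\mathcal{R}(B-A)$ is closed because ``$(B-A)^\dagger \in \mathcal{L}(\mathcal{H})$'' is implicit in the hypothesis; but the paper defines $T^\dagger$ for arbitrary $T$ as a (possibly unbounded) densely defined operator, and the standing hypothesis only equates ranges, so boundedness of $(B-A)^\dagger$ is not given. Without it your computation yields only $\mathcal{R}\bigl((B^\dagger - A^\dagger)^\dagger\bigr) = \overline{\mathcal{R}(B-A)}$, and the range dagger substractivity property for $(A^\dagger, B^\dagger)$ does not follow as a standalone fact from the hypotheses alone.

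The gap is easily closed. Recall (this is \cite[Corollary 4.5]{arias2021partial}, the ingredient behind Proposition \ref{minusydagger}) that for closed-range $A, B$ one has, unconditionally, $A \overset{-}{\leq} B \iff A^\dagger \overset{\diamond}{\leq} B^\dagger$; this is immediate from the range characterizations in Proposition \ref{caracterizaciones ordenes} once one uses $(T^\dagger)^\dagger = T$ and $\mathcal{R}(T^\dagger)=\mathcal{R}(T^*)$. Hence a single application of Theorem \ref{minus equiv diamond} to the pair $(A,B)$ already gives
\[
A \overset{\diamond}{\leq} B \iff A \overset{-}{\leq} B \iff A^\dagger \overset{\diamond}{\leq} B^\dagger,
\]
and the third biconditional in your chain (together with the auxiliary verification) becomes unnecessary.
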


\begin{proof}
It follows from the above proposition and Theorem \ref{minus equiv diamond}.
\end{proof}

\begin{rem}\label{monoticidadparaelmenos}
Proposition \ref{minusydagger} is false, in general, if the condition  $\mathcal{R}((B-A)^\dagger)=\mathcal{R}(B^\dagger-A^\dagger)$ and $\mathcal{R}((B^*-A^*)^\dagger)=\mathcal{R}((B^*)^\dagger-(A^*)^\dagger)$ is not required. In fact, consider $\mathcal{H}=\mathbb{R}^2$ and the operators $A,B\in\mathcal{L}(\mathcal{H})$ defined as in Remark \ref{lowymenos}. Then it holds that $A\overset{-}{\leq}B$ but $A^\dagger\not\overset{-}{\leq} B^\dagger$ because $\mathcal{R}(B^\dagger - A^\dagger)\cap\mathcal{R}(A^\dagger)\neq \{0\}$. 
\end{rem}
\medskip

\section{$B^\dagger A^\dagger$ as a generalized inverse of $AB$}\label{seccion-ROL}

Given $A, B\in\mathcal{L}(\mathcal{H})$ with closed ranges such that $AB$ has closed range,  the  reverse order law  property for the Moore-Penrose inverse, $(AB)^\dagger=B^\dagger A^\dagger$, has been studied for a long time. Several characterizations  of the reverse order law were given in terms of range inclusion conditions and commutativity between operators,  see \cite{MR317088,MR2567299,MR651705}  among other articles.  The following is a classical result due to Greville \cite{greville1966note} that was extended to bounded linear operators in Hilbert spaces in \cite{MR317088, MR651705}. 
Before that  remember that $(AB)^\dagger\in\mathcal{L}(\mathcal{H})$ if and only if $\mathcal{R}(AB)$ is closed. Then it is worth recalling that given $A,B\in\mathcal{L}(\mathcal{H})$ with closed ranges, $\mathcal{R}(AB)$ is closed  if and only if $\mathcal{R}(B)+\mathcal{N}(A)$ is closed;  see \cite{MR326424} or \cite[Theorem 4.1]{MR1340886}. 

\begin{pro}\label{dagger de AB con rg iguales} Let $A,B\in\mathcal{L}(\mathcal H)$ with closed ranges such that  $AB$ has closed range.  Then  $(AB)^\dagger=B^\dagger A^\dagger$ if and only if $\mathcal{R}(A^*AB)\subseteq \mathcal{R}(B)$ and $\mathcal{R}(BB^*A^*)\subseteq \mathcal{R}(A^*)$.
\end{pro}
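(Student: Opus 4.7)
The plan is to verify the four Moore--Penrose equations (\ref{moore-penrose-equations}) for the pair $(AB,\, X)$ with $X := B^\dagger A^\dagger$. Since $A$ and $B$ have closed ranges, $BB^\dagger = P_{\mathcal{R}(B)}$ and $A^\dagger A = P_{\mathcal{R}(A^*)}$, so the two range inclusions in the statement are equivalent to the projection identities
\[
BB^\dagger A^*AB = A^*AB \qquad \text{and} \qquad A^\dagger A\,BB^*A^* = BB^*A^*.
\]
Post-multiplying each of these by a suitable Moore--Penrose inverse and taking adjoints yields the commutation relations $[A^*A,\,BB^\dagger]=0$ and $[A^\dagger A,\,BB^*]=0$, from which $[A^\dagger A,\,BB^\dagger]=0$ and $[A^\dagger A,\,(BB^*)^\dagger]=0$ follow by spectral calculus (noting that $BB^\dagger$ is the support projection of $BB^*$).

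For sufficiency ($\Leftarrow$), I would verify the four Penrose equations in turn. Equation 3 (selfadjointness of $ABB^\dagger A^\dagger$) follows from $A^* = A^*AA^\dagger$ and $[A^*A,\,BB^\dagger]=0$ via $(A^\dagger)^*BB^\dagger A^* = (A^\dagger)^*A^*A\,BB^\dagger A^\dagger = AA^\dagger\cdot ABB^\dagger A^\dagger = ABB^\dagger A^\dagger$. Equation 4 (selfadjointness of $B^\dagger A^\dagger AB$) follows from the Douglas-type formulas $B^\dagger = B^*(BB^*)^\dagger$ and $(B^*)^\dagger = (BB^*)^\dagger B$ combined with $[A^\dagger A,\,(BB^*)^\dagger]=0$:
\[
B^\dagger A^\dagger AB = B^*(BB^*)^\dagger A^\dagger A B = B^*A^\dagger A (BB^*)^\dagger B = B^* A^\dagger A (B^*)^\dagger = (B^\dagger A^\dagger AB)^*.
\]
Equation 1 reduces via equation 3 and the first projection identity: $ABB^\dagger A^\dagger AB = (A^\dagger)^*BB^\dagger A^*AB = (A^\dagger)^*A^*AB = AA^\dagger AB = AB$. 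Equation 2 reduces immediately via $[A^\dagger A,\,BB^\dagger]=0$ and the Penrose identities $B^\dagger BB^\dagger = B^\dagger$, $A^\dagger A A^\dagger = A^\dagger$ to $B^\dagger A^\dagger AB\,B^\dagger A^\dagger = B^\dagger(A^\dagger A\,BB^\dagger)A^\dagger = B^\dagger BB^\dagger A^\dagger A A^\dagger = B^\dagger A^\dagger$.

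For necessity ($\Rightarrow$), assume $(AB)^\dagger = B^\dagger A^\dagger$. Penrose equation 3 gives $ABB^\dagger A^\dagger = (A^\dagger)^*BB^\dagger A^*$; post-multiplying by $AB$ and using Penrose equation 1, $AB = ABB^\dagger A^\dagger AB$, yields $AB = (A^\dagger)^*BB^\dagger A^*AB$; pre-multiplying by $A^*$ (and using $A^*(A^\dagger)^* = A^\dagger A$) gives the key identity
\[
A^*AB = A^\dagger A\,BB^\dagger A^*AB.
\]
To extract $\mathcal{R}(A^*AB) \subseteq \mathcal{R}(B)$, given $v = A^*ABx$ I would decompose $v = BB^\dagger v + (I - BB^\dagger)v$ orthogonally, so that $\|v\|^2 = \|BB^\dagger v\|^2 + \|(I - BB^\dagger)v\|^2$. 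The above identity rewrites as $v = A^\dagger A(BB^\dagger v) = P_{\mathcal{R}(A^*)}(BB^\dagger v)$, whence $\|v\| \leq \|BB^\dagger v\|$; the two norm relations together force $(I - BB^\dagger)v = 0$, i.e., $v \in \mathcal{R}(B)$. The second inclusion $\mathcal{R}(BB^*A^*) \subseteq \mathcal{R}(A^*)$ follows by applying the same argument to the adjoint $(AB)^* = B^*A^*$, whose Moore--Penrose inverse is the corresponding reverse-order product $(A^*)^\dagger(B^*)^\dagger$.

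I expect the main obstacle to lie in the necessity direction: the operator identity $A^*AB = A^\dagger A\,BB^\dagger A^*AB$ does not algebraically display the desired range inclusion, and the orthogonal-decomposition/norm argument on $v$ is what transfers information from the product of projections $A^\dagger A\,BB^\dagger$ to the single projection $BB^\dagger$.
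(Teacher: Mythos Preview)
The paper does not actually prove this proposition. It is stated as a classical result, attributed to Greville in the matrix case and to Bouldin and Izumino \cite{MR317088, MR651705} in the Hilbert-space setting, and is quoted without proof. So there is no ``paper's own proof'' to compare against.

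That said, your argument is correct and self-contained. A few remarks:

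\textbf{Sufficiency.} Your derivation of the commutation relations $[A^*A,\,BB^\dagger]=0$ and $[A^\dagger A,\,BB^*]=0$ from the two projection identities is right: post-multiplying $BB^\dagger A^*AB=A^*AB$ by $B^\dagger$ gives $BB^\dagger A^*A\,BB^\dagger=A^*A\,BB^\dagger$, and since the left side is self-adjoint, so is the right, whence the first commutation; the second is analogous. The passage to $[A^\dagger A,\,BB^\dagger]=0$ and $[A^\dagger A,\,(BB^*)^\dagger]=0$ by spectral calculus is legitimate because $A^\dagger A$ and $BB^\dagger$ are the support projections of $A^*A$ and $BB^*$ respectively. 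Your verification of the four Penrose equations from these commutations is clean and correct; this is essentially Izumino's route.

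\textbf{Necessity.} The identity $A^*AB=A^\dagger A\,BB^\dagger\,A^*AB$ is derived correctly from Penrose equations 1 and 3. Your norm argument is valid: writing $v=A^*ABx$, the identity says $v=P_{\mathcal{R}(A^*)}(BB^\dagger v)$, so $\|v\|\le\|BB^\dagger v\|$; combined with the trivial bound $\|BB^\dagger v\|\le\|v\|$ (since $BB^\dagger$ is an orthogonal projection), equality holds and Pythagoras forces $(I-BB^\dagger)v=0$. The symmetry argument for the second inclusion via $(B^*A^*)^\dagger=(A^*)^\dagger(B^*)^\dagger$ is also correct.

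In short: your proof is sound, and it supplies what the paper merely cites.
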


In the literature the reverse order law is also study   by means of the star, minus and sharp orders in the finite dimensional context, see for example \cite{malik2007matrix, MR2653538}. In this section we study the reverse order law for closed range operators in the infinite dimensional context in terms of  order and  range inclusion conditions. Some of the results that we expose are extensions of known results in the finite dimensional case.   

The following  characterization of the Moore-Penrose inverse will be useful in what follows, its proof can be found in \cite[Theorem 3.1]{arias2008generalized}:

\begin{lem}\label{MP-reduced solution}
Let $T\in\mathcal{L(\mathcal H)}$ be a closed range operator. Then, $T^\dagger$ is the Douglas'  reduced solution of the equation $TX=P_T$.
\end{lem}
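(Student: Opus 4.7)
The plan is to check that $T^\dagger$ satisfies the two defining conditions of the Douglas' reduced solution of the equation $TX=P_T$, and then invoke the uniqueness part of Douglas' theorem. Concretely, since $T$ has closed range, $P_T$ is the orthogonal projection onto $\mathcal{R}(T)$, so $\mathcal{R}(P_T)=\mathcal{R}(T)$ and the range inclusion hypothesis of Douglas' theorem is trivially met; what must be verified is that
\[
TT^\dagger=P_T \qquad \text{and}\qquad \mathcal{R}(T^\dagger)\subseteq \mathcal{N}(T)^\bot.
\]

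First I would verify the equation $TT^\dagger=P_T$. By the Moore-Penrose equations (\ref{moore-penrose-equations}), $TT^\dagger$ is Hermitian (equation 3) and $(TT^\dagger)^2=TT^\dagger TT^\dagger=TT^\dagger$ (equations 1 and 2), so $TT^\dagger$ is an orthogonal projection. Its range is contained in $\mathcal{R}(T)$ by construction, and equation 1 gives $TT^\dagger T=T$, which implies $\mathcal{R}(T)\subseteq \mathcal{R}(TT^\dagger)$. Therefore $TT^\dagger=P_{\mathcal{R}(T)}=P_T$.

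Next I would check the range condition $\mathcal{R}(T^\dagger)\subseteq \mathcal{N}(T)^\bot$. By equation 4, $T^\dagger T$ is Hermitian, and combined with equation 2 it is idempotent, so it is an orthogonal projection. For every $x\in\mathcal{N}(T)$ we have $T^\dagger T x=0$, so $\mathcal{R}(T^\dagger T)\subseteq \mathcal{N}(T)^\bot$. Finally, equation 2 yields $T^\dagger=T^\dagger T T^\dagger$, hence $\mathcal{R}(T^\dagger)\subseteq \mathcal{R}(T^\dagger T)\subseteq \mathcal{N}(T)^\bot$, as needed.

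Having verified both properties and observed that $\mathcal{R}(P_T)\subseteq \mathcal{R}(T)$, the uniqueness statement in Douglas' theorem identifies $T^\dagger$ as the (unique) Douglas' reduced solution of $TX=P_T$. There is no real obstacle here: the proof is a routine unwinding of the four Moore-Penrose equations, with the only subtlety being that the closedness of $\mathcal{R}(T)$ is what allows us to identify $P_T$ with $P_{\mathcal{R}(T)}$ and to have $T^\dagger\in\mathcal{L}(\mathcal{H})$ in the first place.
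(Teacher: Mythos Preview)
Your argument is correct: you verify directly, using the four Moore--Penrose equations, that $TT^\dagger=P_T$ and that $\mathcal{R}(T^\dagger)\subseteq\mathcal{N}(T)^\bot$, and then appeal to the uniqueness clause in Douglas' theorem. Each step is sound; a minor remark is that idempotence of $TT^\dagger$ already follows from equation~1 alone (since $TT^\dagger TT^\dagger=(TT^\dagger T)T^\dagger=TT^\dagger$), though invoking both 1 and 2 does no harm.

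As for comparison: the paper does not supply its own proof of this lemma but simply refers to \cite[Theorem~3.1]{arias2008generalized}. Your self-contained verification is therefore not a different route so much as an explicit unpacking of a result the paper treats as known; it has the advantage of being elementary and of keeping the argument internal to the Moore--Penrose axioms already recalled in Section~\ref{Preliminares}.
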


  There is a natural connection between EP operators and the reverse order law. Indeed, given EP operators with the same range, then the reverse order law holds, see \cite[Theorem 5]{djordjevic2001products}. We refer also to  \cite{baskett1969theorems} for finite dimensional results involving EP operators and the reverse order law. The next result provides a sufficient condition to obtain the reverse order law  for EP operators, in the infinite dimensional context.

\begin{teo}\label{AEP-reverseorderlaw}
	Consider $A, B\in\mathcal{L}(\mathcal{H})$ such that $A, B, AB$ are EP operators and  $\mathcal R(A)=\mathcal R(AB)$.
	
	   Then $(AB)^\dagger=B^\dagger A ^\dagger$. 
\end{teo}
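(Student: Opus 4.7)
The natural plan is to apply Proposition \ref{dagger de AB con rg iguales}: since $\mathcal R(AB)=\mathcal R(A)$ is closed, $(AB)^\dagger$ is bounded, and the reverse order law is equivalent to the two range containments
\[
\mathcal R(A^*AB)\subseteq \mathcal R(B) \qquad \text{and} \qquad \mathcal R(BB^*A^*)\subseteq \mathcal R(A^*).
\]
My first move is to translate the three EP hypotheses into range identities: $\mathcal R(A)=\mathcal R(A^*)$, $\mathcal R(B)=\mathcal R(B^*)$, and $\mathcal R(AB)=\mathcal R((AB)^*)=\mathcal R(B^*A^*)$. Combining these with the hypothesis $\mathcal R(A)=\mathcal R(AB)$ gives the chain
\[
\mathcal R(A)=\mathcal R(A^*)=\mathcal R(B^*A^*)\subseteq \mathcal R(B^*)=\mathcal R(B),
\]
so that $\mathcal R(A)\subseteq \mathcal R(B)$. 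The first range containment then falls out immediately from $\mathcal R(A^*AB)\subseteq \mathcal R(A^*)=\mathcal R(A)\subseteq \mathcal R(B)$.

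For the second containment, I would read the identity $\mathcal R(B^*A^*)=\mathcal R(A^*)$ just established as saying that $B^*$ sends $\mathcal R(A^*)$ onto itself; in particular $B^*(\mathcal R(A))\subseteq \mathcal R(A)$. Factoring $BB^*A^*=B\circ(B^*A^*)$ and using $\mathcal R(B^*A^*)=\mathcal R(A)$, the range of this product equals $B(\mathcal R(A))$, so the remaining task reduces to showing the invariance $B(\mathcal R(A))\subseteq \mathcal R(A)$.

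I expect the main obstacle to be exactly this last invariance, which is where the EP assumption on $B$ has to be combined with everything else. My plan is to exploit (i) the surjectivity $B^*(\mathcal R(A))=\mathcal R(A)$ just derived, (ii) the EP identities $BB^\dagger=B^\dagger B=P_{\mathcal R(B)}$ and $AA^\dagger=A^\dagger A=P_{\mathcal R(A)}$, and (iii) the inclusion $\mathcal R(A)\subseteq \mathcal R(B)=\mathcal R(B^*)$, which makes both $B|_{\mathcal R(B)}$ and $B^*|_{\mathcal R(B)}$ bijections of $\mathcal R(B)$. Transferring the surjectivity of $B^*$ on $\mathcal R(A)$ through these EP identities should force $B$ itself to preserve $\mathcal R(A)$, at which point both hypotheses of Proposition \ref{dagger de AB con rg iguales} are in place and the reverse order law $(AB)^\dagger=B^\dagger A^\dagger$ follows at once.
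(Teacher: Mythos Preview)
Your route through Proposition~\ref{dagger de AB con rg iguales} is different from the paper's (which instead argues that $B^\dagger A^\dagger$ is the Douglas reduced solution of $ABX=P_{AB}$ via Lemma~\ref{MP-reduced solution}), but both reductions are legitimate and both land on the same decisive point: one must show that $B$, or equivalently $B^\dagger$, carries $\mathcal R(A)$ into itself. You correctly flag this as the ``main obstacle,'' but your plan for it---to transfer the surjectivity $B^*(\mathcal R(A))=\mathcal R(A)$ to an invariance under $B$ using only the EP identities for $B$---remains a hope rather than an argument, and in fact the implication is false.

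Take $\mathcal H=\mathbb R^2$, $A=\left(\begin{matrix}0&0\\0&1\end{matrix}\right)$ and $B=\left(\begin{matrix}1&1\\0&1\end{matrix}\right)$. Then $A$ is Hermitian (hence EP), $B$ is invertible (hence EP), $AB=A$ is EP, and $\mathcal R(A)=\mathcal R(AB)$, so every hypothesis of the theorem is met. Here $B^*(0,1)^T=(0,1)^T$, so indeed $B^*(\mathcal R(A))=\mathcal R(A)$; yet $B(0,1)^T=(1,1)^T\notin\mathcal R(A)$, so $B(\mathcal R(A))\not\subseteq\mathcal R(A)$ and your second range containment fails. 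Computing directly, $(AB)^\dagger=A$ while $B^\dagger A^\dagger=B^{-1}A=\left(\begin{matrix}0&-1\\0&1\end{matrix}\right)$, so the reverse order law itself fails for this pair. The step you isolated therefore cannot be repaired under the stated hypotheses; the paper's own proof breaks at the parallel equality $\mathcal R(B^\dagger A)=\mathcal R(B^\dagger BA)$, which in this example reads $\mathrm{span}\,(-1,1)^T=\mathrm{span}\,(0,1)^T$.
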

\begin{proof}
	We will prove that 
	$B^\dagger A^\dagger$ is the  Douglas' reduced solution of the operator equation $ABX=P_{AB}$. 
	In fact, observe that $ABB^\dagger A^\dagger=AP_BA^\dagger=AA^\dagger=P_A=P_{AB}$, where the second equality follows because $\mathcal R(A^\dagger)=\mathcal R(A^*)=\mathcal R(A)=\mathcal{R}(B^*A^*)\subseteq\mathcal R(B^*)=\mathcal{R}(B)$. Furthermore, $\mathcal{R}(B^\dagger A^\dagger)= \mathcal{R}(B^\dagger A^*)=\mathcal{R}(B^\dagger A)=\mathcal{R}(B^\dagger BA)=\mathcal{R}(P_{B^*}A)=\mathcal{R}(P_{B}A)=\mathcal{R}(A)=\mathcal{R}(B^*A^*)=\mathcal{N}(AB)^\bot$. Therefore $B^\dagger A^\dagger$ is the Douglas' reduced solution of $ABX=P_{AB}$ and so, by  Lemma \ref{MP-reduced solution}, it follows that $(AB)^\dagger =B^\dagger A^\dagger$. 
\end{proof}

\begin{rem}
Note that if $A,B\in\mathcal{L}(\mathcal{H})$ are EP operators with $\mathcal{R}(A)=\mathcal{R}(B)$ then $AB$ is also an EP operator and $\mathcal{R}(A)=\mathcal{R}(AB)$. However, if $A, B$ and $AB$ are EP operators such that  $\mathcal{R}(A)=\mathcal{R}(AB)$ then $\mathcal{R}(A)$ is not necessarily equal to $\mathcal{R}(B)$. In fact, in $\mathcal{L}(\mathbb{R}^2)$ consider $A=\left(\begin{matrix}
	1 &  0  \\
	0 &  0  
	\end{matrix}\right)$ and $B= \left(\begin{matrix}
	1 &  0  \\
	0 &  1  
	\end{matrix}\right)$. It holds that $A, B$ and $AB$ are EP operators such that $\mathcal{R}(A)=\mathcal{R}(AB)$ but $\mathcal{R}(A)\neq \mathcal{R}(B)$. This remark shows that the hypothesis of Proposition \ref{AEP-reverseorderlaw} is weaker than  the hypothesis of \cite[Theorem 5]{djordjevic2001products}.
\end{rem}

The next result is a consequence of Proposition \ref{dagger de AB con rg iguales}. We include an alternative proof.

\begin{pro}\label{conmutan entonces reverse oreder law}
	Let $A,B\in\mathcal{L}^h$ be closed range operators such that $AB=BA$. Then $(AB)^\dagger=B^\dagger A^\dagger$.
\end{pro}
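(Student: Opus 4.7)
The natural strategy, parallel to the proof of Theorem \ref{AEP-reverseorderlaw}, is to invoke Lemma \ref{MP-reduced solution} and show that $B^\dagger A^\dagger$ is the Douglas reduced solution of the operator equation $(AB)X=P_{AB}$. The engine driving this is the fact that commutativity of the Hermitian operators $A,B$ propagates to their Moore-Penrose inverses, so that the set $\{A,B,A^\dagger,B^\dagger,P_A,P_B\}$ is a commuting family; once this is established the required identities fall out easily.

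\textbf{First step: propagation of commutativity.} Since $B$ is Hermitian with closed range, $\mathcal{H}=\mathcal{R}(B)\oplus\mathcal{N}(B)$ and $B$ has block form $B_1\oplus 0$, where $B_1$ is an invertible self-adjoint operator on $\mathcal{R}(B)$. Because $A$ is Hermitian and commutes with $B$, the subspaces $\mathcal{R}(B)$ and $\mathcal{N}(B)$ are both invariant under $A$ (for $\mathcal{N}(B)$: $B(Ay)=A(By)=0$; for $\mathcal{R}(B)=\mathcal{N}(B)^{\perp}$: self-adjointness of $A$). Thus $A=A_1\oplus A_2$, and since $B^\dagger=B_1^{-1}\oplus 0$, we obtain $AB^\dagger=B^\dagger A$. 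By symmetry $BA^\dagger=A^\dagger B$, and applying the same argument to the pair $(A^\dagger,B)$ gives $A^\dagger B^\dagger=B^\dagger A^\dagger$. Also, since $A$ and $B$ are Hermitian with closed range, $P_A=AA^\dagger=A^\dagger A$ and $P_B=BB^\dagger=B^\dagger B$, and these two projections commute.

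\textbf{Second step: Douglas reduced solution.} Using commutativity, compute
$$
(AB)(B^\dagger A^\dagger)=A(BB^\dagger)A^\dagger=AP_BA^\dagger=P_B(AA^\dagger)=P_BP_A,
$$
which is the orthogonal projection onto $\mathcal{R}(A)\cap\mathcal{R}(B)$. I next identify $\mathcal{R}(AB)$ with this intersection: the inclusion $\mathcal{R}(AB)\subseteq\mathcal{R}(A)\cap\mathcal{R}(B)$ is immediate (using $AB=BA$), and conversely if $y\in\mathcal{R}(A)\cap\mathcal{R}(B)$ then $y=P_AP_By=ABA^\dagger B^\dagger y\in\mathcal{R}(AB)$. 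In particular $\mathcal{R}(AB)$ is closed and $P_{AB}=P_AP_B$, so $(AB)(B^\dagger A^\dagger)=P_{AB}$. For the range condition, $\mathcal{R}(B^\dagger A^\dagger)=\mathcal{R}(A^\dagger B^\dagger)\subseteq\mathcal{R}(A^\dagger)\cap\mathcal{R}(B^\dagger)=\mathcal{R}(A)\cap\mathcal{R}(B)=\mathcal{R}(AB)=\mathcal{N}(AB)^{\perp}$, where the last equality uses that $AB$ is Hermitian (as $A,B$ are Hermitian and commute). Hence $B^\dagger A^\dagger$ is the Douglas reduced solution of $(AB)X=P_{AB}$, and by Lemma \ref{MP-reduced solution} it equals $(AB)^\dagger$.

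\textbf{Main obstacle.} The only genuine work is the first step: showing that commutativity of two commuting Hermitian closed-range operators lifts to their Moore-Penrose inverses. One cannot appeal to simultaneous diagonalisation in infinite dimensions, but the block decomposition argument via $\mathcal{H}=\mathcal{R}(B)\oplus\mathcal{N}(B)$ suffices and is what makes the whole verification routine. Once commutativity is in hand, the computations $(AB)(B^\dagger A^\dagger)=P_AP_B$ and the identification $\mathcal{R}(AB)=\mathcal{R}(A)\cap\mathcal{R}(B)$ are essentially forced, and the conclusion follows from Lemma \ref{MP-reduced solution} exactly as in the proof of Theorem \ref{AEP-reverseorderlaw}.
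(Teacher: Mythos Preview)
Your proof is correct, but it takes a different route from the paper's. The paper establishes the commutativity relations $AP_B=P_BA$, $P_AB=BP_A$, $P_AP_B=P_BP_A$ by appealing to the functional calculus for selfadjoint operators, then cites Izumino \cite[Proposition 2.1]{MR651705} to conclude that $\mathcal{R}(AB)$ is closed, and finally verifies the four Penrose equations directly. You instead propagate commutativity via an elementary block decomposition with respect to $\mathcal{H}=\mathcal{R}(B)\oplus\mathcal{N}(B)$, obtain closedness of $\mathcal{R}(AB)$ intrinsically from the identification $\mathcal{R}(AB)=\mathcal{R}(A)\cap\mathcal{R}(B)$, and conclude via the Douglas reduced solution characterization of Lemma \ref{MP-reduced solution}. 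Your argument is more self-contained (no external reference for closed range, no functional calculus), and it matches the style of the proofs of Theorem \ref{AEP-reverseorderlaw} and Proposition \ref{ROL-estrella}; the paper's direct verification of the Penrose equations is slightly shorter once the commutativity facts are granted.
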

\begin{proof} 
 First observe that since $AB=BA$, then $AP_B=P_BA$, $P_AB=BP_A$ and  $P_AP_B=P_BP_A$ (it follows as a consequence of the functional calculus for selfadjoint operators). Therefore, it also holds that $P_AP_B$ is an orthogonal projection.
  Then, by \cite[Proposition 2.1]{MR651705}, it follows that $\mathcal{R}(AB)$ is closed.
 
 Now, we will prove that  $(AB)^\dagger=B^\dagger A^\dagger$. Note that 
 $ABB^\dagger A^\dagger A B=AP_BP_AB=AP_AP_BB=AB$. Similarly, $B^\dagger A^\dagger AB B^\dagger A^\dagger=B^\dagger A^\dagger$. Moreover, note that $AB B^\dagger A^\dagger=AP_B  A^\dagger=P_BP_A$ is selfadjoint because $P_BP_A$ is an orthogonal projetions. In a similar way, it follows that $B^\dagger A^\dagger AB$ is selfadjoint. Hence $(AB)^\dagger=B^\dagger A^\dagger$.
\end{proof}

\begin{pro} \label{ROLyantitonicity}
	Let $A,B\in\mathcal{L}^+$ with closed ranges. If $A\leq B$ and $B^\dagger \leq A^\dagger$ then $ (AB)^\dagger=B^\dagger A^\dagger$ and  $(BA)^\dagger=A^\dagger B^\dagger$. 
\end{pro}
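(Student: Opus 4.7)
My plan is to reduce the problem to a routine application of Proposition \ref{dagger de AB con rg iguales}, after first extracting from the hypotheses the range equality $\mathcal{R}(A)=\mathcal{R}(B)$, and then to obtain the second identity by taking adjoints.

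\textbf{Step 1 (Range equality).} From $A\leq B$ with $A,B\in\mathcal{L}^+$ closed-range, Douglas' theorem applied to $A^{1/2}$ and $B^{1/2}$ yields $\mathcal{R}(A)=\mathcal{R}(A^{1/2})\subseteq \mathcal{R}(B^{1/2})=\mathcal{R}(B)$. Likewise $B^\dagger\leq A^\dagger$ (both $A^\dagger,B^\dagger$ are positive since $A,B$ are) gives $\mathcal{R}(B^\dagger)\subseteq \mathcal{R}(A^\dagger)$, i.e.\ $\mathcal{R}(B)\subseteq \mathcal{R}(A)$. Thus $\mathcal{R}(A)=\mathcal{R}(B)$. (This is exactly the intermediate fact used in the proof of Theorem \ref{lowner-order}.)

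\textbf{Step 2 (Closedness of $\mathcal{R}(AB)$).} Since $A\in\mathcal{L}^+$ is selfadjoint and closed-range, $\mathcal{N}(A)=\mathcal{R}(A)^\perp=\mathcal{R}(B)^\perp$, so
\[
\mathcal{R}(B)+\mathcal{N}(A)=\mathcal{R}(B)+\mathcal{R}(B)^\perp=\mathcal{H},
\]
which is closed. By the criterion recalled just before Proposition \ref{dagger de AB con rg iguales} (from \cite{MR326424} or \cite[Theorem 4.1]{MR1340886}), this ensures $\mathcal{R}(AB)$ is closed; the same argument with $A,B$ interchanged gives $\mathcal{R}(BA)$ closed.

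\textbf{Step 3 (Reverse order law for $AB$).} Using selfadjointness and $\mathcal{R}(A)=\mathcal{R}(B)$,
\[
\mathcal{R}(A^*AB)=\mathcal{R}(A^2B)\subseteq \mathcal{R}(A)=\mathcal{R}(B),\qquad \mathcal{R}(BB^*A^*)=\mathcal{R}(B^2A)\subseteq \mathcal{R}(B)=\mathcal{R}(A)=\mathcal{R}(A^*).
\]
Both range inclusions hold, so Proposition \ref{dagger de AB con rg iguales} gives $(AB)^\dagger=B^\dagger A^\dagger$.

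\textbf{Step 4 (Reverse order law for $BA$).} Since $A,B$ are selfadjoint we have $(AB)^*=BA$. Taking adjoints in the identity from Step 3 and using that $A^\dagger,B^\dagger\in\mathcal{L}^+$ (hence selfadjoint) yields
\[
(BA)^\dagger=\bigl((AB)^*\bigr)^\dagger=\bigl((AB)^\dagger\bigr)^*=(B^\dagger A^\dagger)^*=(A^\dagger)^*(B^\dagger)^*=A^\dagger B^\dagger.
\]

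There is no real obstacle here; the only point that requires care is verifying in Step 2 that $\mathcal{R}(AB)$ is closed so that $(AB)^\dagger$ lies in $\mathcal{L}(\mathcal{H})$ and Proposition \ref{dagger de AB con rg iguales} is applicable. The remaining work is purely a bookkeeping check of the range inclusions demanded by Greville's condition.
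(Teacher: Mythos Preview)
Your proof is correct and follows essentially the same approach as the paper: both deduce $\mathcal{R}(A)=\mathcal{R}(B)$ from Theorem \ref{lowner-order}, check that $\mathcal{R}(B)+\mathcal{N}(A)=\mathcal{H}$ is closed, and then invoke Proposition \ref{dagger de AB con rg iguales}. The only cosmetic difference is your Step 4, where you obtain $(BA)^\dagger=A^\dagger B^\dagger$ by taking adjoints via $(T^*)^\dagger=(T^\dagger)^*$, whereas the paper simply applies Proposition \ref{dagger de AB con rg iguales} a second time with the roles of $A$ and $B$ swapped; both are immediate.
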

\begin{proof}
	Suposse  $A\leq B$ and $B^\dagger\leq A^\dagger$. Then, as observed in the proof of Theorem \ref{lowner-order}, it holds that $\mathcal{R}(A)=\mathcal{R}(B)$. So that $\mathcal{R}(B)+\mathcal{N}(A)=\mathcal{R}(B)\oplus\mathcal{N}(B)$ is a closed subspace and then $AB$ has closed range. Hence, by Proposition \ref{dagger de AB con rg iguales} it follows that  $B^\dagger A^\dagger=(AB)^\dagger$ and $A^\dagger B^\dagger=(BA)^\dagger$
\end{proof}

In the next proposition we give a necessary and sufficient condition in terms of the star order  for the validity of  the reverse order law. A similar result is proved in \cite[Theorem 2.1]{MR2653538} for matrices. Here we present a different proof in the context of Hilbert space operators. 
\begin{pro}\label{ROL-AEP}
	Consider $A, B\in\mathcal{L}(\mathcal{H})$ with closed range such that $A$ is an   EP operator. Then,  $A\overset{*}{\leq }B$ if and only if $(AB)^\dagger=B^\dagger A ^\dagger$ and $A=P_AB$. 
\end{pro}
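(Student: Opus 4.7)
The plan is to lean on two consequences of the EP hypothesis on $A$: first, $A^\dagger A = A A^\dagger = P_A$; second, $\mathcal{R}(A^2) = \mathcal{R}(A)$ and $(A^2)^\dagger = (A^\dagger)^2$ (both immediate from the four Moore--Penrose equations, using $P_A A^\dagger = A^\dagger$ and $A P_A = A$). Combined with the observation that $A = P_A B$ forces $AB = A^2$ (multiply on the left by $A$ and use $A P_A = A$), the reverse order law $(AB)^\dagger = B^\dagger A^\dagger$ reduces to $B^\dagger A^\dagger = (A^\dagger)^2$, i.e.\ $(B^\dagger - A^\dagger) A^\dagger = 0$, which by the range identity $\mathcal{R}(A^\dagger) = \mathcal{R}(A^*) = \mathcal{R}(A)$ is equivalent to $B^\dagger y = A^\dagger y$ for every $y \in \mathcal{R}(A)$. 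This is the identity I will work with on both sides of the biconditional.

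For the implication $(\Rightarrow)$, assume $A \overset{*}{\leq} B$. Proposition \ref{rem-orden-estrella}(\ref{estrellaproyecciones}) gives $A = P_A B$ (which is one of the two required conclusions) and, taking adjoint and using the EP condition, $A = B P_A$. For $y = Az \in \mathcal{R}(A)$ with $z \in \mathcal{R}(A)$, I get $A^\dagger y = z$ and $y = A z = B P_A z = B z$, so $B^\dagger y = B^\dagger B z = P_{B^*} z = z$, where the last equality uses $z \in \mathcal{R}(A) = \mathcal{R}(A^*) \subseteq \mathcal{R}(B^*)$ (which follows from $A \overset{*}{\leq} B$ together with EP). This delivers $B^\dagger A^\dagger = (A^\dagger)^2 = (AB)^\dagger$.

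For the implication $(\Leftarrow)$, assume $(AB)^\dagger = B^\dagger A^\dagger$ and $A = P_A B$. The reduction above yields $B^\dagger|_{\mathcal{R}(A)} = A^\dagger|_{\mathcal{R}(A)}$, and it remains to prove $A = B P_A$, i.e.\ $B z = A z$ for every $z \in \mathcal{R}(A)$. Given such a $z$, set $y = A z \in \mathcal{R}(A)$; then $B^\dagger y = A^\dagger y = z$, so $B z = B B^\dagger y = P_B y = P_B (A z)$, and consequently $(A - B) z \in \mathcal{R}(B)^\perp$. On the other hand $AB = A^2$ forces $\mathcal{R}(B - A) \subseteq \mathcal{N}(A) = \mathcal{R}(A)^\perp$, so $(B - A) z \in \mathcal{R}(A)^\perp$ too. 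Therefore $(B - A) z$ is orthogonal to both $A z$ and $B z$, hence to their difference $(B - A) z$ itself, so $\|(B - A) z\|^2 = 0$ and $B z = A z$. Combining this with $A = P_A B$ and invoking Proposition \ref{rem-orden-estrella}(\ref{estrellaproyecciones}) yields $A \overset{*}{\leq} B$.

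The step that I expect to be the main obstacle is the orthogonality trick in $(\Leftarrow)$: extracting the missing half $A = B P_A$ of the star order from the apparently weaker identity $B z = P_B (A z)$, by playing it against $\mathcal{R}(B - A) \subseteq \mathcal{R}(A)^\perp$ to force $(B - A) z$ to be perpendicular to itself. Everything else is bookkeeping with the EP identities and the projection characterization of $\overset{*}{\leq}$.
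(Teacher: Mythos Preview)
Your proof is correct and takes a genuinely different route from the paper's argument.

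The paper works through the Greville range criterion (Proposition~\ref{dagger de AB con rg iguales}): for $(\Rightarrow)$ it verifies $\mathcal{R}(A^*AB)\subseteq\mathcal{R}(B)$ and $\mathcal{R}(BB^*A^*)\subseteq\mathcal{R}(A^*)$ directly from the star-order identities $A^*A=A^*B$, $AA^*=BA^*$ together with EP; for $(\Leftarrow)$ it reads the inclusion $\mathcal{R}(BB^*P_A)\subseteq\mathcal{R}(A)$ back out of the reverse order law via the same criterion and deduces $AA^*=P_ABB^*P_A=BB^*P_A=BA^*$. Your argument bypasses Greville entirely: the single observation $A=P_AB\Rightarrow AB=A^2$ (using $AP_A=A$ from EP) collapses the reverse order law to the pointwise identity $B^\dagger|_{\mathcal{R}(A)}=A^\dagger|_{\mathcal{R}(A)}$, which you then handle by hand in both directions. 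The orthogonality trick in $(\Leftarrow)$ --- forcing $(B-A)z\in\mathcal{R}(A)^\perp\cap\mathcal{R}(B)^\perp$ and hence $(B-A)z\perp(B-A)z$ --- is a nice replacement for the range-inclusion machinery. Your approach is more self-contained and arguably more transparent about \emph{why} the EP hypothesis matters (it is exactly what makes $AB=A^2$ and $\mathcal{N}(A)=\mathcal{R}(A)^\perp$ available); the paper's approach, on the other hand, situates the result within the standard reverse-order-law framework and makes the connection to the general Greville conditions explicit.
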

\begin{proof}
	Suppose $A\overset{*}{\leq }B$. Then $\mathcal{R}(A)\subseteq \mathcal{R}(B)$. Now, since $A$ is an EP operator it holds that $\mathcal{R}(A^*)\subseteq \mathcal{R}(B)$ and so $\mathcal{R}(B)^\bot\subseteq \mathcal{N}(A)$. So that, $\mathcal{H}=\mathcal{R}(B)+\mathcal{R}(B)^\bot\subseteq \mathcal{R}(B)+\mathcal{N}(A)$. In consequence, $ \mathcal{R}(B)+\mathcal{N}(A)$ is a closed subspace  and then $\mathcal{R}(AB)$ is closed. Now, observe that $\mathcal{R}(A^*AB) \subseteq \mathcal{R}(A^*)\subseteq \mathcal{R}(B)$.  On the other hand, since $A\overset{*}{\leq }B$ then  it  holds that $A^*A=A^*B$ and $AA^*=BA^*$. So, by the above equalities and by the fact that $\mathcal{R}(A)=\mathcal{R}(A^*)$,  we get that $\mathcal R(BB^*A^*)=\mathcal R(BB^*A)=\mathcal R(BA^*A)=\mathcal{R}(AA^*A)\subseteq \mathcal R(A)=\mathcal R(A^*)$. Hence, by Proposition \ref{dagger de AB con rg iguales}, it holds that $(AB)^\dagger=B^\dagger A ^\dagger$. Equality $A=P_AB$ follows from Proposition \ref{rem-orden-estrella}. Conversely, since $A=P_AB$ then $A^*A=A^*B$. Now,  as $(AB)^\dagger=B^\dagger A^\dagger$ and $A$ is an EP operator  then, by Proposition \ref{dagger de AB con rg iguales},  $\mathcal{R}(BB^*P_A)=\mathcal{R}(BB^*A)\subseteq \mathcal{R}(A)$. Then we get that $AA^*=P_ABB^*P_A=BB^*P_A=BA^*$. Therefore, $A\overset{*}{\leq }B$ and the assertion follows.
\end{proof}

The following result can be found in \cite[Theorem 3.2]{malik2007matrix} for  finite dimensional spaces. Even thought the proof presented by S. Malik can be extended to operators defined on infinite Hilbert spaces, we will prove the next result in a different way.

\begin{pro}\label{ROL-estrella}
	Let $A\in\mathcal{L}(\mathcal{H})$  and $B\in\mathcal{L}^h$ with closed range.   If $A\overset{*}{\leq }B$ then $(AB)^\dagger=B^\dagger A ^\dagger$. 
\end{pro}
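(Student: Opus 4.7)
The strategy is to apply Proposition \ref{dagger de AB con rg iguales}, so the task reduces to verifying its three hypotheses: that $\mathcal{R}(AB)$ is closed, that $\mathcal{R}(A^*AB)\subseteq \mathcal{R}(B)$, and that $\mathcal{R}(BB^*A^*)\subseteq \mathcal{R}(A^*)$. The crucial simplification is that the selfadjointness of $B$ collapses the two defining identities of the star order into a much cleaner symmetric pair involving only $A$ and $A^*$.

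The first step is to exploit $B=B^*$. Taking adjoints in the equations $A^*A=A^*B$ and $AA^*=BA^*$ of Definition \ref{ordenes} and combining with the originals gives the three identities
$$AB=AA^*,\qquad BA=A^*A,\qquad BA^*=AA^*.$$
These will do essentially all the work, since every occurrence of $B$ on either side of $A$ or $A^*$ can be replaced by a product of $A$'s and $A^*$'s.

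Next, I would verify the two range inclusions using these identities. For the first, $A^*AB=A^*(AA^*)=A^*AA^*$, so $\mathcal{R}(A^*AB)\subseteq \mathcal{R}(A^*)$, and since $A\overset{*}{\leq }B$ implies $\mathcal{R}(A^*)\subseteq \mathcal{R}(B^*)=\mathcal{R}(B)$ (by Proposition \ref{caracterizaciones ordenes}(2)), we get $\mathcal{R}(A^*AB)\subseteq \mathcal{R}(B)$. For the second, $BB^*A^*=B(BA^*)=B(AA^*)=(BA)A^*=(A^*A)A^*=A^*AA^*$, so $\mathcal{R}(BB^*A^*)\subseteq \mathcal{R}(A^*)$.

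Finally, $\mathcal{R}(AB)=\mathcal{R}(AA^*)$ is closed (since $A$ has closed range; this is either part of the hypothesis or, if read as applying only to $B$, extracted from the orthogonal decomposition $\mathcal{R}(B)=\mathcal{R}(A)\oplus \mathcal{R}(B-A)$ of Proposition \ref{caracterizaciones ordenes}(2), using that $\mathcal{R}(B)$ is closed and the sum is orthogonal). With the three hypotheses of Proposition \ref{dagger de AB con rg iguales} in hand, the conclusion $(AB)^\dagger=B^\dagger A^\dagger$ follows at once. I do not anticipate any real obstacle; the whole argument is a short adjoint calculation once the Hermitian hypothesis on $B$ is used to rewrite the star order identities.
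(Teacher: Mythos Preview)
Your proof is correct and takes a genuinely different route from the paper's. The paper proceeds via Lemma~\ref{MP-reduced solution}, showing directly that $B^\dagger A^\dagger$ is the Douglas reduced solution of $ABX=P_{AB}$: it establishes $\mathcal{R}(AB)=\mathcal{R}(A)$, checks $ABB^\dagger A^\dagger=P_A$, and verifies $\mathcal{R}(B^\dagger A^\dagger)\subseteq\mathcal{N}(AB)^\perp$ by chasing equalities through Proposition~\ref{rem-orden-estrella}. Your approach, by contrast, extracts the three identities $AB=AA^*$, $BA=A^*A$, $BA^*=AA^*$ from $B=B^*$ and the star order, and then feeds the resulting range inclusions into the Greville criterion of Proposition~\ref{dagger de AB con rg iguales}. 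This is arguably cleaner: once the identities are written down, both required inclusions reduce to $\mathcal{R}(A^*AA^*)\subseteq\mathcal{R}(A^*)$, and the closedness of $\mathcal{R}(AB)=\mathcal{R}(AA^*)$ is immediate. Your argument also yields the bonus observation that $AB=AA^*\in\mathcal{L}^h$ under these hypotheses. The paper's route, on the other hand, illustrates the reduced-solution technique that it reuses elsewhere (Theorem~\ref{AEP-reverseorderlaw}, Proposition~\ref{ROL-pre-orden}), so it fits the surrounding narrative even if it is slightly less direct here.
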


\begin{proof}
Suppose $A\overset{*}{\leq }B$ and $\mathcal{R}(B)$ is closed. As a consequence of \cite[Corollary 3.5]{MR3682701}, it follows that $\mathcal R(A)$ is closed.
	Also note that $\mathcal R(AB)$ is closed. In fact, since $A\overset{*}{\leq }B$ and $B=B^*$ then $\mathcal R(A^*)\subseteq \mathcal R(B)$, so that $\mathcal N(B) \subseteq \mathcal N(A)$ and $\mathcal R(B)+\mathcal N(A)$   is closed since $\mathcal R(B)+\mathcal N(A)=\mathcal N(B)^\perp+\mathcal N(A)\supseteq \mathcal N(B)^\perp+\mathcal N(B)$. Then 
	$\mathcal{R}(AB)$ is closed. 
	Now, we will show  that $B^\dagger A ^\dagger$ is the Douglas' reduced solution of the equation $ABX=P_{AB}$. It is not difficult to see that $\mathcal N(A^*)=\mathcal N(BA^*)$ because $\mathcal R(A^*)\cap \mathcal N(B)=\{0\}.$ Then $\mathcal R (AB)=\mathcal N(BA^*)^\perp=\mathcal R(A)$. Now, observe that $ABB^\dagger A^\dagger =AP_B A^\dagger =AA^\dagger =P_A=P_{AB}$.  Finally,  it holds that $\mathcal R(B^\dagger A ^\dagger)\subseteq \mathcal N(AB)^\perp$. In fact,  $\mathcal R(B^\dagger A ^\dagger)=\mathcal R(B^\dagger A ^*)=\mathcal R((A^*)^\dagger A ^*)=\mathcal{R}(A) =\mathcal{R}(AA^\dagger)= \mathcal{R}(BA^\dagger)=\mathcal{R}(BA^*)= \mathcal{N}(AB)^\perp$; where the second and the fifth equality follow from Proposition \ref{rem-orden-estrella}. Hence $B^\dagger A^\dagger$ is the  Douglas' reduced solution of the equation $ABX=P_{AB}$, and, by Lemma \ref{MP-reduced solution}, it follows that $(AB)^\dagger =B^\dagger A^\dagger$. 
\end{proof}

\begin{rem}
In \cite{malik2007matrix} it is shown that the condition $B\in\mathcal{L}^h$ in the above result can not be replaced by the weaker hypothesis of $B$ being an EP operator.
\end{rem}

\begin{cor}
Consider $A\in\mathcal{L}(\mathcal{H})$  an EP operator and $B\in\mathcal{L}^h$ a closed range operator.   If $A\overset{\sharp}{\leq }B$ then $(AB)^\dagger=B^\dagger A ^\dagger$. 
\end{cor}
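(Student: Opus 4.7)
The plan is to reduce the sharp order hypothesis to a star order hypothesis and then invoke Proposition \ref{ROL-estrella} directly.

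First I would observe that since $B\in\mathcal{L}^h$ has closed range, we have $\mathcal{R}(B)=\mathcal{R}(B^*)$, so $B$ is an EP operator. In particular $B$ is group invertible, which means the hypothesis of item \ref{star-sharp} of Proposition \ref{relationships between orders} (``If $A$ is an EP operator and $B$ is group invertible then $A\overset{*}\leq B$ if and only if $A\overset{\sharp}\leq B$'') is satisfied. Applying this equivalence, the assumption $A\overset{\sharp}{\leq}B$ yields $A\overset{*}{\leq}B$.

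Next, with $A\overset{*}{\leq}B$, $A\in\mathcal{L}(\mathcal{H})$ having closed range (since $A$ is EP), and $B\in\mathcal{L}^h$ with closed range, all the hypotheses of Proposition \ref{ROL-estrella} are in place. Its conclusion gives directly $(AB)^\dagger=B^\dagger A^\dagger$, which is exactly the statement we wish to prove.

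The proof is essentially a two-line chaining of previously established results, so I do not anticipate a genuine obstacle; the only point that must be explicitly verified is that ``$B\in\mathcal{L}^h$ with closed range'' implies ``$B$ is group invertible,'' which is immediate from $\mathcal{R}(B)=\mathcal{R}(B^*)$ together with Theorem \ref{group inverse conditions}. No additional calculation is required.
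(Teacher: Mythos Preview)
Your proof is correct and follows exactly the paper's approach: the paper's proof simply cites ``the above proposition'' (Proposition \ref{ROL-estrella}) and Proposition \ref{relationships between orders}, which is precisely the two-step chaining you describe. Your added observation that $B\in\mathcal{L}^h$ with closed range is EP (hence group invertible) just makes explicit the one verification needed to invoke item \ref{star-sharp} of Proposition \ref{relationships between orders}.
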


\begin{proof}
It follows from the above proposition and Proposition  \ref{relationships between orders}.
\end{proof}

\begin{rem}
Proposition \ref{ROL-estrella} does not hold if the hypothesis $A\overset{*}{\leq }B$ is replaced by $A\overset{-}{\leq }B$. In fact, consider $\mathcal{H}=\mathbb{R}^2$ and $A,B\in\mathcal{L}(\mathcal{H})$ defined by $A=\left(\begin{matrix}
	1 &  0  \\
	1 &  0  
	\end{matrix}\right)$ and $B=\left(\begin{matrix}
	1 &  1  \\
	1 &  0  
	\end{matrix}\right)$. Then it holds that $A\overset{-}{\leq} B$ but $A\overset{*}{\not\leq} B$. Now, $A^\dagger=\left(\begin{matrix}
	1/2 &  1/2  \\
	0 &  0  
	\end{matrix}\right)$, $B^\dagger=\left(\begin{matrix}
	0 &  1  \\
	1 &  -1  
	\end{matrix}\right)$ and $(AB)^\dagger=\left(\begin{matrix}
	1/4 &  1/4  \\
	1/4 &  1/4  
	\end{matrix}\right)$. Then, $(AB)^\dagger\neq B^\dagger A^\dagger$.
\end{rem}

\begin{rem}
Proposition \ref{ROL-estrella} is not true if the hypothesis $A\overset{*}\leq B$ is replaced by $A\overset{\diamond}\leq B$. In fact, consider  $\mathcal{H}=\mathbb{R}^3$ and take $A,B\in\mathcal{L}^h$ defined by  $A=\left(\begin{matrix}
	1 & 1 & 0  \\
	1 & 1 & 0  \\
	0 & 0 & 0
	\end{matrix}\right)$ and  $B=\left(\begin{matrix}
	1 & 1 & 0  \\
	1 & 1 & 1  \\
	0 & 1 & 1
	\end{matrix}\right)$. Then, it holds that $A\overset{\diamond}\leq B$. Now, $A^\dagger=\left(\begin{matrix}
	1/4 & 1/4  & 0 \\
	1/4 & 1/4 & 0  \\
	0 & 0 & 0
	\end{matrix}\right)$,  $B^\dagger=\left(\begin{matrix}
	0 & 1 & -1  \\
	1 & -1 & 1  \\
	-1 & 1 & 0
	\end{matrix}\right)$ and  $(AB)^\dagger=\frac{1}{18}\left(\begin{matrix}
	2& 2 & 0  \\
	2& 2 & 0  \\
	1 & 1 & 0
	\end{matrix}\right)$. Then,  $(AB)^\dagger\neq B^\dagger A^\dagger$.
\end{rem}

\begin{pro}\label{ROL-pre-orden}
Let $A\in\mathcal{L}^h$  and $B\in\mathcal{L}(\mathcal{H})$ with closed ranges such that $B^\dagger AB$ is an $EP$ operator and $A\overset{s}{\leq} B$. Then $(AB)^\dagger=B^\dagger A^\dagger$.
\end{pro}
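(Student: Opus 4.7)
The plan is to show that $B^\dagger A^\dagger$ is the Douglas' reduced solution of $ABX = P_{AB}$, in the same spirit as the proof of Theorem \ref{AEP-reverseorderlaw}, and then invoke Lemma \ref{MP-reduced solution}.

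First I would extract the algebraic consequences of $A = A^*$ and $A\overset{s}{\leq} B$. These give $\mathcal{R}(A) = \mathcal{R}(A^*)\subseteq \mathcal{R}(B)\cap \mathcal{R}(B^*)$, which translates into the identities $A = BB^\dagger A = ABB^\dagger = B^\dagger B A = AB^\dagger B$. From $AP_B = A$ I would then deduce $\mathcal{R}(AB) = A(\mathcal{R}(B)) = A(\mathcal{H}) = \mathcal{R}(A)$; in particular, $\mathcal{R}(AB)$ is closed and $P_{AB} = P_A$.

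The next step is to unpack the hypothesis that $C := B^\dagger AB$ is EP. Using the identities above, $\mathcal{R}(C) = B^\dagger(\mathcal{R}(AB)) = B^\dagger(\mathcal{R}(A))$. On the other hand, since $A^* = A$ and $(B^\dagger)^* = (B^*)^\dagger$, one has $C^* = B^* A (B^*)^\dagger$; moreover $\mathcal{R}((B^*)^\dagger) = \mathcal{R}(B)$, so $\mathcal{R}(C^*) = B^*(A(\mathcal{R}(B))) = B^*(\mathcal{R}(A))$. The EP condition $\mathcal{R}(C) = \mathcal{R}(C^*)$ therefore yields the key identity $B^\dagger(\mathcal{R}(A)) = B^*(\mathcal{R}(A))$.

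Finally I would verify the two conditions that characterize the Douglas reduced solution of $ABX = P_{AB}$ for $X = B^\dagger A^\dagger$. The equation itself follows from $ABB^\dagger A^\dagger = AP_B A^\dagger = AA^\dagger = P_A = P_{AB}$. For the range condition, since $A$ is Hermitian (hence EP), $\mathcal{R}(A^\dagger) = \mathcal{R}(A)$, so $\mathcal{R}(B^\dagger A^\dagger) = B^\dagger(\mathcal{R}(A)) = B^*(\mathcal{R}(A)) = \mathcal{R}(B^*A) = \mathcal{R}((AB)^*) = \mathcal{N}(AB)^\perp$, where the middle equality invokes the key identity. Lemma \ref{MP-reduced solution} then delivers $(AB)^\dagger = B^\dagger A^\dagger$. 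The main obstacle is translating the EP hypothesis on $B^\dagger AB$ into the range equality $B^\dagger(\mathcal{R}(A)) = B^*(\mathcal{R}(A))$; this hinges on the preliminary fact $A(\mathcal{R}(B)) = \mathcal{R}(A)$, after which the remaining verifications are essentially bookkeeping.
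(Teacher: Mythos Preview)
Your proposal is correct and follows essentially the same route as the paper: both show that $B^\dagger A^\dagger$ is the Douglas reduced solution of $ABX=P_{AB}$ by first establishing $\mathcal{R}(AB)=\mathcal{R}(A)$ and $ABB^\dagger A^\dagger=P_A$, and then using the EP hypothesis on $B^\dagger AB$ to push $\mathcal{R}(B^\dagger A^\dagger)$ into $\mathcal{N}(AB)^\perp$. The only cosmetic difference is that you isolate the EP hypothesis as the range identity $B^\dagger(\mathcal{R}(A))=B^*(\mathcal{R}(A))$ and obtain the equality $\mathcal{R}(B^\dagger A^\dagger)=\mathcal{N}(AB)^\perp$, whereas the paper runs the chain $\mathcal{R}(B^\dagger A^\dagger)=\mathcal{R}(B^\dagger AB)=\mathcal{R}(B^*A(B^*)^\dagger)\subseteq\mathcal{R}(B^*A)$ inline and is content with the inclusion.
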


\begin{proof}
Let us see that $B^\dagger A^\dagger $ is the Douglas' reduced solution of $ABX=P_{AB}$. Note that since  $A\in\mathcal{L}^h$ and  $A\overset{s}{\leq} B$ then $\mathcal{R}(AB)$ is closed and $\mathcal{R}(AB)=\mathcal{R}(A)$. Then consider the equation $ABX=P_A$. Observe that, $ABB^\dagger A^\dagger=AP_BA^\dagger =P_A$. In addition, $\mathcal{R}(B^\dagger A^\dagger)=\mathcal{R}(B^\dagger A)=\mathcal{R}(B^\dagger A B)=\mathcal{R}(B^*A(B^*)^\dagger)\subseteq \mathcal{R}(B^*A)\subseteq \mathcal{N}(AB)^\bot$. Then the assertion follows from \cite[Theorem 3.1]{arias2008generalized}.
\end{proof}

\begin{rem}
By Proposition \ref{ROL-estrella} and  \cite[Corollary 4.10]{MR3682701} it holds that  $A\overset{*}{\leq} B$ implies the reverse order law and the dagger sustractivity property. Then  a natural question emerges:  Is there any relationship between these two properties? The following examples show that neither condition implies the other. In fact, 
in $\mathcal{L}(\mathbb{R}^2)$ consider $A=\left(\begin{matrix}
	1 &  0  \\
	0 &  0  
	\end{matrix}\right)$ and $B=\left(\begin{matrix}
	1 &  1  \\
	1 &  -1  
	\end{matrix}\right)$. Then it can be checked that  $(AB)^\dagger=B^\dagger A^\dagger$ but $(B-A)^\dagger\neq B^\dagger- A^\dagger$. On the other hand, if we consider $A=\left(\begin{matrix}
	1 &  0  \\
	1 &  0  
	\end{matrix}\right)$ and $B=\left(\begin{matrix}
	1 &  1  \\
	1 &  -1  
	\end{matrix}\right)$ then $(B-A)^\dagger =B^\dagger - A^\dagger$ but $(AB)^\dagger \neq B^\dagger A^\dagger$.
\end{rem}

If $A, B\in \mathcal{L}(\mathcal{H})$ are  EP operators such that $\mathcal R(A)\subseteq\mathcal R(B)$ then it is not difficult to see that  $B^\dagger A^\dagger\in AB[1,2,3]$.
 In the following two results we show that $B^\dagger A^\dagger$ can be  considered as a weighted generalized inverse of $AB$ for certain weights.

\begin{teo}\label{BdaggerAdagger-wgi}
Suppose that  $A, B\in \mathcal{L}(\mathcal{H})$  are  EP operators such that $\mathcal{R}(A)\subseteq \mathcal{R}(B)$. Then, $B^\dagger A^\dagger$ is an $I,M$-weighted generalized inverse of $AB$ for every $M\in \mathcal{L}^+$ such that $M=M_1+M_2$, where $M_1,M_2\in \mathcal{L}^+$, $\mathcal{R}(M_1)\subseteq \mathcal{R}(B^*A)$ and $\mathcal{R}(M_2)\subseteq \mathcal{N}( AB^\dagger )$.
\end{teo}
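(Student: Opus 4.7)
The plan is to verify the four defining equations of Definition~\ref{MN-weighted} for $X := B^\dagger A^\dagger$ and $T := AB$ under the weights $I$ and $M$. The key toolkit comes from the EP hypothesis: since $A, B$ are EP with closed ranges, $AA^\dagger = A^\dagger A = P_A$ and $BB^\dagger = B^\dagger B = P_B$, and the inclusion $\mathcal{R}(A) \subseteq \mathcal{R}(B) = \mathcal{R}(B^*) = \mathcal{N}(B)^\perp$ delivers $P_B P_A = P_A P_B = P_A$ together with $AP_B = A$, $AP_A = A$, $A^*P_A = A^*$, and $P_A A^\dagger = A^\dagger$. These identities collapse nearly every composition that arises.

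The first three Moore--Penrose equations follow immediately: $TXT = A(BB^\dagger)(A^\dagger A)B = AP_BP_AB = AP_AB = AB$; $XTX = B^\dagger P_A P_B A^\dagger = B^\dagger P_A A^\dagger = B^\dagger A^\dagger$; and $TX = A P_B A^\dagger = AA^\dagger = P_A$, a selfadjoint projection. So $B^\dagger A^\dagger \in T[1,2,3]$ irrespective of $M$.

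The nontrivial step is the weighted equation $(MXT)^* = MXT$, which I would split as $M = M_1 + M_2$. For $M_1$, the EP hypotheses together with $\mathcal{R}(A) \subseteq \mathcal{R}(B)$ guarantee that $\mathcal{R}(B^*A) = B^*\mathcal{R}(A)$ is closed, because $B^*$ restricts to a topological isomorphism of $\mathcal{R}(B) = \mathcal{R}(B^*)$; Douglas' theorem then furnishes $K_1 \in \mathcal{L}(\mathcal{H})$ with $M_1 = B^*AK_1$, and selfadjointness of $M_1$ also gives $M_1 = K_1^*A^*B$. Then
\[
M_1 XT = K_1^* A^* B \, B^\dagger A^\dagger A B = K_1^* A^* P_B P_A B = K_1^* A^* P_A B = K_1^* A^* B = M_1,
\]
using $P_BP_A = P_A$ and $A^*P_A = A^*$; hence $M_1XT = M_1$, which is selfadjoint.

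For $M_2$, the hypothesis $\mathcal{R}(M_2) \subseteq \mathcal{N}(AB^\dagger)$ is equivalent to $AB^\dagger M_2 = 0$; taking adjoints gives $M_2(B^*)^\dagger A^* = 0$, and premultiplying $AB^\dagger M_2 = 0$ by $A^\dagger$ yields $P_A B^\dagger M_2 = 0$, so $B^\dagger M_2$ has range inside $\mathcal{N}(A)$. The aim is to parlay these relations, together with the EP identities $\mathcal{R}(B^\dagger) = \mathcal{R}(B)$, $\mathcal{R}(A^\dagger) = \mathcal{R}(A)$, and the containment $\mathcal{R}(A) \subseteq \mathcal{R}(B)$, into $M_2XT = M_2 B^\dagger P_A B = 0$; added to the $M_1$ case this produces $MXT = M_1$, a selfadjoint operator, completing (4). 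The main obstacle is precisely this last step: the hypothesis controls $AB^\dagger M_2$ (equivalently $M_2(B^*)^\dagger A^*$ after adjoints), whereas $XT$ involves $B^\dagger$ sitting on the opposite side of $P_A$, and bridging that asymmetry is exactly where both EP hypotheses are needed to swap $B^\dagger$ with $(B^*)^\dagger$ on the invariant subspaces that actually participate.
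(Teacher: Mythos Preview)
Your verification of the first three equations and your treatment of $M_1$ are correct; the computation $M_1XT=M_1$ is exactly the content of $\mathcal{R}(M_1)\subseteq\mathcal{R}(Q^*)$ for the idempotent $Q=XT=B^\dagger P_AB$, since $\mathcal{R}(Q^*)=\mathcal{N}(AB)^\perp=\mathcal{R}(B^*A)$. The obstacle you flag for $M_2$, however, is genuine and cannot be removed by the EP hypotheses. What would force $M_2Q=0$ is $\mathcal{R}(M_2)\subseteq\mathcal{R}(Q)^\perp=\mathcal{N}(Q^*)$, and the paper's own proof computes $\mathcal{N}(Q^*)=\mathcal{N}(A(B^*)^\dagger)$, \emph{not} $\mathcal{N}(AB^\dagger)$ as in the printed statement. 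These two nullspaces differ in general even with both operators EP and $\mathcal{R}(A)\subseteq\mathcal{R}(B)$: on $\mathbb{C}^3$ take $A=\mathrm{diag}(1,0,0)$ and $B=\left(\begin{smallmatrix}1&1&0\\0&1&0\\0&0&1\end{smallmatrix}\right)$; then $\mathcal{N}(AB^\dagger)=\mathrm{span}\{(1,1,0),e_3\}$ while $\mathcal{N}(A(B^\dagger)^*)=\mathrm{span}\{e_2,e_3\}$. With $M_1=0$ and $M_2=\left(\begin{smallmatrix}1&1&1\\1&1&1\\1&1&1\end{smallmatrix}\right)$ (so $\mathcal{R}(M_2)=\mathrm{span}\{(1,1,1)\}\subseteq\mathcal{N}(AB^\dagger)$) one gets $Q=\left(\begin{smallmatrix}1&1&0\\0&0&0\\0&0&0\end{smallmatrix}\right)$ and $M_2Q=\left(\begin{smallmatrix}1&1&0\\1&1&0\\1&1&0\end{smallmatrix}\right)$, which is not selfadjoint. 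So the statement as printed is actually false; the hypothesis should read $\mathcal{R}(M_2)\subseteq\mathcal{N}(A(B^*)^\dagger)$, and with that correction your plan finishes at once: $\mathcal{R}(M_2)\subseteq\mathcal{R}(Q)^\perp$ gives $M_2Q=0$, hence $MXT=M_1$ is selfadjoint.

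The paper packages the argument differently: after identifying $\mathcal{R}(Q^*)=\mathcal{R}(B^*A)$ and $\mathcal{N}(Q^*)=\mathcal{N}(A(B^*)^\dagger)$ it invokes \cite[Theorem~5.1]{MR3119120}, which characterizes when $MQ$ is Hermitian for an idempotent $Q$ in terms of exactly such a splitting $M=M_1+M_2$. Your direct Douglas-factorization computation is more elementary and essentially reproves that black box in this instance; the paper's route makes more transparent why the range constraints on $M_1$ and $M_2$ are the natural ones.
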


\begin{proof}
	It is not difficult to see that in this case  $B^\dagger A^\dagger\in AB[1,2,3]$. Observe that $B^\dagger A^\dagger AB=B^\dagger P_AB$ is an idempotent. In addition, $\mathcal{R}((B^\dagger P_AB)^*)=\mathcal{N}(B^\dagger P_AB)^\bot=\mathcal{N}(P_AB)^\bot=\mathcal{R}(B^*P_A)=\mathcal{R}(B^*A)$ (note that $\mathcal{R}((B^*P_A)$ is closed because $\mathcal R(A)+\mathcal N(B^*)$ is closed). Also, it holds that  $\mathcal{N}((B^\dagger P_AB)^*)=\mathcal{N}(B^* P_A(B^\dagger)^*)=\mathcal{N}( P_A(B^\dagger)^*)=\mathcal{R}( (B^\dagger P_A))^\bot=\mathcal{R}( (B^\dagger A))^\bot=\mathcal{R}( (B^\dagger A^*))^\bot=\mathcal{N}( A(B^*)^\dagger )$.
	Then, by \cite[Theorem 5.1]{MR3119120}, it follows that $MBP_AB^\dagger$ is Hermitian for every $M\in\mathcal{L}^+$ such that $M=M_1+M_2$, where $M_1,M_2\in \mathcal{L}^+$, $\mathcal{R}(M_1)\subseteq \mathcal{R}(B^*A)$ and $\mathcal{R}(M_2)\subseteq \mathcal{N}( A(B^*)^\dagger )$. Therefore, $B^\dagger A^\dagger$ is an $I,M$-weighted generalized inverse of $AB$.
\end{proof}

\begin{pro}
	Let $A \in \mathcal L(\mathcal H)$ an EP operator and $ B\in \mathcal{L}^h$ with closed range such that $\mathcal{R}(A)\subseteq \mathcal{R}(B)$. Then $B^\dagger A^\dagger$ is an $I,B^2$-weighted generalized inverse of $AB$. 
\end{pro}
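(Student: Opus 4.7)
The plan is to derive this as a direct consequence of Theorem \ref{BdaggerAdagger-wgi} with the weight $M=B^2$. First I would verify the hypotheses of that theorem: $A$ is EP by assumption, $B$ is EP since $B\in\mathcal{L}^h$ has closed range (so $\mathcal{R}(B)=\mathcal{R}(B^*)$), and the range inclusion $\mathcal{R}(A)\subseteq \mathcal{R}(B)$ is given.

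The crux of the argument is producing a decomposition $B^2=M_1+M_2$ with $M_1,M_2\in\mathcal{L}^+$, $\mathcal{R}(M_1)\subseteq \mathcal{R}(B^*A)$ and $\mathcal{R}(M_2)\subseteq \mathcal{N}(AB^\dagger)$. The natural choice is $M_1:=BP_AB$ and $M_2:=B(I-P_A)B$. These are manifestly Hermitian and positive, being sandwiches of orthogonal projections between copies of $B=B^*$, and their sum is $BP_AB+B(I-P_A)B=B^2$. The range condition for $M_1$ is immediate: $\mathcal{R}(M_1)\subseteq B(\mathcal{R}(P_A))=B(\mathcal{R}(A))\subseteq\mathcal{R}(BA)=\mathcal{R}(B^*A)$, using that $\mathcal{R}(A)$ is closed because $A$ is EP, and $B=B^*$.

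The main point requiring care is the inclusion $\mathcal{R}(M_2)\subseteq \mathcal{N}(AB^\dagger)$. I would establish it by combining three observations: first, $B^\dagger B=P_B$, since $B$ is Hermitian; second, $AP_B=A$, which follows from $\mathcal{N}(B)\subseteq \mathcal{N}(A)$, this in turn obtained by orthogonal complementation from $\mathcal{R}(A)=\mathcal{R}(A^*)\subseteq \mathcal{R}(B)=\mathcal{R}(B^*)$ together with the EP hypothesis $\mathcal{N}(A)=\mathcal{R}(A^*)^\perp$; third, $AP_A=A$, which is standard for EP operators. Chaining these, one computes $AB^\dagger M_2=AB^\dagger B(I-P_A)B=AP_B(I-P_A)B=A(I-P_A)B=(A-AP_A)B=0$. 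With both range conditions verified, Theorem \ref{BdaggerAdagger-wgi} applies and delivers the claim. There is no truly hard step here; the proof is essentially the identification of the correct decomposition of $B^2$.
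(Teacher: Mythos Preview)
Your argument is correct, but it takes a different route from the paper's. The paper proceeds directly: after noting $B^\dagger A^\dagger\in AB[1,2,3]$ (which follows from $A,B$ being EP with $\mathcal{R}(A)\subseteq\mathcal{R}(B)$), it simply computes the fourth weighted equation by hand, namely
\[
B^2\,B^\dagger A^\dagger AB \;=\; B^2 B^\dagger P_A B \;=\; B P_B P_A B \;=\; B P_A B,
\]
which is visibly Hermitian since $B=B^*$. Your approach instead invokes Theorem~\ref{BdaggerAdagger-wgi} and supplies the decomposition $B^2=M_1+M_2$ with $M_1=BP_AB$, $M_2=B(I-P_A)B$, checking the two range conditions. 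Both proofs ultimately pivot on the same operator $BP_AB$: in the paper it is the value of $B^2 B^\dagger A^\dagger AB$, while for you it is the piece $M_1$ of the weight. The paper's argument is shorter and more self-contained, avoiding the appeal to the reference \cite{MR3119120} hidden inside Theorem~\ref{BdaggerAdagger-wgi}; your argument, on the other hand, illustrates that this proposition is genuinely a special case of Theorem~\ref{BdaggerAdagger-wgi}, making the logical dependence explicit. Both are valid and comparable in difficulty; neither involves a hard step.
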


\begin{proof}
	It holds that $ B^\dagger A^\dagger\in AB[1,2,3]$ and note that $B^2\in\mathcal{L}^+$. Then, it only remains to show that $B^2B^\dagger P_AB \in\mathcal{L}^h$. In fact, $B^2B^\dagger P_AB= BP_B P_AB=BP_AB\in\mathcal{L}^h$. Hence, $B^\dagger A^\dagger$ is an $I,B^2$-weighted generalized inverse of $AB$.  
\end{proof}

Recall that if $A$ and $B$ are Hermitian operators with closed ranges such that $AB=BA$ then $B^\dagger A^\dagger$ is the Moore-Penrose inverse of $AB$, see Proposition \ref{conmutan entonces reverse oreder law}. As a consequence of the following lemma, we will state  that $B^\dagger A^\dagger$ is also a weighted inverse of $AB$, if $A$ or $B$ are positive operators.

\begin{lem}
	Consider $T\in \mathcal L(\mathcal H)$ and $ A\in \mathcal L^+$. Then $T^ \dagger $ is an $A$-inverse of $T$ if and only if $A\mathcal R(T)\subseteq \mathcal R(T)$.
\end{lem}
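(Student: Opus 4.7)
The plan is to invoke Proposition \ref{caracterizacion de M-inversas} to rewrite the condition that $T^\dagger$ is an $A$-inverse of $T$ as the single operator equation $T^*AT T^\dagger=T^*A$, and then exploit the fact that $TT^\dagger$ equals the orthogonal projection $P_T$ onto $\mathcal{R}(T)$ (recall that for $T^\dagger$ to belong to $\mathcal{L}(\mathcal{H})$, hence for the notion of $A$-inverse to make sense, $T$ must have closed range).

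First I would substitute $TT^\dagger=P_T$ into the characterization, obtaining the equivalent equation $T^*A\,P_T=T^*A$. Rearranging, this is $T^*A(I-P_T)=0$, i.e.\ $T^*AP_{\mathcal{R}(T)^\perp}=0$. Taking ranges, this is in turn equivalent to
$$A(\mathcal{R}(T)^\perp)\subseteq \mathcal{N}(T^*)=\mathcal{R}(T)^\perp.$$

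The last step is the key observation: since $A\in\mathcal{L}^+$ is selfadjoint and $\mathcal{R}(T)$ is closed, $A$ leaves $\mathcal{R}(T)^\perp$ invariant if and only if $A$ leaves $\mathcal{R}(T)$ invariant. Indeed, assuming $A\mathcal{R}(T)\subseteq\mathcal{R}(T)$ and picking $y\in\mathcal{R}(T)^\perp$, for every $x\in\mathcal{R}(T)$ one has $\langle Ay,x\rangle=\langle y,Ax\rangle=0$, so $Ay\in\mathcal{R}(T)^\perp$; the converse is symmetric. Chaining these equivalences yields $T^\dagger$ is an $A$-inverse of $T$ iff $A\mathcal{R}(T)\subseteq \mathcal{R}(T)$.

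There is no real obstacle here; the only subtlety is remembering that $\mathcal{R}(T)$ being closed is built into the hypothesis (through the existence of the $A$-inverse) so that $P_T$ is a bounded orthogonal projection with $TT^\dagger=P_T$ and $\mathcal{N}(T^*)=\mathcal{R}(T)^\perp$, and using the selfadjointness of $A$ (rather than merely positivity) to pass between invariance of $\mathcal{R}(T)$ and of $\mathcal{R}(T)^\perp$.
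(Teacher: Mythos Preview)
Your proof is correct and follows essentially the same route as the paper: both invoke Proposition~\ref{caracterizacion de M-inversas}, substitute $TT^\dagger=P_T$, and reduce to $T^*A(I-P_T)=0$. The only cosmetic difference is in the last step: the paper passes directly to $\mathcal{R}(T)^\perp\subseteq\mathcal{N}(T^*A)=\mathcal{R}(AT)^\perp$ (using $(T^*A)^*=AT$), which is equivalent to $A\mathcal{R}(T)\subseteq\mathcal{R}(T)$, whereas you first obtain $A\mathcal{R}(T)^\perp\subseteq\mathcal{R}(T)^\perp$ and then use selfadjointness of $A$ to swap to invariance of $\mathcal{R}(T)$.
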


\begin{proof}
	Observe that $T^ \dagger $ is an $A$-inverse of $T$ if and only if $T^*ATT^\dagger=T^*A$, see Proposition \ref{caracterizacion de M-inversas}. Equivalently, $T^*AP_T=T^*A$, or $\mathcal{R}(T)^\perp \subseteq \mathcal N(T^*A)=\mathcal R(AT)^\perp$. Therefore,  the assertion follows. 
\end{proof}

\begin{pro}\label{A inverse of AB}
	Let $A, B\in \mathcal{L}^h$ with closed ranges such $AB=BA$. Then the following assertions hold:
	\begin{enumerate}
		\item If $A\in \mathcal{L}^+$ then $B^\dagger A^\dagger$ is an $A$-inverse of $AB$.
		\item If $B\in \mathcal{L}^+$ then $B^\dagger A^\dagger$ is an $B$-inverse of $AB$.
	\end{enumerate} 
	Moreover, in any of the above cases, the set of $A$-inverses of $AB$ or $B$-inverses of $AB$ is given by
	$$
	\{B^\dagger A^\dagger + Z: Z\in\mathcal{L}(\mathcal{H}) \ and \ \mathcal{R}(Z)\subseteq \mathcal{N}(AB)\}.
	$$
\end{pro}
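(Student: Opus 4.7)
The plan is to combine the lemma that immediately precedes this proposition with the commuting reverse order law of Proposition \ref{conmutan entonces reverse oreder law}. Since $A, B \in \mathcal{L}^h$ with closed ranges and $AB=BA$, Proposition \ref{conmutan entonces reverse oreder law} gives $(AB)^\dagger = B^\dagger A^\dagger$, so to prove items 1 and 2 it suffices to show that $(AB)^\dagger$ is an $A$-inverse (resp. $B$-inverse) of $AB$. By the preceding lemma this reduces to verifying the range invariance $A\,\mathcal{R}(AB) \subseteq \mathcal{R}(AB)$ (resp. $B\,\mathcal{R}(AB) \subseteq \mathcal{R}(AB)$).

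For item 1, the commutativity $AB=BA$ gives directly that $A$ leaves $\mathcal{R}(B)$ invariant: for $y=Bx$, one has $Ay = ABx = BAx \in \mathcal{R}(B)$. Since $\mathcal{R}(AB) = A\mathcal{R}(B)$, it follows that $A\,\mathcal{R}(AB) = A^2\mathcal{R}(B) \subseteq A\mathcal{R}(B) = \mathcal{R}(AB)$, and the preceding lemma together with Proposition \ref{conmutan entonces reverse oreder law} concludes the argument. Item 2 is entirely symmetric, exchanging the roles of $A$ and $B$.

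For the ``moreover'' part I would appeal to Proposition \ref{caracterizacion de M-inversas}: the $A$-inverses of $AB$ are exactly the solutions $X \in \mathcal{L}(\mathcal{H})$ of the linear equation $(AB)^* A (AB) X = (AB)^* A$. Since $B^\dagger A^\dagger$ is a particular solution by item 1, the solution set equals $B^\dagger A^\dagger + \{Z \in \mathcal{L}(\mathcal{H}) : (AB)^* A (AB) Z = 0\}$, and the latter kernel condition is equivalent to $\mathcal{R}(Z) \subseteq \mathcal{N}((AB)^* A (AB))$. The key step is then the identity $\mathcal{N}((AB)^* A (AB)) = \mathcal{N}(AB)$: if $(AB)^* A (AB) x = 0$, taking the inner product with $x$ yields $\|A^{1/2}(AB)x\|^2 = 0$, whence $A \cdot (AB)x = 0$, i.e.\ $A^2 Bx = 0$; since $A$ is Hermitian we have $\mathcal{N}(A^2) = \mathcal{N}(A)$, so $ABx = 0$. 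The reverse inclusion is trivial, and the same reasoning with $B$ in place of $A$ gives the second case.

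I do not anticipate a real obstacle: the only point that requires a little care is using Hermiticity of $A$ to collapse $\mathcal{N}(A^2)$ to $\mathcal{N}(A)$ in the final identification, which is a standard inner-product computation and avoids any appeal to functional calculus or to closedness of $\mathcal{R}(AB)$ beyond what is already guaranteed by the proof of Proposition \ref{conmutan entonces reverse oreder law}.
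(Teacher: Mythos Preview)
Your proposal is correct and follows essentially the same route as the paper: invoke the preceding lemma together with Proposition \ref{conmutan entonces reverse oreder law} for items 1 and 2, and then use Proposition \ref{caracterizacion de M-inversas} to describe the full solution set. The only cosmetic difference is that the paper rewrites the normal equation $(AB)^*A(AB)X=(AB)^*A$ as $BA^3BX=BA^2$ (using $A,B\in\mathcal{L}^h$ and $AB=BA$) and asserts $\mathcal{N}(BA^3B)=\mathcal{N}(A^3B^2)=\mathcal{N}(AB)$ without details, whereas your inner-product argument via $\|A^{1/2}(AB)x\|^2=0$ makes this kernel identification explicit.
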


\begin{proof}
	1. Consider $A\in \mathcal{L}^+$. By the above lemma and  by the fact that $A$ commutes with $B$, it holds that $B^\dagger A^\dagger$ is an $A$-inverse of $AB$. Moreover, by \cite[Proposition 5.9]{MR3085500} all $A$-inverses of $AB$ are the solutions of the operator equation $BA^3BX=BA^2$. Therefore, since $\mathcal{N}(BA^3B)= \mathcal{N}(A^{3}B^2)=\mathcal{N}(AB)$ then all $A$-inverses of $AB$ are  $B^\dagger A^\dagger + Z$ with $Z\in\mathcal{L}(\mathcal{H})$ such that $\mathcal{R}(Z)\subseteq \mathcal{N}(AB)$. 
	
	2. The proof is similar to the proof of item 1. 		
\end{proof}

 We finish by showing that $B^\dagger A^\dagger$ is a solution of certain operator weighted least squares problem related to $AB$ when the weighted norm comes from  a Schatten $p$-norm.  Let $1\leq p <\infty$,  remember that if $T\in\mathcal{L}(\mathcal{H})$ is a compact operator and the sequence $\{\lambda_k(T)\}_{k\in\mathbb{N}}$ denotes the eigenvalues of $|T|$, where each eigenvalue is repeated according its multiplicity, then it is said that $T$ belongs to the Schatten $p$-class $S_p(\mathcal{H})$ if $\sum
\limits_{k\geq 1} \lambda_k^p(T)<\infty$. In addition, if $T\in S_p(\mathcal{H})$ then its $p$-Schatten norm is $\|T\|_p=(\sum
\limits_{k\geq 1} \lambda_k^p(T))^{1/p}$. 

In \cite{ConGirMae17} different relationships between $M$-inverses and minimization problems that involve weighted $p$-Schatten norms were considered. More precisely, in \cite{ConGirMae17}, it is   studied the existence of the minimum of the set $\{\|W^{1/2}(TX-S)\|_p: X\in\mathcal{L}(\mathcal{H})\}$  when $W\in\mathcal{L}^+$ is such that $W^{1/2}\in S_p$ for $1\leq p <\infty$ . Also, it is shown that  the  operators that achieve the minimum (when  the minimum  exists) are the $W$-inverses of $T$ in $\mathcal{R}(S)$.  We will need the following lemma that is disseminated in \cite{ConGirMae17}. Its proof follows applying \cite[Poroposition 5.7]{MR3085500} and  \cite[Proposition 3.3]{ConGirMae17}.

\begin{lem}\label{M inversa implica pSchattenM inversa}
Let $W \in \mathcal L^+, W^{1/2}\in S_p$ and $ T\in \mathcal L(\mathcal{H})$. If $X_0\in \mathcal{L}(\mathcal{H})$ is a $W$-inverse of $T$, then 
	$\|TX_0-I\|_{p,W}=
	\underset{X\in \mathcal L(\mathcal H)}{\min}\|TX-I \|_{p,W}.$
\end{lem}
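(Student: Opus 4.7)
The plan is to combine the two cited results explicitly pointed to in the statement: Proposition 5.7 of \cite{MR3085500}, which characterizes $W$-inverses via a normal equation, and Proposition 3.3 of \cite{ConGirMae17}, which characterizes the minimizers of a weighted $p$-Schatten distance in the same terms. The lemma is then essentially a transitivity of these two characterizations.

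First I would unpack the definition of the weighted quantity, writing
$$\|TX-I\|_{p,W} = \|W^{1/2}(TX-I)\|_p.$$
The hypothesis $W^{1/2}\in S_p(\mathcal{H})$ is the key finiteness requirement: since $S_p(\mathcal{H})$ is a two-sided ideal of $\mathcal{L}(\mathcal{H})$, both $W^{1/2}TX$ and $W^{1/2}$ lie in $S_p(\mathcal{H})$ for every $X\in\mathcal{L}(\mathcal{H})$. Hence $\|TX-I\|_{p,W}$ is well-defined and finite for every candidate $X$, and the infimum on the right-hand side is a bona fide minimization problem over $\mathcal{L}(\mathcal{H})$.

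Next I would invoke Proposition 3.3 of \cite{ConGirMae17} with $S=I$: it says that the operators $X\in\mathcal{L}(\mathcal{H})$ that realize $\min_X\|TX-I\|_{p,W}$ are precisely the solutions of the normal equation
$$T^*WTX = T^*W.$$
On the other hand, Proposition 5.7 of \cite{MR3085500} (the analogue of Proposition \ref{caracterizacion de M-inversas} above) states that $X_0\in\mathcal{L}(\mathcal{H})$ is a $W$-inverse of $T$ if and only if $T^*WTX_0 = T^*W$. Chaining the two equivalences, $X_0$ satisfies the normal equation and therefore attains the minimum, giving
$$\|TX_0-I\|_{p,W} = \min_{X\in\mathcal{L}(\mathcal{H})}\|TX-I\|_{p,W},$$
which is the desired identity.

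Since the lemma is explicitly billed as a splicing of two statements from the literature, I do not expect any substantial technical obstacle. The only point requiring care is verifying that the hypotheses of both cited propositions are indeed met here: the assumption $W^{1/2}\in S_p$ is precisely what makes the weighted $p$-Schatten functional well-defined on all of $\mathcal{L}(\mathcal{H})$ and hence what enables Proposition 3.3 of \cite{ConGirMae17} to be applied, while no extra hypothesis beyond $W\in\mathcal{L}^+$ and $T\in\mathcal{L}(\mathcal{H})$ is needed for the characterization of $W$-inverses via the normal equation. Beyond these verifications the proof is purely formal.
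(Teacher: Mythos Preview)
Your proposal is correct and follows exactly the route the paper indicates: the authors do not give a detailed argument but merely state that the lemma follows by combining \cite[Proposition 5.7]{MR3085500} with \cite[Proposition 3.3]{ConGirMae17}, and your write-up makes this splicing explicit via the normal equation $T^*WTX=T^*W$. There is nothing to add.
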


Consider $A, B\in \mathcal L^h$ with closed ranges. Observe that, by Douglas' theorem,  $ABX=P_A$ has a solution if and only if $\mathcal R(AB)=\mathcal R(A)$. If, in addition, $\mathcal R(A)\subseteq \mathcal R(B)$ it holds that $B^\dagger A^\dagger$ is a solution. 

In the next result we analyze the equation $ABX=P_A$ when it  is not necessarily solvable. 

\begin{teo}\label{BdaggerAdagger-pwproblem}
	Consider $A\in \mathcal L^h$ with closed range,   $W \in S_p$ with  $1\leq p<\infty$ such that $W$ is injective, $WA\in \mathcal{L}^+$ and $ B\in \mathcal{L}^h$ with closed range such that $\mathcal{R}(B)\subseteq \mathcal{R}(A)$. Then there exists
	\begin{equation}\label{pW-inversa}
	\underset{Y\in \mathcal L(\mathcal H)}{\min}\|ABY-P_A \|_{p,WA}.
	\end{equation}
	Moreover, $B^\dagger A^\dagger$ is a solution.
\end{teo}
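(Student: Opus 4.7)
The strategy is to apply Lemma \ref{M inversa implica pSchattenM inversa} with $T=AB$ and weight $M=WA$, after first recasting the objective in the form $\|ABY-I\|_{p,WA}$ to which the lemma directly applies.

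First, I would replace $P_A$ by the identity in the functional. Since $W$ is bounded, $\mathcal N(A)\subseteq \mathcal N(WA)=\mathcal N((WA)^{1/2})$, and $\mathcal R(I-P_A)=\mathcal N(A)$, so $(WA)^{1/2}(I-P_A)=0$. Consequently
\begin{equation*}
(WA)^{1/2}(ABY-P_A)=(WA)^{1/2}(ABY-I)\qquad\text{for every }Y\in\mathcal L(\mathcal H),
\end{equation*}
hence $\|ABY-P_A\|_{p,WA}=\|ABY-I\|_{p,WA}$. Thus the two minimization problems are equivalent, and it suffices to handle the latter.

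Next I would verify that $B^\dagger A^\dagger$ is a $WA$-inverse of $AB$. By Proposition \ref{caracterizacion de M-inversas} this amounts to the identity
\begin{equation*}
(AB)^*(WA)(AB)(B^\dagger A^\dagger)=(AB)^*(WA).
\end{equation*}
Using $A=A^*$, $B=B^*$ and the commutation $AW=WA$ (which follows from $WA\in\mathcal L^+$ with $A,W$ Hermitian), the left-hand side rewrites as $BA^3WP_BA^\dagger$ and the right-hand side as $BA^2W$. Postmultiplying both sides by $A$, and using $A^\dagger A=P_A$ together with $P_BP_A=P_B$ (which is exactly the hypothesis $\mathcal R(B)\subseteq\mathcal R(A)$), reduces the target identity to $BA^3WP_B=BA^3W$, which is then handled by exploiting $AW=WA$, the injectivity of $W$, and the placement of $P_B$ relative to $\mathcal R(A)$.

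Finally, with $B^\dagger A^\dagger$ recognized as a $WA$-inverse of $AB$, Lemma \ref{M inversa implica pSchattenM inversa} applied to $T=AB$ with weight $WA$ yields both the existence of $\min_Y\|ABY-I\|_{p,WA}$ and the fact that $B^\dagger A^\dagger$ attains it. The first step transfers this conclusion to the original problem \eqref{pW-inversa}.

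The principal obstacle is the second step, namely checking the $WA$-inverse identity. The delicate point is that $B$ is not assumed to commute with $W$, so one cannot simply push projections around; the computation must carefully track the positions of $P_B$ and $P_A$ and combine the commutation $AW=WA$ with the range inclusion $\mathcal R(B)\subseteq \mathcal R(A)$ (and the injectivity of $W$, which guarantees $\mathcal N(WA)=\mathcal N(A)$) to absorb the extra factors.
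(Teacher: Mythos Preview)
Your overall strategy coincides with the paper's: reduce to $\min_Y\|ABY-I\|_{p,WA}$ via $(WA)^{1/2}(I-P_A)=0$, verify that $B^\dagger A^\dagger$ satisfies the $WA$-inverse identity $(AB)^*WA(AB)B^\dagger A^\dagger=(AB)^*WA$, and invoke Lemma \ref{M inversa implica pSchattenM inversa}. The first reduction is correct and matches the paper.

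The genuine gap is in the second step. After rewriting the target as $BA^3WP_BA^\dagger=BA^2W$ you postmultiply by $A$ and reduce to $BA^3WP_B=BA^3W$. Even granting that both sides vanish on $\mathcal N(A)$ (so the reduction is legitimate), this last identity is equivalent to $\mathcal R(WA^3B)\subseteq\mathcal R(B)$, and nothing in the hypotheses forces $\mathcal R(B)$ to be invariant under $WA^3$; the ingredients you list ($AW=WA$, $W$ injective, $\mathcal R(B)\subseteq\mathcal R(A)$) do not yield it. Concretely, take $\mathcal H=\mathbb R^2$, $W=I$, $A=\begin{pmatrix}1&0\\0&2\end{pmatrix}$, $B=\begin{pmatrix}1&1\\1&1\end{pmatrix}$: all hypotheses hold, yet $BA^3WP_B=\tfrac{1}{2}\begin{pmatrix}9&9\\9&9\end{pmatrix}\neq\begin{pmatrix}1&8\\1&8\end{pmatrix}=BA^3W$, and one checks directly that $(AB)^*WA(AB)B^\dagger A^\dagger\neq(AB)^*WA$ and that $B^\dagger A^\dagger$ does \emph{not} minimize $\|ABY-P_A\|_{2,WA}$. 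The paper's own argument has the very same lacuna at the step ``$BAWAAP_BA^\dagger=BAWAP_A$''. The computation \emph{does} go through cleanly if the range inclusion is reversed to $\mathcal R(A)\subseteq\mathcal R(B)$ (as in the paragraph immediately preceding the theorem), because then $AP_B=A$ and $BAWAAP_BA^\dagger=BAWAAA^\dagger=BAWAP_A$ directly; this strongly suggests the stated hypothesis is a typo, and your sketch cannot be completed as written.
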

\begin{proof}
	Observe that $
	(AB)^*WA(AB)B^\dagger A^\dagger=BAWAAP_BA^\dagger=BAWAP_A=(AB)^*WA$. Then, by Proposition \ref{caracterizacion de M-inversas},  it follows that $B^\dagger A^\dagger$ is a $WA$-inverse of $AB$. Then, by Lemma \ref{M inversa implica pSchattenM inversa}, $B^\dagger A^\dagger$ solves the problem $
	\underset{Y\in \mathcal L(\mathcal H)}{\min}\|ABY-I \|_{p,WA}.$
	But, $\|ABY-I \|_{p,WA}=\|(WA)^{1/2}(ABY-I)\|_{p}=\| (WA)^{1/2}(ABY-P_A)\|_{p}=\| ABY-P_A\|_{p, WA}.
	$
	So that $B^\dagger A^\dagger$ is a solution for (\ref{pW-inversa}).
\end{proof}

\begin{rem}
Under the hypotheses of Theorem \ref{BdaggerAdagger-pwproblem} it holds that $B^\dagger A^\dagger$ is an $WA$-inverse of $AB$ in $\mathcal{R}(A)$ (see  definition of this class of weighted inverses in \cite{ConGirMae17}).
\end{rem}

\textbf{Aknowlegment.} We would like to thank  Santiago Muro for fruitful conversations that help us improve the article.

\end{document}